\theoremstyle{theorem}
\newtheorem{theorem}{Theorem}[section]
\newtheorem{lemma}[theorem]{Lemma}
\newtheorem{proposition}[theorem]{Proposition}
\newtheorem{corollary}[theorem]{Corollary}
\newtheorem{conjecture}[theorem]{Conjecture}
\theoremstyle{definition}
\newtheorem{definition}[theorem]{Definition}
\newtheorem{remark}[theorem]{Remark}
\newtheorem{example}[theorem]{Example}
\providecommand{\abs}[1]{\left\lvert#1\right\rvert}
\providecommand{\OP}[1]{\operatorname{#1}}
\begin{document}
\author{Igor Uljarevic}
\title{Floer homology of automorphisms of Liouville domains}
\maketitle
\begin{center}\today \end{center}
\begin{abstract}
We introduce a combination of fixed point Floer homology and symplectic homology for Liouville domains. As an application, we detect non-trivial elements in the symplectic mapping class group of a Liouville domain.
\end{abstract}

\newcommand{\A}{\mathcal{A}}
\renewcommand{\P}{\mathcal{P}}
\newcommand{\M}{\mathcal{M}}
\newcommand{\N}{\mathcal{N}}
\newcommand{\T}{\tau} % fibered Dehn twist
\newcommand{\D}{\operatorname{Diff}}
\newcommand{\irm}{\OP{irm}} % the ideal restriction map
\newcommand{\slospec}{\mathcal{S}}
\renewcommand{\hat}{\widehat}
\newcommand{\hcl}{\theta}

\section{Introduction}

The aim of this paper is to introduce Floer homology for symplectomorphism of a Liouville domain. The Floer homology group is associated to a Liouville domain $(W,\lambda)$, an exact symplectomorphism $\phi:W\to W$ (see Definition~\ref{def:exactsymp}), that is equal to the identity near the boundary, and an element $-\infty<a\leqslant \infty,$ called the slope. One should think of the groups $HF_\ast(\phi,a)$ as an extension to the Liouville domains of the Floer homology groups in~\cite{MR1297130}. On the other hand, these groups also generalize symplectic homology \cite{MR1254813,MR1306027,MR1312580,MR1408861,MR1726235}, in that
\[SH(W;\mathbb{Z}_2)=HF(\OP{id,\infty}).\]
The construction of our Floer Homology groups relies on a new variant of the standard action functional,  which is adapted to the setting of twisted loops associated to exact symplectomorphisms (see section~\ref{subsec:action}).
The Floer homology groups $HF_\ast(\phi,a)$ are invariant under compactly supported isotopy of $\phi.$ This makes them an effective tool for studying the symplectic mapping class group, i.e. the group of isotopy classes of symplectomorphisms of $(W,d\lambda)$ which are equal to the identity near the boundary. An important class of such symplectomorphisms are so called fibered Dehn twists of Liouville domains with periodic Reeb flow on the boundary (see Definition~\ref{FibDTwist} below).

\begin{theorem}\label{thm:SlopePeriodicity}
Let $(W,\lambda)$ be a Liouville domain with connected boundary such that the Reeb  flow of $(\partial W,\left.\lambda\right|_{\partial W})$ is 1-periodic. Let $a\in\mathbb{R}$ be a real number that is not a period of a periodic Reeb orbit of $(\partial W,\left.\lambda\right|_{\partial W}).$ If the fibered Dehn twist represents a class of order $\ell\in \mathbb{N}$ in $\pi_0\OP{Symp}_c(W,d\lambda),$ then
\begin{equation}\label{eq:SlopePeriodicity}
HF(\OP{id},a)\cong HF(\OP{id},a+\ell).
\end{equation}
\end{theorem}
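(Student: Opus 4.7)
The strategy is to reduce the statement to a slope-$1$ shift formula and then iterate. The fibered Dehn twist $\tau$ is compactly supported and can be written as $\phi_K^1$ for an autonomous Hamiltonian $K$ that on the conical end $[1,\infty)\times\partial W$ equals the radial coordinate $r$; the $1$-periodicity of the Reeb flow is exactly what forces this time-$1$ map to equal the identity on that end. The plan is to first establish, for every exact symplectomorphism $\phi$ equal to the identity near $\partial W$ and every $a$ that is not a Reeb period,
\[HF(\phi,a)\cong HF(\phi\circ\tau,a+1).\]
Iterating $\ell$ times starting from $\phi=\OP{id}$ gives
\[HF(\OP{id},a)\cong HF(\tau,a+1)\cong HF(\tau^2,a+2)\cong\cdots\cong HF(\tau^\ell,a+\ell).\]
The order hypothesis means $\tau^\ell$ is connected to $\OP{id}$ by a compactly supported symplectic isotopy, so the isotopy invariance of $HF$ stated in the introduction delivers $HF(\tau^\ell,a+\ell)\cong HF(\OP{id},a+\ell)$. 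Since $1$-periodicity of the Reeb flow forces all Reeb periods to be positive integers, the non-period hypothesis on $a$ automatically propagates to every $a+k$ with $k\in\mathbb{Z}$, so each intermediate Floer group is defined.

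For the shift isomorphism itself, the mechanism is a conjugation by the flow of $K$. Given an admissible Hamiltonian $H$ of slope $a+1$ and a twisted orbit $\gamma$ of $\phi\circ\tau$ for $H$, set $\tilde\gamma(t)=\phi_K^{-t}(\gamma(t))$. Then $\tilde\gamma(1)=\phi(\tilde\gamma(0))$, and $\tilde\gamma$ satisfies the Hamiltonian equation for the (time-dependent) Hamiltonian $H\circ\phi_K^t-K$. On the conical end $H$ and $K$ are both functions of $r$ alone, hence they Poisson-commute and the conjugated Hamiltonian reduces to $H-K$, which is admissible of slope $a$. The same change of variables turns Floer cylinders for $(\phi\circ\tau,H)$ into Floer cylinders for $(\phi,H\circ\phi_K^t-K)$, so the bijection of generators upgrades to a chain isomorphism and induces the desired isomorphism in homology.

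The main obstacle is controlling what happens in the interior transition region, where $K$ is no longer the radial coordinate and can fail to Poisson-commute with $H$, and where the conjugated Hamiltonian is only time-dependent. The non-period assumption on $a$ is what rules out spurious $1$-orbits appearing on the level sets between slopes $a$ and $a+1$, and it is the standard input for the maximum-principle argument confining Floer cylinders to a compact subset of $\hat W$. Once this compactness is in hand, verifying that the conjugation intertwines the Floer differentials is a direct computation in the twisted-loop action functional from Section~\ref{subsec:action}, and the iteration above concludes the proof.
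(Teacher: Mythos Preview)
Your approach is essentially the paper's: the paper likewise uses the naturality isomorphism $\N(K)$ (Proposition~\ref{prop:nat}) to get $HF(\OP{id},a+\ell)\cong HF(\tau^\ell,a)$ and then invokes isotopy invariance (Theorem~\ref{IsoInv}) to identify $HF(\tau^\ell,a)$ with $HF(\OP{id},a)$. The only cosmetic difference is that the paper applies naturality once with a Hamiltonian of slope $\ell$ generating $\tau^\ell$ directly, rather than iterating a slope-$1$ shift $\ell$ times; this avoids the intermediate groups $HF(\tau^k,a+k)$ but is otherwise the same argument. One small point: the paper also invokes Lemma~\ref{thm:HomEquiv} to pass from triviality of $[\tau^\ell]$ in $\pi_0\OP{Symp}_c(W,d\lambda)$ to triviality in $\pi_0\OP{Symp}_c(W,\lambda/d)$, which is the group for which isotopy invariance is stated.

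Your final paragraph misidentifies the obstacle. The conjugation $\gamma\mapsto(\psi^K_\cdot)^{-1}\circ\gamma$ is a \emph{tautological} bijection between generators and between Floer cylinders (Lemma~\ref{thm:Nat}); there is no separate compactness issue, no intermediate slopes between $a$ and $a+1$ to worry about, and no need for the conjugated Hamiltonian to behave well in the transition region beyond having the correct linear slope at infinity. The admissibility of $a$ is used only so that both Floer groups are defined, not to rule out ``spurious orbits'' during the conjugation. (Also, $1$-periodicity of the Reeb flow does not force all Reeb periods to be integers---think of lens spaces---but it does imply the set of periods is invariant under integer shifts, which is all you need.)
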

In fact, we will prove a more general result, which provides necessary conditions for a loop of contactomorphisms of $\partial W$ to induce an element in $\pi_0\OP{Symp}_c(W,d\lambda)$ of certain (prescribed) order (see Theorem~\ref{thm:SlopePeriodicityGen}).
\begin{corollary}\label{thm:DTInfOrd}
Let $(W,\lambda)$ be as in Theorem~\ref{thm:SlopePeriodicity}. If
\begin{equation}\label{eq:dim<dim}
\dim H(W;\mathbb{Z}_2)< \dim SH(W;\mathbb{Z}_2),
\end{equation}
then the fibered Dehn twist represents a class of infinite order in $\pi_0\OP{Symp}_c(W,d\lambda).$ Here, $\dim H(W;\mathbb{Z}_2)$ stands for the sum of Betti numbers rather than the dimension of the homology group of a particular degree.  
\end{corollary}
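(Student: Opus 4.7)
The plan is to argue by contradiction using Theorem~\ref{thm:SlopePeriodicity}. Suppose the fibered Dehn twist represents a class of finite order $\ell\in\mathbb{N}$ in $\pi_0\OP{Symp}_c(W,d\lambda)$. Then the theorem supplies an isomorphism
\[HF(\OP{id},a)\cong HF(\OP{id},a+\ell)\]
for every real slope $a$ outside the discrete set of Reeb periods. The strategy is to combine this periodicity with a ``small-slope'' identification to bound $\dim HF(\OP{id},a)$ uniformly by $\dim H(W;\mathbb{Z}_2)$, and then to derive a contradiction by passing to the limit $a\to\infty$, where one recovers $SH(W;\mathbb{Z}_2)$.

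First I would establish the small-slope base case: for $a$ strictly less than the smallest positive Reeb period, $HF(\OP{id},a)$ can be computed using a $C^2$-small time-independent Hamiltonian whose only $1$-periodic orbits are constants at critical points. This is the usual Morse-theoretic identification underlying the construction of symplectic homology, and one should expect it to be established in the construction section of the paper; it gives $HF(\OP{id},a)\cong H_\ast(W;\mathbb{Z}_2)$ in that range.

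Second, I would iterate the periodicity isomorphism downward: starting from an arbitrary non-period slope $a$, subtracting a sufficiently large multiple of $\ell$ lands one in the small-slope regime, and hence $\dim HF(\OP{id},a)=\dim H(W;\mathbb{Z}_2)$ for every such $a$. Finally, since $SH(W;\mathbb{Z}_2)=HF(\OP{id},\infty)$ is the direct limit of $HF(\OP{id},a)$ over increasing slopes through continuation maps, and the direct limit of a directed system of vector spaces with uniformly bounded dimension has dimension bounded by that same bound, one concludes
\[\dim SH(W;\mathbb{Z}_2)\leqslant \dim H(W;\mathbb{Z}_2),\]
contradicting hypothesis~(\ref{eq:dim<dim}).

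The main obstacle is simply to verify that the two auxiliary ingredients — the small-slope identification with $H_\ast(W;\mathbb{Z}_2)$, and the direct-limit presentation of $SH(W;\mathbb{Z}_2)$ via the groups $HF(\OP{id},a)$ — are available in the precise form required by this argument within the setup the paper has built. Once these are in place, the corollary follows from Theorem~\ref{thm:SlopePeriodicity} essentially by a dimension count.
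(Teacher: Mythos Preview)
Your proposal is correct and follows essentially the same route as the paper. The only cosmetic differences are that the paper takes the reference slope to be negative (Remark~\ref{rmk:KnownSlopes} gives $HF(\OP{id},-\varepsilon)\cong H(W;\mathbb{Z}_2)$, whereas small \emph{positive} slope yields $H(W,M;\mathbb{Z}_2)$, which has the same total dimension anyway), and that instead of invoking a general bound on direct limits of bounded-dimensional vector spaces, the paper unpacks this step explicitly by lifting $\dim H(W;\mathbb{Z}_2)+1$ independent classes from $SH(W;\mathbb{Z}_2)$ and pushing them by continuation into a single $HF(\OP{id},k\ell-\varepsilon)$.
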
 

Special cases of Corollary~\ref{thm:DTInfOrd} include the squares of the Dehn-Seidel twists on $T^\ast\mathbb{S}^n$ and their extensions to the cotangent bundles of other symmetric spaces (see Corollary 4.5 in \cite{MR1765826}).

\begin{definition}
The homology $H_\ast(X;\mathbb{Z}_2)$ of a topological space $X$ is said to be \textbf{symmetric} if there exists $k\in \mathbb{Z}$ such that
\[H_{k-j}(X;\mathbb{Z}_2)\cong H_j(X;\mathbb{Z}_2)\]
for all $j\in\mathbb{Z}.$
\end{definition}

\begin{corollary}\label{cor:DTNonTrivC0}
Let $(W,\lambda)$ be as in Theorem~\ref{thm:SlopePeriodicity}. Assume that the Reeb flow of $(\partial W, \left.\lambda\right|_{\partial W})$ induces a free circle action on $\partial W,$  and that the first Chern class $c_1(W)$ of $(W,d\lambda)$ vanishes. If the homology $H_\ast(W;\mathbb{Z}_2)$ is not symmetric, then the fibered Dehn twist represents a nontrivial class in $\pi_0\OP{Symp}_c(W,d\lambda).$
\end{corollary}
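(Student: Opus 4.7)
The plan is to argue by contradiction: assume the fibered Dehn twist were trivial in $\pi_0\OP{Symp}_c(W, d\lambda)$, so that it represents a class of order $\ell = 1$. Applying Theorem~\ref{thm:SlopePeriodicity} then yields, for every $a \in \mathbb{R}$ that is not a Reeb period,
\[ HF(\OP{id}, a) \cong HF(\OP{id}, a+1). \]
Because $c_1(W) = 0$ and the Reeb flow on $\partial W$ is a free $S^1$-action, the groups $HF_*(\OP{id}, a)$ carry a canonical $\mathbb{Z}$-grading via a well-defined Conley--Zehnder index, and the above isomorphism should promote to a graded one
\[ HF_*(\OP{id}, a) \cong HF_{*+s}(\OP{id}, a+1), \]
where $s \in \mathbb{Z}$ is a slope-independent degree shift determined by the Maslov index of the contact $S^1$-orbit.

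Next, I would compute the two sides of slope $0$ topologically. Since the minimal Reeb period is $1$, for $|a| < 1$ every $1$-periodic orbit of an admissible Hamiltonian is an interior constant, and a standard Morse-theoretic argument gives, with $2n = \dim W$,
\[ HF_*(\OP{id}, \epsilon) \cong H_{*+n}(W, \partial W; \mathbb{Z}_2), \qquad HF_*(\OP{id}, -\epsilon) \cong H_{*+n}(W; \mathbb{Z}_2), \]
for small $\epsilon > 0$. The distinction between the two arises from the sign of the slope at infinity, which flips the Morse boundary convention between relative and absolute homology.

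Feeding these into the graded periodicity with $a = -\epsilon$ (so that $a$ and $a+1$ are non-period slopes on opposite sides of $0$) gives a graded isomorphism
\[ H_{*+n}(W; \mathbb{Z}_2) \cong H_{*+s+n}(W, \partial W; \mathbb{Z}_2). \]
Applying Poincar\'e--Lefschetz duality $H_k(W, \partial W; \mathbb{Z}_2) \cong H_{2n-k}(W; \mathbb{Z}_2)$ rearranges this to
\[ H_j(W; \mathbb{Z}_2) \cong H_{(2n-s)-j}(W; \mathbb{Z}_2) \quad \text{for every } j \in \mathbb{Z}, \]
which is precisely the symmetry of $H_*(W;\mathbb{Z}_2)$ about degree $(2n-s)/2$, contradicting the hypothesis.

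The main obstacle is the first step: verifying that the slope-shift isomorphism of Theorem~\ref{thm:SlopePeriodicity} respects the canonical $\mathbb{Z}$-grading, so that a slope shift by $1$ corresponds to a globally constant degree shift $s$. I expect this to reduce to a naturality/continuation argument combined with an index computation for the contact $S^1$-orbit, for which the assumptions $c_1(W) = 0$ and freeness of the action are used decisively. Once that is in place, the identification of $HF_*(\OP{id}, \pm \epsilon)$ with (relative respectively absolute) singular homology and the invocation of Poincar\'e--Lefschetz duality are standard.
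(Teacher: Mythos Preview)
Your proposal is correct and follows essentially the same route as the paper (which deduces the corollary from Corollary~\ref{thm:DTNonTriv}, where the argument is exactly: Theorem~\ref{thm:SlopePeriodicity} at $a=-\varepsilon$, the identifications $HF_\ast(\OP{id},\pm\varepsilon)\cong H_{\ast+n}(W,\partial W;\mathbb{Z}_2)$ resp.\ $H_{\ast+n}(W;\mathbb{Z}_2)$, and Poincar\'e--Lefschetz duality). Regarding the obstacle you flag: the paper does not attempt to compute an explicit global shift $s$ from a Maslov index; instead it uses that the naturality isomorphism $\N(K)$ of Lemma~\ref{thm:Nat} preserves the \emph{relative} Maslov index, which already forces the isomorphism to be of the form $HF_\ast\cong HF_{\ast+c}$ for some (unspecified) constant $c\in\mathbb{Z}$ once $c_1(W)=0$ makes the grading $\mathbb{Z}$-valued---this is all that is needed. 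One point you skip but the paper treats: a priori the isomorphism could send the contractible component $o\in\pi_0\Omega_{\OP{id}}$ to some other $o'$; the paper rules this out by observing that $HF_\ast(\OP{id},1-\varepsilon,o')=0$ for $o'\neq o$ while $H_\ast(W;\mathbb{Z}_2)\neq 0$.
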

The corollary applies when $W$ is a smooth degree $d\geqslant 2$ projective hypersurface in $\mathbb{C}P^m, m>3$ with a neighbourhood of a smooth hyperplane section removed (see Example~\ref{HyperplaneSection}). We will also prove a version of the corollary without the assumption on $c_1$ (see Corollary~\ref{thm:DTNonTriv}).

For $a>0$ small, the group $HF_\ast(\phi,a)$ has already appeared in the work of Seidel \cite{MR1882336} and  McLean \cite{MR2960024}.
After the first version of this paper was written, the paper~\cite{Seidel2014} by Seidel was posted on the ArXiv. That paper has an overlap with the current paper in that it independently introduces a version of fixed point Floer homology for exact symplectomorphisms that is very similar to the theory developed in the present paper.

The paper is organized as follows. Section~2 introduces the general set-up and defines the Floer homology groups. Section~3 is devoted to Theorem~\ref{thm:SlopePeriodicity} and its applications. Section~4 defines the Floer homology for more general slopes and proves an extension of Theorem~\ref{thm:SlopePeriodicity} to a more general setting (Theorem~\ref{thm:SlopePeriodicityGen}).

\subsection{Notation and conventions}\label{sec:Conv}
Throughout, $(W,\lambda)$ is a $2n$-dimensional Liouville domain with connected boundary $M:=\partial W,$ symplectic form $\omega:=d\lambda,$ and induced contact form $\beta:=\left.\lambda\right|_{M}$ on the boundary. The associated Reeb vector field is denoted by $R^\beta,$ the Reeb flow by $\sigma^\beta_t,$ and the contact distribution by $\xi^\beta.$
By definition of Liouville domain, the Liouville vector field $X_\lambda$ (defined by $X_\lambda\lrcorner d\lambda=\lambda $) points out on the boundary. It can be used to identify a neighbourhood of the boundary with the product $M\times (0,1]$ with the Liouville form $r\beta$ (where $r$ is real coordinate).

The completion $\hat{W}$ is obtained by 
\[\hat{W}:=W\sqcup M\times (0,\infty)/\sim,\]
where $\sim$ stands for the above mentioned identification. With a slight abuse of notation, the Liouville form on $\hat{W}$ is also denoted by $\lambda.$ We identify $W$ and $M\times (0,\infty)$ with the appropriate regions in $\hat{W}.$ We say that a statement is true in $\hat{W}$ ``near infinity'' if it holds in the complement of a compact subset.

A function $H:\hat{W}\to \mathbb{R}$ and its Hamiltonian vector field $X_H$ are related by $dH= \omega\left(X_H,\cdot\right).$
The Hamiltonian isotopy associated to a time-dependent Hamiltonian function $H:\mathbb{R}\times\hat{W}\to \mathbb{R}$ will be denoted by $\psi_t^H.$ It is defined by $\partial_t \psi_t^H= X_{H_t}\circ \psi^H_t, \psi^H_0=\OP{id},$ where $H_t:=H(t,\cdot).$ An almost complex structure $J$ on $\hat{W}$ is $\omega$-compatible if $\omega(\cdot,J\cdot)$ is a Riemannian metric.

We denote by $\OP{Symp}_c(W,\omega)$ the group of symplectomorphisms of $W$ compactly supported in the interior of $W,$ and by $\OP{Cont}(M,\xi^\beta)$ the group of co-orientation preserving contactmomorphisms of $M.$
\subsection*{Acknowledgments}
This work will be part of my PhD thesis. I would like to thank Paul Biran and Dietmar Salamon for their guidance and for their enormous help in improving the exposition of this paper. I am grateful to the referee for reading the paper carefully and for suggesting that $HF(\OP{id},a)$ might be enough for the main applications. This work was supported by the Swiss National Science Foundation (grant number 200021\_134747).

\section{Floer homology for symplectomorphism of a Liouville domain}\label{sec:HF}
\subsection{Floer data and Admissible data}

\begin{definition}\label{def:exactsymp}
A symplectomorphism $\phi:\hat{W}\to\hat{W}$ is called \textbf{exact} if the 1-form $\phi^\ast\lambda-\lambda$ is exact and compactly supported. Under these assumptions there exists a unique compactly supported function $F_\phi:\hat{W}\to \mathbb{R}$ such that 
\[\phi^\ast\lambda-\lambda=dF_\phi.\]
The exact symplectomorphisms form a group, denoted by $\OP{Symp}(\hat{W},\lambda/d ).$ The functions associated to composition and inverse are given by
\begin{equation}
\begin{split}
F_{\phi_0\circ\phi_1}&=F_{\phi_0}\circ\phi_1 +F_{\phi_1},\\
F_{\phi^{-1}}&=-F_{\phi}\circ \phi^{-1}.
\end{split}
\end{equation}
We define the subgroup $\OP{Symp}_c(W,\lambda/d)$ of $\OP{Symp}(\hat{W},\lambda/d)$ by
\begin{equation*}
\OP{Symp}_c(W,\lambda/d):=\left\lbrace \phi\in \OP{Symp}(\hat{W},\lambda/d)\:|\: \OP{supp}\phi\subset W-M\right\rbrace.
\end{equation*}
\end{definition}
An exact symplectomorphism is not required to be compactly supported.
Every Hamiltonian symplectomorphism generated by a compactly supported Hamiltonian is exact. However, an exact symplectomorphism need not be Hamiltonian or even isotopic to the identity.

\begin{definition}\label{def:slospec} 
A real number $a$ is called \textbf{admissible} if it is not a period of a Reeb orbit of $(M,\beta)$. 
\end{definition}

\begin{definition}
Let $\phi\in\OP{Symp}_c(W,\lambda/d)$. \textbf{Floer data} for $\phi$ is a pair $(H,J)$ of a (time-dependent) Hamiltonian $H_t:\hat{W}\to\mathbb{R}$ and a family $J_t$ of $\omega$-compatible almost complex structures on $\hat{W}$ satisfying the following conditions. $H$ and $J$ are twisted by $\phi,$ i.e.
\begin{align}
H_{t+1}&=H_t\circ\phi,\label{HamTwist}\\
J_{t+1}&=\phi^\ast J_t. \label{JTwist}
\end{align}
In addition, the conditions below hold near infinity
\begin{align}
&H_t(x,r)= ar,\label{HamLin}\\
&J_t(x,r)\xi^\beta=\xi^\beta,\label{JCondInfComp}\\
&J_t(x,r)\partial_r=R^\beta,\label{JCondInfDel}
\end{align}  
where $a\in\mathbb{R}$ is admissible. If we want to specify the slope, we say ``$(H,J)$ is Floer data for $(\phi,a).$''
\end{definition}
\begin{remark}\label{rmk:Compatibility}
Condition~\eqref{JCondInfComp} together with $\omega$-compatibility of $J_t$ implies that $\left.J_t\right|_{\xi^\beta}$ is a compatible complex structure on the symplectic vector bundle $(\xi^\beta,d\beta)\to M.$
\end{remark}
\begin{definition}\label{def:RegFlDat}
Let $\phi\in \OP{Symp}_c(W,\lambda/d)$. Floer data $(H,J)$ for $\phi$ is \textbf{regular} if $H$ is nondegenerate with respect to $\phi,$ i.e.
\[\det\left( d(\phi\circ\psi^H_1)(x)-\OP{id}\right)\not=0,\]
for all fixed points $x$ of $\phi\circ\psi^H_1,$ and if the linearized operator for every solution of the Floer equation~\eqref{FloerEq} is surjective.
\end{definition}

\subsection{Action functional}\label{subsec:action}
\begin{definition}\label{def:action}
Let $\phi:\hat{W}\to\hat{W}$ be an exact symplectomorphism and let $H_t:\hat{W}\to\mathbb{R}$ be a Hamiltonian satisfying \eqref{HamTwist}. The \textbf{action functional} $\A_{\phi,H}$ is a function defined on the \textbf{twisted loop space }
\[\Omega_\phi:=\{\gamma:\mathbb{R}\to \widehat{W}\:|\:\phi(\gamma(t+1))=\gamma(t)\}\]
by
\[\A_{\phi,H}(\gamma):=-\int_0^1\left(\gamma^\ast\lambda +H_t(\gamma(t))dt\right)-F_\phi(\gamma(1)).\]
\end{definition}
\begin{lemma}
Let $\phi$ and $H_t$ be as in Definition~\ref{def:action}. The critical points of the action functional $\A_{\phi,H}$ are Hamiltonian twisted loops, i.e. the elements of the set
\[\P_{\phi,H}:=\left\lbrace \gamma\in \Omega_\phi\:|\:\dot{\gamma}=X_{H_t}\circ\gamma \right\rbrace.\]
\end{lemma}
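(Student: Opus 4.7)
The plan is to compute the first variation of $\A_{\phi,H}$ along a smooth family in $\Omega_\phi$, verify that all boundary contributions cancel because of the twisted condition combined with the defining property of $F_\phi$, and then read off the Hamiltonian equation from non-degeneracy of $\omega$.

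First I would identify the tangent space to $\Omega_\phi$ at $\gamma$. Given a smooth family $\{\gamma_s\}\subset \Omega_\phi$ with $\gamma_0=\gamma$, set $\xi(t):=\partial_s|_{s=0}\gamma_s(t)$. Differentiating the constraint $\phi(\gamma_s(t+1)) = \gamma_s(t)$ at $s=0$ yields the twisted condition
\[d\phi_{\gamma(t+1)}\,\xi(t+1)=\xi(t).\]
Setting $t=0$ and using $\phi(\gamma(1))=\gamma(0)$ gives the key identity $d\phi_{\gamma(1)}\xi(1)=\xi(0)$.

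Next I would differentiate each of the three pieces of $\A_{\phi,H}(\gamma_s)$. Stokes' theorem applied to the map $(s,t)\mapsto \gamma_s(t)$ on $[0,\epsilon]\times[0,1]$ produces
\[\frac{d}{ds}\bigg|_{0}\int_0^1\gamma_s^\ast\lambda = \int_0^1\omega(\xi,\dot\gamma)\,dt + \lambda(\xi(1))-\lambda(\xi(0)),\]
the Hamiltonian term yields
\[\frac{d}{ds}\bigg|_{0}\int_0^1 H_t(\gamma_s(t))\,dt = \int_0^1 \omega(X_{H_t},\xi)\,dt\]
via $dH_t=\omega(X_{H_t},\cdot)$, and $dF_\phi=\phi^\ast\lambda-\lambda$ together with the identity $d\phi_{\gamma(1)}\xi(1)=\xi(0)$ gives
\[\frac{d}{ds}\bigg|_{0} F_\phi(\gamma_s(1)) = \lambda_{\phi(\gamma(1))}(d\phi\,\xi(1)) - \lambda(\xi(1)) = \lambda(\xi(0))-\lambda(\xi(1)).\]

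Assembling these three computations with the signs prescribed in Definition~\ref{def:action}, the four boundary contributions cancel in pairs, leaving
\[d\A_{\phi,H}(\gamma)\cdot \xi = \int_0^1\omega(\xi,\,X_{H_t}\circ\gamma-\dot\gamma)\,dt.\]
To extract the critical point equation I would test against any compactly supported $\xi$ on $(0,1)$: such variations automatically satisfy the twist condition (both boundary values vanish) and therefore lie in the tangent space, so non-degeneracy of $\omega$ forces $\dot\gamma=X_{H_t}\circ\gamma$ on $(0,1)$; the $\phi$-equivariance of this equation, which follows from $H_{t+1}=H_t\circ\phi$ together with $\phi^\ast\omega=\omega$, then propagates it to all of $\mathbb{R}$. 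The only genuinely delicate step is the cancellation of the boundary terms, and this is precisely where the correction term $F_\phi$ earns its place in the definition: the identity $dF_\phi=\phi^\ast\lambda-\lambda$ is exactly what is needed to absorb the non-closed contribution $\lambda(\xi)\big|_0^1$ produced by the twisted loop.
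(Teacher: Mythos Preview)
Your proof is correct and follows essentially the same route as the paper: compute the first variation of each of the three terms, observe that the boundary contributions $\lambda(\xi(1))-\lambda(\xi(0))$ from the $\lambda$-term and $\lambda(\xi(0))-\lambda(\xi(1))$ from the $F_\phi$-term cancel via $dF_\phi=\phi^\ast\lambda-\lambda$ together with $d\phi\,\xi(1)=\xi(0)$, and read off the Hamiltonian equation from nondegeneracy of $\omega$. The only cosmetic differences are that the paper phrases the variation of $\int\gamma^\ast\lambda$ via the pointwise identity $\left.\frac{d}{ds}\right|_{0}\gamma_s^\ast\lambda=\omega(\xi,\dot\gamma)\,dt+\tfrac{d}{dt}(\lambda(\xi))\,dt$ rather than Stokes, and concludes directly from ``for all $\zeta\in T_\gamma\Omega_\phi$'' without your extra (but perfectly valid) step of testing against compactly supported variations and then propagating by $\phi$-equivariance.
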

\begin{proof}
Let $\gamma\in\Omega_\phi$ and let $\zeta\in T_\gamma\Omega_\phi.$ This means $\zeta$ is a section of the vector bundle $\gamma^\ast T\hat{W}$ such that
\[d\phi(\zeta(t+1))=\zeta(t).\]
The derivative of $\A_{\phi,H}$ at the point $\gamma$ in the direction $\zeta$ is given by
\[d\A_{\phi,H}(\gamma)\zeta=\left.\frac{d}{ds}\right|_{s=0}\A_{\phi,H}(\gamma_s),\]
where $\gamma_s$ is a smooth family of twisted loops satisfying
\[\left.\frac{d}{ds}\right|_{s=0}\gamma_s(t)=\zeta(t).\]
Since
\[\left.\frac{d}{ds}\right|_{s=0}\left(\gamma_s^\ast\lambda\right)= d\lambda\left( \zeta(t),\dot{\gamma}(t)\right)dt+ \frac{d}{dt}(\lambda(\zeta(t)))dt,\]
we get
\begin{equation}\label{eq:critpt}
\begin{split}
d\A_{\phi,H}(\gamma)\zeta&= -\int_0^1\left( \omega(\zeta(t),\dot{\gamma}(t)) + \frac{d}{dt}(\lambda(\zeta(t))) + dH_t(\zeta(t))\right)dt -dF_\phi(\zeta(1))\\
&=-\int_0^1 \omega\left(\zeta(t),\dot{\gamma}(t)-X_{H_t}(\gamma(t))\right)dt -\lambda(\zeta(1))+ \lambda(\zeta(0))- \phi^\ast\lambda(\zeta(1))+\lambda(\zeta(1))\\
&=\int_0^1\omega\left(\dot{\gamma}(t)-X_{H_t}(\gamma(t)),\zeta(t)\right)dt.
\end{split}
\end{equation}
In the last equality, we used the identity
\[d\phi(\zeta(1))=\zeta(0).\]
Since \eqref{eq:critpt} holds for all $\zeta\in T_\gamma\Omega_\phi,$ we have $d\A_{\phi,H}(\gamma)=0$ if and only if  $\dot{\gamma}(t)=X_{H_t}(\gamma(t))$ for all $t.$
\end{proof}

If $J_t$ is a family of almost complex structures on $\hat{W},$ that is $\omega$-compatible and satisfies~\eqref{JTwist}, then we can define a Riemannian metric on $\Omega_\phi$ by
\[\left\langle \xi,\zeta\right\rangle :=\int_0^1 \omega(\xi(t),J_t\circ\gamma(t)\zeta(t))dt,\quad\gamma\in\Omega_\phi,\:\xi,\zeta\in T_\gamma\Omega_\phi.\]

The negative gradient flow lines of $\A_{\phi,H}$ with respect to this inner product are solutions $u:\mathbb{R}^2\to\widehat{W}$ of the Floer equation
\begin{equation}\label{FloerEq}
\partial_s u+ J_t(u)\left( \partial_t u-X_{H_t}(u) \right)=0.
\end{equation}
satisfying the following periodicity condition
\begin{equation}\label{eq:percond}
\phi\circ u(s,t+1)= u(s,t).
\end{equation}
\subsection{Grading}
\begin{definition}\label{RMasl}
Let $\phi:\hat{W}\to\hat{W}$ be a symplectomorphism. Let $H^i,i=0,1$ be two Hamiltonians nondegenerate with respect to $\phi$ and satisfying~\eqref{HamTwist}. And let  $\gamma_i\in\P(\phi,H^i),i=0,1$ be twisted loops representing the same class in $\pi_0\Omega_\phi.$ Consider homotopy
\[[0,1]\times \mathbb{R}\to \hat{W}\]
between them through $\Omega_\phi.$ Write it as a map
\[u:\mathbb{R}\times\mathbb{R}\to \hat{W}\]
such that
\begin{align}
u(s,t)&=\phi\circ u(s,t+1),\\
u(s,t)&=\left\lbrace\begin{matrix}\gamma_0(t)& \text{for }s\leqslant 0\\ \gamma_1(t)& \text{for }s\geqslant 1. \end{matrix} \right.
\end{align}
We choose a symplectic trivialization $T_{s,t}:\mathbb{R}^{2n} \to T_{u(s,t)}\widehat{W}$ satisfying
\[\phi_\ast\circ T_{s,t+1}=T_{s, t}.\]
The \textbf{relative Maslov index} of $u$ is the number
\[\mu(u)=\mu_{CZ}(\Psi^1)-\mu_{CZ}(\Psi^0)\in\mathbb{Z},\]
where 
\[\Psi^i:[0,1]\to Sp(2n)\::\:t\mapsto T^{-1}_{i,t}\circ d\psi^{H^i}_t(\gamma_i(0)) T_{i,0},\quad\text{for }i=0,1. \]
It does not depend on the choice of trivialization.
\end{definition}
We need the notion of a mapping torus in order to define the relative Maslov index for the pair $(\gamma_0,\gamma_1)$ (independent of the choice of homotopy $u$).
\begin{definition}
Let $\phi:\hat{W}\to\hat{W}$ be a symplectomorphism. The \textbf{mapping torus} of $\phi$ is the fibration $p_\phi:\hat{W}_\phi\to\mathbb{S}^1$ defined by
\[\hat{W}_\phi:=\frac{\widehat{W}\times \mathbb{R}}{(\phi(x),t)\sim (x,t+1)},\quad p_\phi([x,t]):=[t]\in\mathbb{R}/\mathbb{Z}.\]
\end{definition}
Denote the vertical tangent bundle of $\hat{W}_\phi$ by
\[T^V\hat{W}:=\ker dp_\phi\to \hat{W}_\phi.\]
The symplectic form $\omega$ on $\hat{W}$ makes $T^V\hat{W}$ into a symplectic vector bundle.
A loop in $\Omega_\phi,$ seen as a map
\[u:\mathbb{R}/\mathbb{Z}\times\mathbb{R}\to\hat{W}\]
satisfying $\phi\circ u(s,t+1)=u(s,t),$ can be identified with the map
\[\mathbb{R}/\mathbb{Z}\times\mathbb{R}/\mathbb{Z} \to\hat{W}_\phi\::\:(s,t)\mapsto [(u(s,t),t)].\]
Let $\hcl\in\pi_0\Omega_\phi.$ Associated to it is the \textbf{minimal Chern number} $N_\hcl.$ It is defined as the minimal positive generator of the group
\[N_\hcl \mathbb{Z}:=\left\langle c_1(T^V\hat{W}),\pi_1(\Omega_\phi,\hcl)\right\rangle\subset \mathbb{Z}. \label{eq:ChernNum}\]
We define the minimal positive generator of trivial group to be $\infty,$ and $\mathbb{Z}_\infty:= \mathbb{Z}.$

\begin{definition}
The \textbf{relative Maslov index} for a pair $(\gamma_0,\gamma_1)\in\P(\phi,H^0) \times \P(\phi,H^1)$ of twisted loops representing the same class $\hcl\in\pi_0\Omega_\phi$ is the number
\[\mu(\gamma_0,\gamma_1):= \mu(u)\pmod{2N_\hcl}\in \mathbb{Z}_{2N_\hcl},\]
where $u,H^0,H^1$ are as in Definition~ \ref{RMasl}. 
\end{definition}
\subsection{Taming infinity}
\begin{definition}\label{def:ContData}
Let $\phi\in\OP{Symp}_c(W,\lambda/d),$ and let $(H^-,J^-)$ and $(H^+,J^+)$ be regular Floer data for $\phi.$ \textbf{Continuation data} from $(H^-,J^-)$ to $(H^+,J^+)$ consists of Hamiltonians $H_{s,t}, G_{s,t}: \hat{W}\to\mathbb{R}$ and a family of almost complex structures $J_{s,t}$ on $\hat{W}$ such that the following conditions are satisfied. For each $s\in\mathbb{R},$ $H_s,G_s$ and $J_s$ are twisted by $\phi,$ i.e.
\[H_{s,t}\circ\phi=H_{s,t+1},\quad G_{s,t}\circ\phi=G_{s,t+1},\quad \phi^\ast J_{s,t}=J_{s,t+1}.\] In addition, there exists $r_0\in (0,\infty)$ such that $G_{s,t}(x,r)=0, H_{s,t}(x,r)=a(s,t)$ and the conditions~\eqref{JCondInfComp}  and \eqref{JCondInfDel} hold for $J_s(x,r),$ for all $s,t\in\mathbb{R}, r\geqslant r_0, x\in M$ and some smooth function $a:\mathbb{R}^2\to \mathbb{R}$ that is increasing with respect to $s$ (i.e. $\partial_s a(s,t)\geqslant 0$). Finally, 
\[(H_{s,t},G_{s,t},J_{s,t})=(H_{t}^\pm,0,J_{t}^\pm),\quad\text{for }\pm s>>0.\]
\end{definition}
\begin{remark}
We can find continuation data from $(H^-,J^-)$ to $(H^+,J^+)$ whenever the slope of $H^-$ is less or equal to the slope of $H^+.$
\end{remark}
\begin{definition}
Let $\phi, H_{s,t}, G_{s,t}$ and $J_{s,t}$ be as in Definition~\ref{def:ContData}. The \textbf{energy} of a solution $u:\mathbb{R}^2\to\widehat{W}$ of the $s$-dependent (generalized) Floer equation
\begin{equation}\label{FloerEqS} 
\partial_s u-X_{G_{s,t}}(u)+ J_{s,t}(u)\left( \partial_t u-X_{H_{s,t}}(u) \right)=0
\end{equation} 
satisfying periodicity condition~\eqref{eq:percond} is defined to be
\[E(u):=\int_{-\infty} ^\infty \int_0^1 \abs{\partial_s u-X_{G_{s,t}}(u)}_J^2dtds.\]
\end{definition}
\begin{lemma}\label{EId}
Let $\phi, H^\pm, J^\pm, H_s,G_s,J_s$ be as in Definition~\ref{def:ContData}, let $\gamma^\pm\in \P(\phi,H^\pm)$ and let $u:\mathbb{R}^2\to \widehat{W}$ be a solution of \eqref{FloerEqS}, \eqref{eq:percond} such that $\lim_{s\to\pm\infty} u(s,t)= \gamma^\pm(t).$ Then
\begin{equation}
\begin{split}
E(u)=&\A_{\phi,H^-}(\gamma^-)- \A_{\phi,H^+}(\gamma^+)-\int_{-\infty} ^{\infty} \int_0^1 (\partial_sH_{s,t})(u)dtds\\
&+ \int_{-\infty} ^{\infty} \int_0^1 (\partial_tG_{s,t})(u)dtds + \int_{-\infty} ^{\infty} \int_0^1 \{G_{s,t}, H_{s,t}\}(u)dtds.
\end{split}
\end{equation}
Here, $\{G_{s,t}, H_{s,t}\}:= \omega(X_{G_{s,t}}, X_{H_{s,t}})$ stands for the Poisson bracket.
\end{lemma}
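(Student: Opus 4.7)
The plan is to rewrite $E(u)$ as an integral of $\omega(\partial_s u - X_{G_{s,t}}, \partial_t u - X_{H_{s,t}})$ using $\omega$-compatibility of $J_{s,t}$ together with the Floer equation~\eqref{FloerEqS}, and then to expand and identify each of the resulting four pieces. Writing $|v|_J^2 = \omega(v, J_{s,t}v)$ and substituting $J_{s,t}(\partial_s u - X_{G_{s,t}}(u)) = -(\partial_t u - X_{H_{s,t}}(u))$, the integrand becomes
\[
\omega(\partial_s u,\partial_t u)-\omega(\partial_s u,X_{H_{s,t}}(u))-\omega(X_{G_{s,t}}(u),\partial_t u)+\{G_{s,t},H_{s,t}\}(u).
\]

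The first piece is $u^\ast d\lambda$, and I would handle it via Stokes' theorem on the rectangle $[-R,R]\times[0,1]$ and take $R\to\infty$. The vertical boundaries at $s=\pm R$ tend to $\int_0^1(\gamma^\pm)^\ast\lambda$ because $\partial_s u\to 0$ uniformly. The horizontal boundaries at $t=0,1$ would ordinarily cancel, but the twisted periodicity $u(s,1)=\phi^{-1}(u(s,0))$ combined with $(\phi^{-1})^\ast\lambda=\lambda+dF_{\phi^{-1}}$ and $F_{\phi^{-1}}=-F_\phi\circ\phi^{-1}$ yields
\[
\lambda(\partial_s u(s,1))-\lambda(\partial_s u(s,0))=\tfrac{d}{ds}F_{\phi^{-1}}(u(s,0))=-\tfrac{d}{ds}F_\phi(\gamma^\pm(1))\big|_{\text{asym.}},
\]
so the $F_\phi$-contributions produced by Stokes combine with the $(\gamma^\pm)^\ast\lambda$-integrals to reproduce exactly $\A_{\phi,H^-}(\gamma^-)-\A_{\phi,H^+}(\gamma^+)$, up to the leftover $\int_0^1 H_t^\pm(\gamma^\pm)\,dt$ pieces hidden in the definition of $\A_{\phi,H^\pm}$. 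This Stokes step with the twisted boundary is the main obstacle; everything else is bookkeeping.

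The two mixed terms are rewritten via the identities $\omega(\partial_s u,X_{H_{s,t}}(u))=-dH_{s,t}(\partial_s u)=-\tfrac{d}{ds}H_{s,t}(u)+\partial_s H_{s,t}(u)$ and $\omega(X_{G_{s,t}}(u),\partial_t u)=dG_{s,t}(\partial_t u)=\tfrac{d}{dt}G_{s,t}(u)-\partial_t G_{s,t}(u)$. The $s$-derivative of $H$ integrates to $\int_0^1 H^+_t(\gamma^+)-H^-_t(\gamma^-)\,dt$, which cancels the Hamiltonian leftover from the previous paragraph. The $t$-derivative of $G$ integrates over $t\in[0,1]$ to $G_{s,1}(u(s,1))-G_{s,0}(u(s,0))$, which vanishes because the twisting $G_{s,t+1}=G_{s,t}\circ\phi$ and $u(s,1)=\phi^{-1}(u(s,0))$ give $G_{s,1}(u(s,1))=G_{s,0}(u(s,0))$; thus only the $\int\!\int\partial_t G_{s,t}(u)$ term survives.

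Finally, collecting all pieces yields
\[
E(u)=\A_{\phi,H^-}(\gamma^-)-\A_{\phi,H^+}(\gamma^+)-\int_{-\infty}^\infty\!\!\int_0^1\!\partial_s H_{s,t}(u)\,dt\,ds+\int_{-\infty}^\infty\!\!\int_0^1\!\partial_t G_{s,t}(u)\,dt\,ds+\int_{-\infty}^\infty\!\!\int_0^1\!\{G_{s,t},H_{s,t}\}(u)\,dt\,ds,
\]
as claimed. The convergence of all asymptotic boundary terms is justified by the exponential convergence $u(s,\cdot)\to\gamma^\pm$ that follows from regularity of the Floer data, combined with the fact that $H_{s,t}$ and $G_{s,t}$ are $s$-independent outside a compact interval.
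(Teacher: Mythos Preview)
Your proposal is correct and follows essentially the same route as the paper: both use the Floer equation and $\omega$-compatibility to rewrite $E(u)$ as $\int\!\int \omega(\partial_s u - X_G,\partial_t u - X_H)$, expand into four terms, treat the $\omega(\partial_s u,\partial_t u)$ piece by Stokes on $[-R,R]\times[0,1]$ using the twisted periodicity to produce the $F_\phi$-contribution, and handle the $H$- and $G$-terms by the product rule together with the twist $G_{s,t+1}=G_{s,t}\circ\phi$. The only cosmetic difference is that the paper applies $\phi^\ast\lambda-\lambda=dF_\phi$ directly at $t=1$ (via $d\phi(\partial_s u(s,1))=\partial_s u(s,0)$) rather than passing through $(\phi^{-1})^\ast\lambda$ and $F_{\phi^{-1}}$ as you do.
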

\begin{proof}
\begin{equation}\label{EIdEq:1}
\begin{split}
E(u)=& \int_{-\infty}^{+\infty} \int_{0}^{1} \omega\left(\partial_s u- X_{G_{s,t}}(u),J_{s,t}(u)(\partial_s u-X_{G_{s,t}})\right)dtds\\
=&\int_{-\infty}^{+\infty} \int_{0}^{1} \omega\left(\partial_s u- X_{G_{s,t}}(u),\partial_t u-X_{H_{s,t}}(u)\right)dtds\\
=& \int_{-\infty}^{+\infty} \int_{0}^{1} \omega\left(\partial_s u,\partial_t u\right)dtds+ \int_{-\infty}^{+\infty} \int_{0}^{1} dH_{s,t}(\partial_s u)dtds -\\
& - \int_{-\infty}^{+\infty} \int_{0}^{1} dG_{s,t}(\partial_t u)dtds + \int_{-\infty}^{+\infty} \int_{0}^{1} \{G_{s,t}, H_{s,t}\}(u)dtds.
\end{split}
\end{equation}
We denote the first three terms in the last line of~\eqref{EIdEq:1} by $A$, $B$, and $C$, respectively. By applying the Stokes theorem and using $\phi\circ u(s,1)=u(s,0),$ we get
\begin{equation}
\begin{split}
A&= \int_0^1\left( \gamma^+\right)^\ast \lambda- \int_{-\infty}^{+\infty} \lambda(\partial_s u(s,1))ds- \int_0^1\left( \gamma^-\right)^\ast \lambda + \int_{-\infty}^{+\infty} \lambda(\partial_s u(s,0))ds\\
&=\int_0^1\left( \gamma^+ \right)^\ast \lambda- \int_0^1\left( \gamma^-\right)^\ast \lambda + \int_{-\infty}^{+\infty} \left(\phi^\ast\lambda- \lambda \right)(\partial_su(s,1))ds\\
&= \int_0^1\left( \gamma^+ \right)^\ast \lambda- \int_0^1\left( \gamma^-\right)^\ast \lambda + F_\phi\circ\gamma^+(1)- F_\phi\circ\gamma^-(1),
\end{split}
\end{equation} 
\begin{equation}
\begin{split}
B&= \int_{-\infty}^{+\infty} \int_0^1 \partial_s \left(H_{s,t}(u)\right) - (\partial_sH_{s,t})(u) dtds\\
&=\int_0^1H_{s,t}\circ \gamma^+(t)dt - \int_0^1H_{s,t}\circ \gamma^-(t)dt- \int_{-\infty}^{+\infty} \int_0^1 (\partial_sH_{s,t})(u) dtds.
\end{split}
\end{equation}
\begin{equation}
\begin{split}
C=& \int_{-\infty}^{+\infty} \int_0^1 \partial_t(G_{s,t}(u))-\partial_t G_{s,t}(u) dtds\\
=& \int_{-\infty}^{+\infty} G_{s,1}(u(s,1))- G_{s,0}(u(s,0)) ds - \int_{-\infty}^{+\infty} \int_0^1\partial_t G_{s,t}(u) dtds\\
=& -\int_{-\infty}^{+\infty} \int_0^1\partial_t G_{s,t}(u) dtds.
\end{split}
\end{equation}
The lemma follows by substituting $A$, $B$, and $C$ in \eqref{EIdEq:1}.
\end{proof}
\begin{lemma}\label{thm:maxprinclin}
Let $H_{s,t}, G_{s,t}, J_{s,t}$ and $r_0$ be as in Definition~\ref{def:ContData}, and let $u:U\to M\times (r_0,\infty)$ be a solution of \eqref{FloerEqS}, where $U\subset\mathbb{R}^2$ is open and connected. Then $r\circ u$ has no local maxima unless it is constant.
\end{lemma}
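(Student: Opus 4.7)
The plan is to show that $\rho := r \circ u$ is subharmonic on $U$, i.e.\ $\Delta\rho \geq 0$ with $\Delta = \partial_s^2 + \partial_t^2$, and then invoke the strong maximum principle for subharmonic functions, which rules out interior local maxima on a connected open set unless the function is constant. In the cylindrical end $M\times(r_0,\infty)$, Definition~\ref{def:ContData} forces $X_{G_{s,t}}=0$ and $X_{H_{s,t}}$ to be a multiple of $R^\beta$, so the generalized Floer equation~\eqref{FloerEqS} reduces there to a standard $s$-dependent Floer equation.

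To compute $\Delta\rho$, I would write $u=(v,\rho)$ with $v\colon U\to M$ and decompose tangent vectors to $M\times(r_0,\infty)$ using the splitting $\mathbb{R}\partial_r\oplus\mathbb{R} R^\beta\oplus\xi^\beta$. The relations $J\partial_r=R^\beta$ (from \eqref{JCondInfDel}), $JR^\beta=-\partial_r$ (from $J^2=-\mathrm{id}$), and $J(\xi^\beta)=\xi^\beta$ (from \eqref{JCondInfComp}) let one project the Floer equation onto each summand. The $\partial_r$- and $R^\beta$-components yield
\[\partial_s\rho=\beta(\partial_t v)+a(s,t),\qquad \partial_t\rho=-\beta(\partial_s v),\]
where $a(s,t)$ is the scalar with $\partial_s a\geq 0$ coming from Definition~\ref{def:ContData}; the $\xi^\beta$-component gives $(\partial_s v)^\xi=-J_{s,t}(\partial_t v)^\xi$, with $(\cdot)^\xi$ denoting projection onto $\xi^\beta$ along $R^\beta$.

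Differentiating the first pair of equations and subtracting, the mixed partials cancel and I recognise what remains as $v^*d\beta(\partial_s,\partial_t)$, yielding
\[\Delta\rho=\partial_s a+v^*d\beta(\partial_s,\partial_t).\]
The first term is non-negative by hypothesis. Since $d\beta$ annihilates $R^\beta$, only the $\xi^\beta$-components of $\partial_s v$ and $\partial_t v$ contribute to the second term; substituting the $\xi^\beta$-component of the Floer equation and invoking Remark~\ref{rmk:Compatibility} (so that $J_{s,t}|_{\xi^\beta}$ is $d\beta$-compatible) gives
\[v^*d\beta(\partial_s,\partial_t)=d\beta\bigl((\partial_t v)^\xi,\,J_{s,t}(\partial_t v)^\xi\bigr)\geq 0.\]
Hence $\Delta\rho\geq 0$ on $U$, and the strong maximum principle finishes the argument.

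The main thing to keep straight is the sign bookkeeping: the convention $dH=\omega(X_H,\cdot)$ combined with $\omega=dr\wedge\beta+r\,d\beta$ fixes the sign of $X_{H_{s,t}}$, and this must match the sign coming from $J\partial_r=R^\beta$ so that the contribution of the $\partial_r$-component of the Floer equation produces $+\partial_s a$ (rather than $-\partial_s a$) in $\Delta\rho$. Once these signs are consistent, the remainder is a standard subharmonicity calculation, with Remark~\ref{rmk:Compatibility} doing the essential geometric work in the $\xi^\beta$-direction.
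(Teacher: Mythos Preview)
Your proposal is correct and is essentially the same argument the paper gives. The paper in fact proves a more general maximum principle (Lemma~\ref{thm:maxprinc}) for slopes of the form $H_{s,t}(x,r)=r\,h_{s,t}(x)$ with $dh_{s,t}(R^\beta)=0$, and then deduces Lemma~\ref{thm:maxprinclin} as the special case $h_{s,t}(x)=a(s,t)$; the computation there---splitting along $\langle\partial_r\rangle\oplus\langle R^\beta\rangle\oplus\xi^\beta$, extracting the scalar equations \eqref{MaxPrinEq1}--\eqref{MaxPrinEq3}, and showing $\Delta g\geq\partial_s h_{s,t}$ via the $d\beta$-compatibility of $J|_{\xi^\beta}$---matches your derivation line for line.
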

\begin{proof}
See page~\pageref{pf:maxprinclin}.
\end{proof}

\subsection{Chain complex}
\begin{theorem}\label{Moduli}
Let $\phi\in\OP{Symp}_c(W,\lambda/d),$ let $(H,J)$ be regular Floer data for $\phi$ and let $\gamma^-,\gamma^+\in \P(\phi,H).$ Then the set 
\[\M(\phi,H,J,\gamma^-,\gamma^+):= \left\lbrace u:\mathbb{R}^2\to\hat{W}\:|\:\eqref{FloerEq}\:\&\: \eqref{eq:percond}\:\& \lim_{s\to\pm\infty}u(s,t)= \gamma^\pm(t) \right\rbrace\]
is a manifold (possibly empty or with connected components of varying dimensions). The connected component containing $u_0$ has dimension equal to $\mu(u_0).$ The quotient of the $k-$dimensional part $\M^k(\cdots)$ of $\M(\cdots)$ by the natural $\mathbb{R}-$action is denoted by $\widehat{\M}^k(\cdots).$ 
\end{theorem}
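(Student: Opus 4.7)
The plan is to adapt standard Floer moduli-space theory to the $\phi$-twisted setting. First I would pass to the mapping torus $\hat{W}_\phi$: the twist conditions \eqref{HamTwist}--\eqref{JTwist} make $H_t$ and $J_t$ descend to a time-dependent Hamiltonian and almost complex structure on the fibration $p_\phi\colon\hat{W}_\phi\to\mathbb{S}^1$, and a solution of \eqref{FloerEq}--\eqref{eq:percond} corresponds to a map $\mathbb{R}\times\mathbb{R}/\mathbb{Z}\to\hat{W}_\phi$ covering the identity on the $\mathbb{R}/\mathbb{Z}$ factor. Under this identification the asymptotic limits $\gamma^\pm$ become sections of $p_\phi$. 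Fixing a relative homotopy class, I would choose $p>2$ and take as ambient Banach manifold the space of $W^{1,p}_{\mathrm{loc}}$ maps with the prescribed asymptotic limits and with finite exponentially weighted Sobolev norm near $s=\pm\infty$, so that the Floer equation \eqref{FloerEq} becomes a smooth section of a Banach bundle over this space.

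Next, I would linearize at a solution $u$ to obtain a Cauchy--Riemann type operator $D_u$. Using a symplectic trivialization of $u^\ast T^V\hat{W}$ compatible with the twist (the analogue of the trivializations $T_{s,t}$ used in Definition~\ref{RMasl}), the asymptotic operators at $s=\pm\infty$ become self-adjoint operators on $L^2(\mathbb{R}/\mathbb{Z},\mathbb{R}^{2n})$ whose spectra avoid zero, since $H$ is nondegenerate with respect to $\phi$ (Definition~\ref{def:RegFlDat}). The standard Lockhart--McOwen / Floer--Hofer Fredholm package then shows that $D_u$ is Fredholm, and a spectral-flow computation identifies its index with $\mu_{CZ}(\Psi^+)-\mu_{CZ}(\Psi^-)$, which by Definition~\ref{RMasl} is exactly $\mu(u)$. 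Surjectivity of $D_u$ is the regularity hypothesis on $(H,J)$, so the implicit function theorem endows $\M(\phi,H,J,\gamma^-,\gamma^+)$ with a smooth manifold structure whose component through $u_0$ has dimension $\mu(u_0)$.

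The remaining issue, and where the Liouville structure is used, is $C^0_{\mathrm{loc}}$ confinement: one must prevent solutions from escaping to infinity in $\hat{W}$. Here Lemma~\ref{thm:maxprinclin} enters: applied to $u$ on any connected component of the open preimage $u^{-1}(M\times(r_0,\infty))$, it says $r\circ u$ admits no interior local maximum unless it is constant. Combined with the exponential decay of $u$ to $\gamma^\pm$ at $s=\pm\infty$, whose images sit in a compact region because admissibility of $a$ forces every element of $\P(\phi,H)$ to lie in the subset where $H$ is nonlinear, this confines $u(\mathbb{R}^2)$ to a compact subset of $\hat{W}$. Standard elliptic bootstrapping then upgrades $W^{1,p}$ solutions to smooth ones, completing the argument. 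I expect the main technical obstacle to be verifying in detail that the Conley--Zehnder computation for the asymptotic operators of the twisted Floer equation, performed with a $\phi$-equivariant trivialization, reproduces the relative Maslov index of Definition~\ref{RMasl} without a hidden sign or shift; the compactness and transversality pieces are then routine adaptations of the classical Floer machinery to the mapping torus setting.
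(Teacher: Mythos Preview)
Your proposal is correct and follows the same overall strategy as the paper: establish $C^0$-confinement via the maximum principle (Lemma~\ref{thm:maxprinclin}) and then invoke the standard Fredholm/transversality package for fixed-point Floer homology. The paper's own proof is considerably terser---it treats Theorem~\ref{Moduli} jointly with Theorems~\ref{Moduli1}--\ref{ModuliS2}, devotes essentially all its detail to the confinement step (arguing carefully that a hypothetical escaping solution would force $r\circ u$ to be constant on a nonempty clopen subset of $\mathbb{R}^2$, a contradiction), and then simply remarks that ``since everything happens in a compact subset of $\hat{W}$, the arguments for the case of a closed symplectic manifold apply here as well''; your outline of the mapping-torus Fredholm setup and the Conley--Zehnder index computation is precisely the content the paper is deferring to.
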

\begin{proof}
See page~\pageref{proof:Moduli}.
\end{proof}
\begin{theorem}\label{Moduli1}
In the situation of Theorem~\ref{Moduli}, the set $\widehat{\M}^1(\phi,H,J,\gamma^-,\gamma^+)$ is finite. Its cardinality modulo 2 is denoted by $n(\phi,H,J,\gamma^-,\gamma^+).$
\end{theorem}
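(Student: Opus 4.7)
The plan is to show that $\widehat{\M}^1(\phi,H,J,\gamma^-,\gamma^+)$ is a compact $0$-dimensional manifold, which implies finiteness. By Theorem~\ref{Moduli} it is already a smooth $0$-manifold (since $\M^1$ is a smooth $1$-manifold on which $\mathbb{R}$ acts freely by translation in the $s$-variable when trajectories are non-constant), so the task reduces to compactness.

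First I would establish a uniform energy bound. Specializing Lemma~\ref{EId} to the case of $s$-independent continuation data $(H,G=0,J)$, the Poisson bracket term and the $\partial_sH$, $\partial_tG$ terms all vanish, yielding
\[E(u)=\A_{\phi,H}(\gamma^-)-\A_{\phi,H}(\gamma^+)\]
for every $u\in \M(\phi,H,J,\gamma^-,\gamma^+)$. Next I would establish a $C^0$ bound on any sequence $u_n$ in $\M^1$. Since $\phi$ is compactly supported in $W-M$, on the cylindrical end the twisted periodicity $\phi\circ u(s,t+1)=u(s,t)$ reduces to genuine periodicity; combined with the linear slope of $H$ and the contact-type conditions~\eqref{JCondInfComp}, \eqref{JCondInfDel} near infinity, Lemma~\ref{thm:maxprinclin} applies to the restriction of $u_n$ to the preimage of $M\times(r_0,\infty)$. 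As $\gamma^\pm$ lie in some compact set and $r\circ u_n$ has no interior local maximum on the cylindrical end, the images $u_n(\mathbb{R}^2)$ are uniformly contained in a compact subset of $\hat{W}$.

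With the energy bound and $C^0$ bound in hand, and because $\omega=d\lambda$ is exact (so sphere and disc bubbling are ruled out by Stokes), standard Gromov--Floer compactness implies that any sequence $[u_n]\in \widehat{\M}^1$ has a subsequence converging to a (possibly broken) Floer trajectory from $\gamma^-$ to $\gamma^+$, modulo reparametrization. It remains to rule out actual breaking in the closure of the one-dimensional stratum.

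The breaking-exclusion argument is a dimension count using the grading set up in Definition~\ref{RMasl}. Suppose a limit configuration consists of broken pieces $v_1,\dots,v_k\in \M(\phi,H,J,\delta_{i-1},\delta_i)$ with $\delta_0=\gamma^-$, $\delta_k=\gamma^+$, $k\geqslant 2$. Additivity of the relative Maslov index gives $\sum_{i=1}^k \mu(v_i)=\mu(u_n)=1$, while regularity of $(H,J)$ (Definition~\ref{def:RegFlDat}) forces each non-constant piece to satisfy $\mu(v_i)\geqslant 1$, and non-constancy is forced by $\delta_{i-1}\neq \delta_i$ (else two consecutive pieces could be merged). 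This contradicts $\sum \mu(v_i)=1$ with $k\geqslant 2$. Hence $\widehat{\M}^1$ is sequentially compact, so compact, and being a $0$-manifold it is finite. The main obstacle is the $C^0$ bound: one must check carefully that the hypotheses of the linear maximum principle of Lemma~\ref{thm:maxprinclin} really apply on the cylindrical end despite the twisted periodicity, which works precisely because $\phi$ is the identity outside a compact subset of the interior.
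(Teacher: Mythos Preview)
Your proposal is correct and follows essentially the same approach as the paper: the paper's proof establishes the $C^0$ bound via Lemma~\ref{thm:maxprinclin} (using that $\phi=\OP{id}$ on the cylindrical end), notes that exactness of $\omega$ precludes bubbling and that Lemma~\ref{EId} gives the energy bound, and then simply declares that ``the arguments for the case of a closed symplectic manifold apply here as well.'' You have merely unpacked that last clause by writing out the Gromov--Floer compactness and the standard index-additivity argument excluding breaking in the one-dimensional stratum.
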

\begin{proof}
See page~\pageref{proof:Moduli}.
\end{proof}
\begin{theorem}\label{Moduli2}
Let $\phi\in\OP{Symp}_c(W,\lambda/d),$ let $(H,J)$ be regular Floer data for $\phi$, let $\hcl\in \pi_0\Omega_\phi,$ and let $\gamma_0\in \P(\phi,H)\cap \hcl=:\P^\hcl(\phi,H).$  Then
\[\left(CF_\ast(\phi,H,J,\hcl),\partial_{H,J}\right)\]
is a $\mathbb{Z}_{2N_\hcl}$-graded chain complex (see page~\pageref{eq:ChernNum} for  the definition of $N_\theta$), where
\[CF_k(\phi,H,J,\hcl):=\bigoplus_{ \begin{matrix} \gamma\in \P^\hcl(\phi,H)\\ \mu(\gamma,\gamma_0)=k \end{matrix}} \mathbb{Z}_2\left\langle \gamma\right\rangle,\]
and $\partial = \partial_{H,J}$ is defined by
\[\partial \left\langle \gamma \right\rangle := \sum_{\begin{matrix} \gamma'\in \P^\hcl(\phi,H)\\ \mu(\gamma,\gamma')=1 \end{matrix}} n(\phi,H,J,\gamma,\gamma')\left\langle \gamma'\right\rangle. \]
We denote the homology of this complex by
\[HF_\ast(\phi,H,J,\hcl).\]
Note that the grading depends on the choice of $\gamma_0.$
\end{theorem}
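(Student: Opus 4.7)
The plan is to verify three things: finiteness of the generators in each degree, well-definedness of the $\mathbb{Z}_{2N_\hcl}$-grading, and the identity $\partial^2 = 0$.

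First I would establish finiteness of $\P^\hcl(\phi, H)$. Since $H_t(x,r) = ar$ near infinity with $a$ admissible, any twisted loop meeting $M \times (r_0, \infty)$ is trapped there ($\phi$ is the identity in this region, so the twisting becomes genuine periodicity) and projects to a $1$-periodic orbit of $X_{H_t} = a R^\beta$, hence to a Reeb orbit of period $a$---contradicting admissibility. Therefore $\P(\phi, H)$ lies in a fixed compact subset of $\hat{W}$; the nondegeneracy hypothesis on $H$ makes this set discrete, and compactness gives finiteness, so each $CF_k(\phi, H, J, \hcl)$ is finite-dimensional. For the grading, two homotopies $u_0, u_1$ through $\Omega_\phi$ from $\gamma_0$ to $\gamma$ concatenate to a loop in $\Omega_\phi$ based at $\hcl$, and the difference $\mu(u_1) - \mu(u_0)$ equals $2\langle c_1(T^V\hat{W}), [u_1 \# \bar{u}_0]\rangle \in 2N_\hcl \mathbb{Z}$ by the definition of $N_\hcl$. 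Thus $\mu(\gamma, \gamma_0) \in \mathbb{Z}_{2N_\hcl}$ is well-defined, and additivity of the relative Maslov index under concatenation shows $\partial$ decreases degree by $1$.

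The core task is $\partial^2 = 0$. I would compactify the two-dimensional moduli spaces $\widehat{\M}^2(\phi, H, J, \gamma, \gamma'')$ for pairs $\gamma, \gamma'' \in \P^\hcl(\phi, H)$. Applying Lemma~\ref{EId} with $G \equiv 0$ and $s$-independent $(H, J)$ yields the a priori energy bound
\begin{equation*}
E(u) = \A_{\phi, H}(\gamma) - \A_{\phi, H}(\gamma''),
\end{equation*}
depending only on the asymptotes. A uniform $C^0$ bound follows from Lemma~\ref{thm:maxprinclin}: on the open set $u^{-1}(M \times (r_0, \infty))$, the function $r \circ u$ admits no interior maximum, so it is bounded by $\max(r \circ \gamma, r \circ \gamma'', r_0)$. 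Exactness of $(\hat{W}, \lambda)$ rules out $J$-holomorphic sphere bubbles by Stokes. With energy, $C^0$, and no-bubbling bounds in hand, standard Floer--Gromov compactness---most transparently phrased inside the mapping torus $\hat{W}_\phi \to \mathbb{S}^1$, where the twisted Floer equation \eqref{FloerEq}, \eqref{eq:percond} becomes a genuinely $1$-periodic one---compactifies $\widehat{\M}^2$ by adjoining broken trajectories, and a dimension count prevents multiple simultaneous breaks.

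Combining compactness with the standard gluing theorem, the topological boundary of the compactified $\widehat{\M}^2(\phi, H, J, \gamma, \gamma'')$ is identified with the finite disjoint union
\begin{equation*}
\bigsqcup_{\gamma' \in \P^\hcl(\phi, H)} \widehat{\M}^1(\phi, H, J, \gamma, \gamma') \times \widehat{\M}^1(\phi, H, J, \gamma', \gamma''),
\end{equation*}
and counting mod $2$ yields $\partial^2 \langle \gamma \rangle = 0$. The main analytic obstacle is compactness at infinity, but this has already been reduced to Lemma~\ref{thm:maxprinclin}. The remaining conceptual point is that the $s$-independent twisting conditions \eqref{HamTwist}, \eqref{JTwist}, \eqref{eq:percond} let the entire analysis descend to the mapping torus, where it reduces to the classical closed-string Hamiltonian Floer theory with only notational changes. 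I expect the main writing obstacle to be a clean formulation of compactness and gluing in the twisted setting, rather than any new analytic difficulty.
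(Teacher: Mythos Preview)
Your proposal is correct and follows essentially the same route as the paper: confine all Floer trajectories to a compact set via the maximum principle (Lemma~\ref{thm:maxprinclin}), bound the energy via Lemma~\ref{EId}, exclude bubbling by exactness of $\omega$, and then invoke the standard closed-manifold Floer package for $\partial^2=0$. The paper's proof is actually a joint proof of Theorems~\ref{Moduli}--\ref{ModuliS2} and is correspondingly terse---it carries out only the $C^0$-bound argument in detail and then says ``the arguments for the case of a closed symplectic manifold apply here as well''; your write-up supplies the intermediate steps (finiteness of generators, well-definedness of the $\mathbb{Z}_{2N_\hcl}$-grading, the broken-trajectory description of $\partial\widehat{\M}^2$) that the paper leaves implicit, and your mapping-torus framing makes explicit the mechanism by which the twisted problem reduces to the periodic one.
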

\begin{proof}
See page~\pageref{proof:Moduli}.
\end{proof}
\subsection{Continuation maps}
\begin{theorem}\label{ModuliS}
Let $\phi\in\OP{Symp}_c(W,\lambda/d),$ let $(H^\alpha,J^\alpha)$ and $(H^\beta, J^\beta)$ be regular Floer data for $\phi,$ and let $\gamma^\alpha\in\P(\phi,H^\alpha), \gamma^\beta\in\P(\phi, H^\beta).$ Then, for generic continuation data $(H_s, G_s, J_s)$ from $(H^\alpha, J^\alpha)$ to $(H^\beta, J^\beta),$ the set
\[\begin{split}
\M(\phi, \{H_s,G_s,J_s\}, \gamma^\alpha,\gamma^\beta):= \bigg\lbrace u:\mathbb{R}^2\to\hat{W}\:|\:&\eqref{FloerEqS}\:\&\:\eqref{eq:percond}\:\&\\
&\lim_{s\to-\infty}u(s,t)=\gamma^\alpha(t)\:\&\\
&\lim_{s\to+\infty}u(s,t)=\gamma^\beta(t)\bigg\rbrace
\end{split}\]
is a manifold (cut out transversely by the Floer equation). The connected component containing $u_0$ has dimension $\mu(u_0).$ The $k$-dimensional part of $\M(\cdots)$ is denoted by $\M^k(\cdots).$ 
\end{theorem}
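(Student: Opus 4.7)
The plan is to carry out the standard parametric Floer-theoretic transversality argument for the $s$-dependent equation \eqref{FloerEqS}, with the twisted boundary condition \eqref{eq:percond} absorbed by passing to the mapping torus $\hat{W}_\phi$. First I would identify solutions with sections $\bar u\colon\mathbb{R}\times\mathbb{S}^1\to\hat{W}_\phi$ asymptotic at $\pm\infty$ to the sections of $p_\phi$ determined by $\gamma^\alpha$ and $\gamma^\beta$. Weighted Sobolev completions $W^{1,p}_\delta$, with exponential weight $\delta>0$ below the spectral gap of the asymptotic operators (which exists since $H^\alpha$ and $H^\beta$ are nondegenerate with respect to $\phi$), produce a Banach manifold $\mathcal{B}$ of such maps, and the Floer operator associated to the continuation data is a smooth section $\mathcal{F}$ of a Banach bundle $\mathcal{E}\to\mathcal{B}$ whose zero locus is $\M(\cdots)$.

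Next I would verify the Fredholm property and compute the index. The vertical linearization $D\mathcal{F}(u_0)$ is a Cauchy--Riemann-type operator on $\bar u_0^\ast T^V\hat{W}$ whose asymptotic operators are the linearized Hamiltonian flows of $H^\alpha$ and $H^\beta$ along $\gamma^\alpha$ and $\gamma^\beta$; these are invertible by nondegeneracy. Working in a $\phi$-equivariant symplectic trivialization of $\bar u_0^\ast T^V\hat{W}$ of the type used in Definition~\ref{RMasl}, the standard spectral-flow argument identifies the Fredholm index of $D\mathcal{F}(u_0)$ with the difference of Conley--Zehnder indices at the two ends, which is exactly $\mu(u_0)$.

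For transversality I would pass to the universal moduli space $\M^{\mathrm{univ}}$, letting $(H_s,G_s,J_s)$ vary through a separable Banach manifold of admissible continuation data as in Definition~\ref{def:ContData}; perturbations are taken compactly supported inside the region $\{r<r_0\}$ on a bounded range of $s$, which preserves \eqref{JCondInfComp}--\eqref{JCondInfDel} and the monotonicity of $a(s,t)$ at infinity. Surjectivity of the universal linearization at a solution $u_0$ reduces, via the standard Floer--Hofer--Salamon orthogonality argument, to unique continuation for Cauchy--Riemann operators together with somewhere-injectivity of $u_0$. Sard--Smale then produces a comeagre set of regular continuation data for which $\M(\cdots)$ is cut out transversely, and the connected component of any $u_0$ is a smooth manifold of dimension $\mu(u_0)$.

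The main obstacle I expect is the transversality step, specifically verifying somewhere-injectivity in the twisted setting: on the mapping torus $\hat{W}_\phi$ the solution $\bar u_0$ may carry symmetries coming from $\phi$, and the usual argument via the asymptotic behaviour of $u_0$ and unique continuation has to be adapted to the $\phi$-twisted, $s$-dependent framework. The auxiliary task of confining $u_0$ to a compact region where admissible perturbations actually live is handled by the $C^0$ bound from the maximum principle Lemma~\ref{thm:maxprinclin}, combined with the energy identity Lemma~\ref{EId}, which bounds $E(u)$ in terms of the action difference $\A_{\phi,H^\alpha}(\gamma^\alpha)-\A_{\phi,H^\beta}(\gamma^\beta)$ and the controlled integrals of $\partial_sH_{s,t}$, $\partial_tG_{s,t}$, and $\{G_{s,t},H_{s,t}\}$.
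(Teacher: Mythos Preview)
Your proposal is correct in outline and considerably more detailed than the paper's own argument. The paper proves Theorem~\ref{ModuliS} jointly with Theorems~\ref{Moduli}--\ref{ModuliS2}, and the only step it carries out explicitly is the $C^0$-bound: using the maximum principle (Lemma~\ref{thm:maxprinclin}), the asymptotic conditions, and the fact that $\phi=\OP{id}$ on $M\times(r_0,\infty)$, it shows every solution is confined to a fixed compact subset of $\hat{W}$. It then declares that the arguments for the case of a closed symplectic manifold apply, noting only that exactness of $\omega$ rules out bubbling and that Lemma~\ref{EId} bounds the energy. Your Fredholm setup, index computation, and Sard--Smale transversality argument are precisely what that appeal to the closed case stands for; the paper simply does not spell them out.

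One small correction: the obstacle you anticipate---somewhere-injectivity in the twisted setting---is not really an issue for Theorem~\ref{ModuliS}. Since the equation~\eqref{FloerEqS} is genuinely $s$-dependent, there is no $\mathbb{R}$-translation symmetry and hence no multiple-cover obstruction; perturbing $H_{s,t}$ or $G_{s,t}$ near any point of the image, combined with unique continuation, already yields surjectivity of the universal linearization. The injectivity concern belongs rather to the $s$-independent Theorem~\ref{Moduli}, which the paper likewise defers to the closed-manifold literature.
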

\begin{proof}
See page~\pageref{proof:Moduli}.
\end{proof}
\begin{theorem}\label{ModuliS1}
In the situation of Theorem~\ref{ModuliS}, the set $\M^0 (\phi,\{H_s,G_s,J_s\},\gamma^\alpha,\gamma^\beta)$
is finite. Its cardinality modulo 2 is denoted by $ n(\phi,\{H_s,G_s,J_s\},\gamma^\alpha,\gamma^\beta).$
\end{theorem}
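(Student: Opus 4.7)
The plan is to establish sequential compactness of $\M^0(\phi,\{H_s,G_s,J_s\},\gamma^\alpha,\gamma^\beta)$, which together with the $0$-dimensional manifold structure from Theorem~\ref{ModuliS} will force finiteness. The argument proceeds in four steps: (i) a uniform $C^0$ bound, (ii) a uniform energy bound, (iii) Gromov--Floer compactness yielding a (possibly broken) limit, and (iv) a dimension count ruling out any breaking in the $0$-dimensional stratum.

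First, for any $u\in\M(\phi,\{H_s,G_s,J_s\},\gamma^\alpha,\gamma^\beta)$, the maximum principle (Lemma~\ref{thm:maxprinclin}) applied in $M\times(r_0,\infty)$ shows that $r\circ u$ admits no interior local maximum on the region where $u$ takes values outside a fixed compact set. Combined with the asymptotic conditions $\lim_{s\to\pm\infty}u(s,t)=\gamma^\pm(t)$ (whose images are compact), this forces $u$ to stay inside a compact subset $K\subset\hat{W}$ depending only on $\gamma^\alpha,\gamma^\beta$ and the reference value $r_0$ from Definition~\ref{def:ContData}. In particular, the $C^0$ bound is uniform over the entire moduli space.

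Second, I would use the energy identity of Lemma~\ref{EId} to obtain a uniform energy bound. The leading term $\A_{\phi,H^\alpha}(\gamma^\alpha)-\A_{\phi,H^\beta}(\gamma^\beta)$ is a fixed constant. The correction integrals involving $\partial_s H_{s,t}$, $\partial_t G_{s,t}$, and the Poisson bracket $\{G_{s,t},H_{s,t}\}$ are controlled as follows: the $s$-supports of $\partial_s H$ and $\partial_s G$ lie in a compact interval (by the stabilisation requirement at $s=\pm\infty$), while $G_{s,t}$ and its derivatives are supported in a compact region of $\hat{W}$ (since $G_{s,t}=0$ near infinity), so the integrands are bounded on $K$ by step~(i). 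Exactness of $\omega=d\lambda$ on $\hat{W}$ precludes sphere bubbling, so standard Gromov--Floer compactness applies: a sequence $(u_n)\subset\M^0$ subconverges, after reparametrisation, to a broken configuration consisting of finitely many non-trivial Floer trajectories for $(H^\alpha,J^\alpha)$ chained before a single continuation trajectory, followed by finitely many non-trivial Floer trajectories for $(H^\beta,J^\beta)$.

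Finally, I would rule out breaking by index additivity. If the limit configuration involves a continuation piece $u_\infty'$ together with $k$ Floer pieces before and $m$ after, then
\[
0=\mu(u_n)=\mu(u_\infty')+\sum_{i=1}^{k}\mu(v_i^\alpha)+\sum_{j=1}^{m}\mu(v_j^\beta).
\]
Regularity of the Floer data $(H^\pm,J^\pm)$ forces every non-constant Floer trajectory $v$ between distinct orbits to satisfy $\mu(v)\geqslant 1$ (before quotienting by $\mathbb{R}$), while $\mu(u_\infty')\geqslant 0$ by Theorem~\ref{ModuliS}. Hence $k=m=0$ and the sequence converges to a single continuation solution in $\M^0$, proving $\M^0$ is compact; a compact $0$-manifold is finite, so $n(\phi,\{H_s,G_s,J_s\},\gamma^\alpha,\gamma^\beta)\in\mathbb{Z}_2$ is well defined. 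The main obstacle is step~(ii): the Hamiltonians $H_{s,t}(x,r)=a(s,t)\,r$ grow linearly near infinity, so $\partial_s H$ is not a priori bounded, and the energy estimate genuinely depends on having the $C^0$ bound from step~(i) in hand before invoking Lemma~\ref{EId}.
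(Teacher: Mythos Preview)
Your proposal is correct and follows essentially the same approach as the paper: establish a uniform $C^0$ bound via the maximum principle (Lemma~\ref{thm:maxprinclin}), derive a uniform energy bound from Lemma~\ref{EId}, note that exactness of $\omega$ excludes bubbling, and then invoke standard Gromov--Floer compactness. The only difference is cosmetic: the paper packages steps (iii)--(iv) into the single sentence ``the arguments for the case of a closed symplectic manifold apply here as well,'' whereas you spell out the broken-trajectory limit and the index-additivity argument explicitly.
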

\begin{proof}
See below.
\end{proof}
\begin{theorem}\label{ModuliS2}
Let $\phi,H^\alpha,H^\beta,H_s,G_s, J^\alpha,J^\beta,J_s$ be as in Theorem~\ref{ModuliS}. Then the linear map
\[\Phi^{\beta\alpha}(H_s,G_s,J_s): CF_\ast \left(\phi,H^\alpha,J^\alpha,\hcl\right) \to CF_\ast \left(\phi,H^\beta,J^\beta,\hcl\right)\]
defined on generators by
\[\Phi^{\beta\alpha}(H_s,G_s,J_s)\left\langle \gamma^\alpha \right\rangle := \sum_{\begin{matrix}\gamma^ \beta\in \P^\hcl(\phi,H^\beta)\\ \mu(\gamma^\beta,\gamma^\alpha)=0  \end{matrix}} n(\phi,\{H_s,G_s,J_s\},\gamma^\alpha,\gamma^\beta) \left\langle \gamma^\beta\right\rangle\]
induces a homomorphism on homology (denoted by the same letter and called the \textbf{continuation map}). The homomorphism
\[\Phi^{\beta\alpha}(H_s,J_s): HF_\ast \Big(\phi,H^\alpha,J^\alpha,\hcl\Big) \to HF_\ast \left(\phi,H^\beta,J^\beta,\hcl\right)\]
is independent of the choice of continuation data. Moreover, it satisfies the composition formula
\[\Phi^{\delta\beta}\circ \Phi^{\beta\alpha}= \Phi^{\delta\alpha},\quad \Phi^{\alpha\alpha} =\OP{id}.\]
\end{theorem}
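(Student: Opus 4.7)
The plan is to follow the standard Floer-theoretic scheme for continuation maps, adapted to the twisted-loop setting of this paper. The three claims split naturally: (a) $\Phi^{\beta\alpha}$ intertwines the boundary operators, so descends to homology; (b) the induced map on homology does not depend on the choice of continuation data; (c) the composition formula, together with the normalization $\Phi^{\alpha\alpha}=\OP{id}$.

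For (a), I would analyze the one-dimensional component of the moduli space produced in Theorem~\ref{ModuliS} and describe the ends of its compactification. Compactness rests on three inputs already present in the paper. First, the maximum principle (Lemma~\ref{thm:maxprinclin}) forces $r\circ u$ to attain its maximum either at the asymptotic ends or inside $\{r\leqslant r_0\}$; since $\gamma^\alpha,\gamma^\beta$ are fixed, this gives a uniform $C^0$-bound on any sequence in $\M^1$. Second, exactness of $(\hat{W},\lambda)$ combined with this bound rules out bubbling. Third, the energy identity of Lemma~\ref{EId}, together with the monotonicity $\partial_s a\geqslant 0$ from Definition~\ref{def:ContData} and the compact support of the non-trivial $s$-dependence of $H_{s,t}$ and of $G_{s,t}$, gives a uniform energy bound. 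With compactness secured, a standard gluing argument identifies the ends of the compactified $\M^1$ with broken configurations $v\#u$ or $u\#w$, where $v\in\widehat{\M}^1(\phi,H^\alpha,J^\alpha,\gamma^\alpha,\cdot)$, $w\in\widehat{\M}^1(\phi,H^\beta,J^\beta,\cdot,\gamma^\beta)$, and $u\in \M^0(\phi,\{H_s,G_s,J_s\},\cdot,\cdot)$. Counting ends modulo~2 yields
\[\partial_{H^\beta,J^\beta}\circ\Phi^{\beta\alpha}+\Phi^{\beta\alpha}\circ\partial_{H^\alpha,J^\alpha}=0.\]

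For (b), given two regular continuation data $(H_s^0,G_s^0,J_s^0)$ and $(H_s^1,G_s^1,J_s^1)$, I would join them by a generic smooth path $\{(H_s^\tau,G_s^\tau,J_s^\tau)\}_{\tau\in[0,1]}$ of admissible continuation data and consider the parametrized moduli space obtained by varying $\tau$. Its zero-dimensional part, counted modulo~2, defines a chain homotopy $K\colon CF_\ast(\phi,H^\alpha,J^\alpha,\hcl)\to CF_{\ast+1}(\phi,H^\beta,J^\beta,\hcl)$. The same three-step compactness analysis as in (a), applied to the one-dimensional parametrized moduli space, identifies its boundary ends with $\Phi_1^{\beta\alpha}-\Phi_0^{\beta\alpha}+\partial K+K\partial$, yielding the desired chain homotopy.

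For (c), the identity $\Phi^{\alpha\alpha}=\OP{id}$ follows by choosing the $s$-independent continuation datum $(H^\alpha,0,J^\alpha)$: every finite-energy solution of \eqref{FloerEqS} is then $s$-independent and equal to an element of $\P(\phi,H^\alpha)$, so only the diagonal contributes. The composition formula is proved by a neck-stretching argument: concatenating the two continuation data along large translates in the $s$-variable produces a continuation datum from $\alpha$ to $\delta$, and a standard gluing/compactness argument shows that, for sufficiently large translation parameter, its zero-dimensional moduli space is in bijection with the pairs counted by $\Phi^{\delta\beta}\circ\Phi^{\beta\alpha}$; the invariance from (b) then identifies the chain map of the glued datum with $\Phi^{\delta\alpha}$. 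The principal obstacle throughout is the compactness analysis of the relevant one-dimensional moduli spaces in the presence of the auxiliary Hamiltonian $G_{s,t}$ and of the $s$-dependence of $H_{s,t}$; once this is handled via Lemmas~\ref{thm:maxprinclin} and~\ref{EId}, the algebraic identities are formal consequences of end-counting.
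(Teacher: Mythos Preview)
Your proposal is correct and follows the same approach as the paper. The paper is in fact even more terse: it proves Theorems~\ref{Moduli}--\ref{ModuliS2} simultaneously by establishing a uniform $C^0$-bound via Lemma~\ref{thm:maxprinclin}, noting that exactness precludes bubbling and that Lemma~\ref{EId} yields a uniform energy bound, and then simply asserting that ``the arguments for the case of a closed symplectic manifold apply here as well''---your sketch of parts (a)--(c) is precisely the standard closed-manifold argument the paper defers to.
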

\begin{proof}
See below.
\end{proof}
\begin{corollary}\label{SlopeIso}
Let $\phi\in\OP{Symp}_c(W,\lambda/d)$ and $\hcl\in\pi_0\Omega_\phi.$ If $(H^\alpha,J^\alpha)$ and $(H^\beta,J^\beta)$ are regular Floer data for $\phi$, and $H^\alpha,H^\beta$ have the same slope, then
\[HF_\ast \left(\phi,H^\alpha,J^\alpha,\hcl\right) \cong HF_\ast \left(\phi,H^\beta,J^\beta,\hcl\right).\]
\end{corollary}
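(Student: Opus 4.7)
The plan is to invoke Theorem~\ref{ModuliS2} in both directions and assemble the resulting continuation maps into a pair of mutual inverses. The only thing that needs to be verified before we can apply the theorem is the existence of continuation data from $(H^\alpha,J^\alpha)$ to $(H^\beta,J^\beta)$ and from $(H^\beta,J^\beta)$ to $(H^\alpha,J^\alpha)$.

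First, since $H^\alpha$ and $H^\beta$ share the common slope $a$, I would construct continuation data in each direction using a slope function $a(s,t) \equiv a$ constant in $s$. Such a function trivially satisfies $\partial_s a \equiv 0$, hence the monotonicity requirement of Definition~\ref{def:ContData}. The Hamiltonians $H_{s,t}$ can then be taken to interpolate arbitrarily between $H^\alpha_t$ and $H^\beta_t$ inside a compact region while staying equal to $ar$ near infinity; the auxiliary Hamiltonian $G_{s,t}$ can be chosen to vanish near infinity and to interpolate arbitrarily elsewhere; and $J_{s,t}$ can be chosen to interpolate between $J^\alpha_t$ and $J^\beta_t$ subject to the conditions \eqref{JCondInfComp} and \eqref{JCondInfDel} near infinity. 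The same construction, with the roles of $\alpha$ and $\beta$ reversed, provides continuation data in the opposite direction.

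Next, applying Theorem~\ref{ModuliS2} to these two choices of continuation data yields continuation maps
\[
\Phi^{\beta\alpha}: HF_\ast(\phi,H^\alpha,J^\alpha,\hcl)\to HF_\ast(\phi,H^\beta,J^\beta,\hcl),
\]
\[
\Phi^{\alpha\beta}: HF_\ast(\phi,H^\beta,J^\beta,\hcl)\to HF_\ast(\phi,H^\alpha,J^\alpha,\hcl),
\]
both of which are independent of the choices made. The composition formula from Theorem~\ref{ModuliS2} then gives
\[
\Phi^{\alpha\beta}\circ\Phi^{\beta\alpha}=\Phi^{\alpha\alpha}=\OP{id},\qquad \Phi^{\beta\alpha}\circ\Phi^{\alpha\beta}=\Phi^{\beta\beta}=\OP{id},
\]
so $\Phi^{\beta\alpha}$ is the desired isomorphism.

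There is no substantial obstacle here: the proof is a direct application of the naturality established in Theorem~\ref{ModuliS2}. The only place one has to be slightly careful is in checking that continuation data exist \emph{in both directions}, which is precisely what the equal-slope hypothesis buys us via the weak monotonicity $\partial_s a\geqslant 0$ (satisfied with equality by a constant slope function). Everything else is formal.
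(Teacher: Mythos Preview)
Your proof is correct and is precisely the intended argument: the paper states Corollary~\ref{SlopeIso} immediately after Theorem~\ref{ModuliS2} without further proof, leaving it as the obvious consequence of the composition formula $\Phi^{\alpha\beta}\circ\Phi^{\beta\alpha}=\Phi^{\alpha\alpha}=\OP{id}$ once one observes (as you do) that equal slopes permit continuation data in both directions.
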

\begin{proof}[Proof of Theorems \ref{Moduli}, \ref{Moduli1}, \ref{Moduli2}, \ref{ModuliS}, \ref{ModuliS1} and \ref{ModuliS2}]\label{proof:Moduli} 
We prove that there exist compact subsets $K(\phi,H,J)$ and $K(\phi,\{H_s,G_s,J_s\})$ of $\hat{W}$ such that all elements of the Moduli spaces $\M(\phi,H,J,\gamma^-,\gamma^+)$ and $\M(\phi,\{H_s,G_s,J_s\},\gamma^\alpha,\gamma^\beta)$ map $\mathbb{R}^2$ into $K(\phi, H,J)$ and $K(\phi,\{H_s,G_s,J_s\}),$ respectively. Since $\M(\phi,H,J,\gamma^-,\gamma^+)$ is a special case of\\ $\M(\phi,\{H_s,G_s,J_s\},\gamma^\alpha,\gamma^\beta)$ (for $H_s,J_s$ $s$-independent, $G_s=0$, $\gamma^-=\gamma^\alpha,$ and $\gamma^+=\gamma^\beta$), we consider only $\M(\phi,\{H_s,G_s,J_s\},\gamma^\alpha,\gamma^\beta).$

Let $r_0\in(1,\infty)$ be such that $G_{s,t}(x,r)=0, H_{s,t}(x,r)=a(s,t)$ and conditions \eqref{JCondInfComp} and \eqref{JCondInfDel} hold for $J_s(x,r),$ for all $s,t\in\mathbb{R},$ and $(x,r)\in M\times(r_0,\infty).$ The existence of such $r_0$ is guaranteed by the definition of continuation data (Definition~\ref{def:ContData}). Since the slopes of $H^\alpha$ and $H^\beta$ are admissible, the sets $\P_{\phi, H^\alpha}$ and $\P_{\phi,H^\beta}$ are subsets of the compact set $\hat{W}\setminus M\times (r_0,\infty).$ Assume there exist $\gamma^\alpha\in \P_{\phi,H^\alpha}, \gamma^\beta\in\P_{\phi, H^\beta},$ and $u\in \M(\phi,\{H_s,G_s,J_s\},\gamma^\alpha,\gamma^\beta)$ such that
\[u(\mathbb{R}^2)\cap M\times(r_0+1,\infty)\not=\emptyset.\]
The symplectomorphism $\phi$ is equal to the identity on $M\times(r_0,\infty).$ Therefore
\[u(s,t)=u(s,t+1)\]
whenever $u(s,t)\in M\times (r_0,\infty).$ This together with
\begin{align*}
\lim_{s\to-\infty}u(s,t)&=\gamma^\alpha(t),\\
\lim_{s\to+\infty}u(s,t)&=\gamma^\beta(t)
\end{align*}
implies
\[u([-C,C]\times[0,1])\cap M\times (r_0,\infty)= u(\mathbb{R}^2)\cap M\times(r_0,\infty)\]
for $C\in(0,\infty)$ large enough. Since $[-C,C]\times [0,1]$ is a compact set, the function $r\circ u$ attains maximum on $u^{-1}(M\times(r_0,\infty)).$ Let $U\subset u^{-1}(M\times(r_0,\infty))$ be the connected component of a point at which the maximum is achieved. Connected components of a locally path-connected space are open. Hence $U$ is an open subset of $u^{-1}(M\times(r_0,\infty)).$ Since $u^{-1}(M\times(r_0,\infty))$ is open in $\mathbb{R}^2,$ $U$ is an open subset of $\mathbb{R}^2$ as well. 

Lemma~\ref{thm:maxprinclin} implies that $\left.r\circ u\right|_U$ is constant. Denote by $c\in(0,\infty)$ the value of $r\circ u$ on $U.$ Therefore $U$ is a connected component of $u^{-1}(M\times\{c\}).$ It follows that $U$ is a closed subset of $u^{-1}(M\times\{c\}).$ Since $u^{-1}(M\times\{c\})$ is closed in $\mathbb{R}^2,$ so is $U.$ This implies that $U$ is a non-empty open and closed subset of $\mathbb{R}^2,$ i.e. $U=\mathbb{R}^2.$ However, this is impossible because 
\[\lim_{s\to+\infty}u(s,t)=\gamma^\beta(t).\] 
Since everything happens in a compact subset of $\hat{W},$ the arguments for the case of a closed symplectic manifold (instead of $\hat{W}$) apply here as well. 

We end the proof by two remarks. Bubbling-off cannot occur because the symplectic form on $\hat{W}$ is exact. Lemma~\ref{EId} implies that elements of $\M(\phi,\{H_s,G_s,J_s\},\gamma^\alpha,\gamma^\beta)$ have uniformly bounded energy.
\end{proof}

\begin{remark}\label{def:HF}
Let $\phi\in\OP{Symp}_c(W,\lambda/d)$ and let $\hcl\in\Omega_\phi.$ For a fixed admissible slope, the groups $HF_\ast(\phi,H,J,\hcl)$ are canonically isomorphic to each other via the continuation maps. Hence we can write
\[HF_\ast(\phi,a,\hcl).\]
We define
\[HF_\ast (\phi,a):=\bigoplus_{\hcl\in \pi_0\Omega_\phi}HF_\ast(\phi,a,\hcl).\]
\end{remark}

\begin{definition}
Let $\phi$ and $\hcl$ be as in Remark~\ref{def:HF}. The continuation maps give rise to a homomorphism (which we call the same)
\begin{equation}\label{eq:contmap}
HF_\ast(\phi,a_1,\hcl)\to HF_\ast(\phi, a_2,\hcl)
\end{equation}
whenever $a_1\leqslant a_2.$ The groups $HF_\ast(\phi,a,\hcl),$ where $a\in\mathbb{R}$ goes through all admissible slopes, together with homomorphisms~\eqref{eq:contmap} form a directed system of groups. We define
\begin{align*}
& HF_\ast(\phi,\infty, \hcl):= \underset{a}{\lim_{\longrightarrow}} HF_\ast(\phi,a,\hcl),\\
& HF_\ast(\phi,\infty):= \bigoplus_{\hcl\in \pi_0\Omega_\phi} HF_\ast(\phi,\infty,\hcl).
\end{align*}
\end{definition}

\begin{lemma}\label{thm:VarSl}
Let $a_1$ and $a_2$ be real numbers such that all numbers in the interval $[a_1,a_2]$ are admissible. Then
\[HF_\ast(\phi,a_1,\hcl)\cong HF_\ast(\phi,a_2,\hcl)\]
for all $\phi\in\OP{Symp}_c(W,\lambda/d)$ and $\hcl\in\pi_0\Omega_\phi.$ Moreover, the isomorphism is realized by the continuation map. 
\end{lemma}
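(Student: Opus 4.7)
The plan is to reduce to the tautological case in which the chain complexes for $a_1$ and $a_2$ literally coincide and the continuation map acts as the identity on generators. By Corollary~\ref{SlopeIso}, $HF_\ast(\phi, a_i, \hcl)$ is independent of the specific regular Floer data chosen for slope $a_i$, so it suffices to prove the isomorphism for one convenient pair.

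The pivotal observation is that the admissibility of every slope in $[a_1, a_2]$ traps all relevant objects in a fixed compact region. Choose $R > 0$ large enough that $\phi = \OP{id}$ on $M \times [R, \infty)$. For any Hamiltonian $H$ with $H(x, r) = h(r)$ on $M \times [R, \infty)$ and $h'(r) \in [a_1, a_2]$, any twisted loop $\gamma \in \P_{\phi, H}$ that enters $M \times (R, \infty)$ is literally 1-periodic there and satisfies $\dot\gamma = h'(r) R^\beta$, so it projects to a closed Reeb orbit of period $h'(r) \in [a_1, a_2]$. No such orbit exists by hypothesis, so every such twisted loop lies in the compact set $K := \hat{W} \setminus M \times (R, \infty)$.

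I therefore pick regular Floer data $(H^1, J^1)$ of slope $a_1$ and modify it only on $M \times (R, \infty)$ to produce $(H^2, J^2)$ of slope $a_2$: take $H^2 = H^1$ and $J^2 = J^1$ on $K$, and extend smoothly at infinity so that $H^2(x,r) = a_2 r + \text{const}$ for $r$ large and $J^2$ satisfies~\eqref{JCondInfComp}--\eqref{JCondInfDel}. By the previous paragraph $\P_{\phi, H^1} = \P_{\phi, H^2}$, and after a small common perturbation of $(H^1, J^1)$ supported in $K$ both pairs are regular with identical generators. Next, I build continuation data $(H_{s,t}, 0, J_{s,t})$ from $(H^1, J^1)$ to $(H^2, J^2)$ whose restriction to $K$ is $s$-independent and equal to $(H^1, J^1)$, and at infinity has slope $a(s) \in [a_1, a_2]$ increasing monotonically in $s$ from $a_1$ to $a_2$.

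By Lemma~\ref{thm:maxprinclin}, $r \circ u$ admits no local maximum in $M \times (R, \infty)$ for any continuation trajectory $u$; combined with the asymptotic conditions $\lim_{s \to \pm\infty} u(s, t) = \gamma^\pm(t) \in K$, this forces $u(\mathbb{R}^2) \subset K$. Over $K$ the continuation PDE reduces to the $s$-independent Floer equation for $(H^1, J^1)$, whose only zero-dimensional solutions are the stationary ones $u(s, t) = \gamma(t)$, one per $\gamma \in \P_{\phi, H^1}$. Consequently the continuation map sends $\langle \gamma \rangle \mapsto \langle \gamma \rangle$ at the chain level and is an isomorphism on homology. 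The main technical hurdle is arranging the continuation data so that the hypotheses of Lemma~\ref{thm:maxprinclin} (the model form of $(H_{s,t}, J_{s,t})$ at infinity together with $\partial_s a \geq 0$) are simultaneously compatible with the $s$-independence on $K$; once this is done, the remaining verifications are routine.
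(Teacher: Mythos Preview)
Your approach is essentially the same as the paper's: modify the Hamiltonian only on the cylindrical end, observe that admissibility of all slopes in $[a_1,a_2]$ forbids periodic orbits there, and use a maximum principle to confine the Floer/continuation trajectories to the compact region where nothing changed. You are in fact more explicit than the paper about why the continuation map is the identity on generators; the paper just asserts that the two chain complexes coincide.

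There is one point worth flagging. You invoke Lemma~\ref{thm:maxprinclin} to conclude that $r\circ u$ has no local maximum on all of $M\times(R,\infty)$. But that lemma, as stated, assumes the Hamiltonian is \emph{linear} in $r$ on the region in question, whereas your $H_{s,t}$ is only linear for $r$ sufficiently large and merely radial (a function of $r$) on the transition annulus $M\times(R,R')$. So Lemma~\ref{thm:maxprinclin} alone only traps trajectories below $r=R'$, not below $r=R$, which is what you need for $s$-independence. The paper deals with exactly this issue by appealing to a more general maximum principle for radial Hamiltonians (Proposition~4.1 in \cite{BegOverview-Wendl}) rather than the linear version. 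Your closing sentence shows you sensed this tension; the fix is simply to cite the stronger maximum principle, or to rerun the computation in Lemma~\ref{thm:maxprinc} with $H_{s,t}(x,r)=h_s(r)$ and observe that $\Delta g\geqslant \partial_s h_s'(g)\cdot g\geqslant 0$ when $h_s'$ is nondecreasing in $s$.
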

\begin{proof}
Let $(H,J)$ be regular Floer data for $(\phi,a_1).$ Assume that conditions \eqref{HamLin}, \eqref{JCondInfComp} and \eqref{JCondInfDel} hold on $M\times (r_0,\infty).$ Let $\chi: (r_0,\infty)\to [0,1]$ be a smooth function equal to 0 near $r_0$ and 1 near $\infty,$ and such that $\dot{\chi}\in [0,1].$ Consider the Hamiltonian $\widetilde{H}: \mathbb{R}\times\hat{H}\to\mathbb{R}$ defined by
\[\widetilde{H}_t(x,r):=a_1+(a_2-a_1)\chi(r),\]
for $(x,r)\in M\times(r_0,\infty),$ and $\widetilde{H}=H$ on $\hat{W}-M\times(r_0,\infty).$ The function $r\circ u$ satisfies the maximum principle, where $u$ is a solution of the Floer equation corresponding to the pair $(\widetilde{H},J)$ (see Proposition~4.1 in \cite{BegOverview-Wendl}). Moreover, there are no elements of $\P_{\phi,\widetilde{H}}$ intersecting $M\times (r_0,\infty).$ This implies that all ingredients for the definition of $HF_\ast(\phi,\widetilde{H},J,\theta)$ are contained in the compact set $\hat{W}-M\times (r_0,\infty).$ However, $H$ and $\widetilde{H}$ coincide there. Hence, the chain complexes $CF_\ast(\phi,H,J,\theta)$ and $CF_\ast(\phi,\widetilde{H},J,\theta)$ are identical. This finishes the proof.
\end{proof}

\begin{remark}\label{rmk:KnownSlopes}
In case $\phi=\OP{id},$ we can identify the groups $HF(\phi,a),$ for certain slopes, with known groups. By construction, $HF(\OP{id,\infty})$ is equal to the symplectic homology of $W$ with $\mathbb{Z}_2$ coefficients
\[HF(\OP{id},\infty)= SH(W;\mathbb{Z}_2).\]
Since the Floer homology for $C^2$-small Hamiltonian reduces to Morse homology, we get
\[HF_{\ast}(\OP{id},\varepsilon)\cong H_{\ast+n}(W,M;\mathbb{Z}_2)\quad \text{and}\quad HF_{\ast}(\OP{id},-\varepsilon)\cong H_{\ast+n}(W;\mathbb{Z}_2)\]
for $\varepsilon>0$ small enough. Also
\[HF_\ast(\OP{id},\varepsilon,\hcl)=0\]
for $\hcl\in\pi_0\Omega_\phi$ non-trivial and $\abs{\varepsilon}$ small.
\end{remark}

\subsection{Naturality}\label{sec:Naturality}
Let $\{\psi_t\}_{t\in\mathbb{R}}$ be an isotopy of $\hat{W},$ let $H:\mathbb{R}\times\hat{W}\to \mathbb{R}:(t,x)\mapsto H_t(x)$ be a smooth function, and let $\{J_t\}_{t\in\mathbb{R}}$ be a family of almost complex structures on $\hat{W}.$ We define $\psi^\ast H$ to be the smooth function
\[\psi^\ast H\: : \: \mathbb{R}\times\hat{W}\to \mathbb{R}\::\: (t,x)\mapsto H(t,\psi_t(x)),\]
and $\psi^\ast J$ to be the family $\{\psi^\ast_t J_t\}_{t\in\mathbb{R}}.$
Let $\phi:\hat{W}\to\hat{W}$ be a diffeomorphism. Assume that the isotopy $\psi$ satisfies
\[\psi_t\circ\phi\circ\psi_1=\phi\circ\psi_{t+1}.\]
Then the map
\[\Omega_\phi\to \Omega_{\phi\circ\psi_1}\::\:\gamma(\cdot)\mapsto \psi_{\cdot}^{-1}\circ\gamma(\cdot)=:\psi^\ast\gamma\]
is well defined. It induces the bijection
\[\pi_0\Omega_\phi\to\pi_0\Omega_{\phi\circ\psi_1}\::\:\hcl\mapsto \psi^\ast\hcl. \]
\begin{lemma}\label{thm:Nat}
Let $\phi\in\OP{Symp}_c(W,\lambda/d),$ let $K_t:\hat{W}\to\mathbb{R}$ be a Hamiltonian that is linear near infinity, satisfies condition \eqref{HamTwist}, and such that $\psi_1^K\in\OP{Symp}_c(W,\lambda/d)$, let $\hcl\in\pi_0\Omega_\phi,$ and let $(H,J)$ be regular Floer data for $\phi.$ Then the Hamiltonian $\left(\psi^K\right)^\ast(H-K)$ and the almost complex structure $\left(\psi^K\right)^\ast J$ are regular Floer data for $\phi\circ \psi^K_1.$ Moreover, the homomorphism
\[\N(K):CF(\phi,H,J,\hcl)\to CF\left(\phi\circ \psi^K_1, \left(\psi^K\right)^\ast(H-K),\left(\psi^K\right)^\ast J,\left(\psi^K\right)^\ast\hcl\right)\]
defined on generators by
\begin{equation}\label{NatEq:1}
\gamma \mapsto  \left(\psi^K \right)^\ast\gamma 
\end{equation}
is an isomorphism of chain complexes. Note that this isomorphism does not preserve the grading. However, the relative index is preserved.
\end{lemma}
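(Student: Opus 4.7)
The plan is to check that the change of variables $u(s,t) \mapsto \tilde u(s,t) := (\psi_t^K)^{-1}(u(s,t))$ intertwines the two Floer setups, then to read off all the claimed properties from this correspondence. The starting point is the conjugation identity
\[
\psi^K_t \circ \phi \circ \psi^K_1 = \phi \circ \psi^K_{t+1},
\]
obtained by differentiating in $t$ and using \eqref{HamTwist} for $K$ together with the standard rule $d\phi \circ X_{F \circ \phi} = X_F \circ \phi$ for symplectic $\phi$. This identity ensures that $\gamma \mapsto (\psi^K)^*\gamma$ is a well-defined bijection $\Omega_\phi \to \Omega_{\phi\circ\psi^K_1}$, descending to the stated bijection on $\pi_0$. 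The same rule also yields the flow factorization $\psi^H_t = \psi^K_t \circ \psi^{(\psi^K)^*(H-K)}_t$; composing with $\phi$ then sets up a bijection between fixed points of $\phi \circ \psi^H_1$ and those of $(\phi\circ\psi^K_1)\circ \psi^{(\psi^K)^*(H-K)}_1$, through which nondegeneracy transfers. The twist conditions \eqref{HamTwist} and \eqref{JTwist} for the new data follow from those for $H,J,K$ combined with the conjugation identity, and the asymptotic conditions \eqref{HamLin}--\eqref{JCondInfDel} are preserved because near infinity $\psi^K_t$ is the Reeb flow of a constant multiple of $R^\beta$ (as $K$ is linear there).

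The core computation is the Floer equation itself. A direct manipulation using the conjugation rule twice gives
\[
\partial_t \tilde u - X_{(\psi^K)^*(H-K)_t}(\tilde u) = d(\psi^K_t)^{-1}\bigl(\partial_t u - X_{H_t}(u)\bigr), \qquad \partial_s \tilde u = d(\psi^K_t)^{-1}(\partial_s u),
\]
and $(\psi^K)^*J_t$ acts on $d(\psi^K_t)^{-1}$-images as the $d(\psi^K_t)^{-1}$-conjugate of $J_t$; hence \eqref{FloerEq} for $u$ transforms into the Floer equation for $\tilde u$ with the new data, and \eqref{eq:percond} likewise transports to the corresponding periodicity condition for $\phi \circ \psi^K_1$. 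Since the linearized operators are conjugate by $d(\psi^K_t)^{-1}$, regularity transfers, and $\N(K)$ induces a bijection between the relevant moduli spaces, hence a chain isomorphism with inverse given by pushing forward by $\psi^K_t$. The most delicate part of the whole argument is this bookkeeping: three separate pullbacks (of the Hamiltonian, the almost complex structure, and the reference loop) must cancel exactly, and it is easy to drop a sign or misplace a conjugation.

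For the grading, I would transport the symplectic trivialization $T_{s,t}$ of Definition~\ref{RMasl} to $\tilde T_{s,t} := d(\psi^K_t)^{-1} \circ T_{s,t}$; the required consistency $(\phi\circ\psi^K_1)_* \tilde T_{s,t+1} = \tilde T_{s,t}$ is a direct consequence of the conjugation identity of the first paragraph. A short computation using the flow factorization shows that the symplectic paths $\tilde \Psi^i$ associated to the transformed data coincide with the original paths $\Psi^i$, so the Conley--Zehnder indices and hence the relative Maslov index are unchanged. The absolute grading depends on an independently chosen base loop on each side, which is why $\N(K)$ need not preserve it.
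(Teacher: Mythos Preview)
Your proposal is correct and follows essentially the same route as the paper: both establish the conjugation identity $\psi^K_t\circ\phi\circ\psi^K_1=\phi\circ\psi^K_{t+1}$, deduce from it the twist conditions and the bijection on twisted loops, use the linearity of $K$ at infinity to see that $\psi^K_t$ acts there as (a reparametrized) Reeb flow preserving $\xi^\beta,R^\beta,r$, and then verify that $u\mapsto(\psi^K_t)^{-1}\circ u$ intertwines the two Floer equations and carries trivializations so that $\mu(u)=\mu((\psi^K)^\ast u)$.

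The one point the paper makes explicit that you leave implicit is the equality of minimal Chern numbers $N_\hcl=N_{(\psi^K)^\ast\hcl}$, proved via the diffeomorphism of mapping tori $\hat{W}_{\phi\circ\psi^K_1}\to\hat{W}_\phi$, $[(x,t)]\mapsto[(\psi^K_t(x),t)]$, which identifies the vertical tangent bundles and the relevant $\pi_1$'s. Without this, the statement that the relative index is preserved is not yet well-posed, since the two indices a priori live in different cyclic groups $\mathbb{Z}_{2N_\hcl}$ and $\mathbb{Z}_{2N_{(\psi^K)^\ast\hcl}}$; your integer identity $\mu(u)=\mu((\psi^K)^\ast u)$ does give this once you observe it also applies to loops in $\Omega_\phi$, but it is worth saying.
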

\begin{proof}
Since $K$ is twisted by $\phi,$ the Hamiltonian  isotopy $\psi_t^K$ satisfies $ \psi^K_t\circ\phi\circ\psi^K_1=\phi\circ \psi_{t+1}^K.$ This further implies that $\left(\psi^K\right)^\ast (H-K)=:G$ and $\left(\psi^K\right)^\ast J=:\widetilde{J}$ are twisted by $\phi\circ\psi_1^K.$ Near infinity, $\psi_t^K$ is equal to $(x,r)\mapsto (\sigma^\beta_{at}(x),r)$ for some real number $a.$ Hence, $\psi_t^K$ preserves $\xi^\beta,R^\beta,$ and $r.$ Therefore, the conditions on $G$ and $\widetilde{J}$ near infinity are met. It is easy to check that $\gamma$ is an element of $\P_{\phi,H}$ if, and only if, $\left(\psi^K\right)^\ast \gamma$ is an element of $\P_{\phi\circ\psi^K_1,G}.$ Moreover, $u$ is a solution of the Floer equation~\eqref{FloerEq} if, and only if, $(s,t)\mapsto \left(\psi_t^K\right)^{-1} \circ u(s,t)$ is a solution of the Floer equation
\[\partial v +\widetilde{J}_t(v)\left(\partial_t v-X_{G_t}(v) \right)=0.\]
Now, we show that both complexes $CF(\phi,H,J,\hcl)$ and $CF\left(\phi\circ \psi^K_1, G,\widetilde{J},\left(\psi^K\right)^\ast\hcl\right)$ are graded by the same group, i.e. we show $N_\hcl= N_{\left(\psi^K\right)^\ast\hcl}.$ The map
\[\hat{W}_{\phi\circ\psi^K_1}\to \hat{W}_\phi\::\: [(x,t)]\mapsto [(\psi^K_t(x),t)]\]
induces a symplectic vector bundle isomorphism $T^V\hat{W}_{\phi\circ\psi^K_1}\to T^V\hat{W}_\phi.$ It also induces a bijection between $\pi_1(\Omega_{\phi\circ\psi_1^K},\left(\psi^K\right)^\ast\hcl)$ and $\pi_1(\Omega_\phi,\hcl).$ Hence, the groups 
\[\left\langle c_1(T^V\hat{W}_{\phi\circ\psi^K_t}), \pi_1(\Omega_{\phi\circ\psi_1^K},\left(\psi^K\right)^\ast\hcl)\right\rangle = \mathbb{Z}N_{\left(\psi^K\right)^\ast\hcl} \quad\text{and}\quad \left\langle c_1(T^V\hat{W}_{\phi}), \pi_1(\Omega_{\phi},\hcl)\right\rangle = \mathbb{Z}N_{\hcl}\] are identical.

Finally, let $\gamma_0,\gamma_1\in \P_{\phi,H}^\hcl$ and let $u:\mathbb{R}\times\mathbb{R}\to \hat{W}$ be a homotopy between them as in Definition~\ref{RMasl}. Then $\left(\psi^K\right)^\ast u:(s,t)\mapsto \left(\psi^K_t\right)^{-1}\circ u(s,t)$ is a homotopy between $\left(\psi^K\right)^\ast\gamma_0,\left(\psi^H\right)^\ast\gamma_1\in \P_{\phi,H}^{(\psi^K)^\ast\hcl},$ and
\[\mu\left(u\right)=\mu\left(\left(\psi^K\right)^\ast u\right).\]
Therefore $\mu(\gamma_0,\gamma_1)=\mu\left( \left(\psi^K\right)^\ast\gamma_0,\left(\psi^H\right)^\ast\gamma_1\right),$ and the proof is finished.
\end{proof}
\begin{remark}
The isomorphism $\N(K)$ from Lemma~\ref{thm:Nat} induces an isomorphism on the homology level. That isomorphism will be denoted the same, i.e. by 
\[\N(K):HF(\phi,H,J,\hcl)\to HF\left(\phi\circ \psi^K_1, \left(\psi^K\right)^\ast(H-K),\left(\psi^K\right)^\ast J,\left(\psi^K\right)^\ast\hcl\right).\]
\end{remark}

\begin{lemma}\label{thm:NPhiIntertwined}
Let $\phi\in\OP{Symp}_c(W,\lambda/d),$ let $(H^\alpha,J^\alpha),(H^\beta,J^\beta)$ be regular Floer data for $\phi$ with the slope of $H^\beta$ greater than or equal to the slope of $H^\alpha,$ let $\hcl\in\pi_0\Omega_\phi,$ and let $K$ be a Hamiltonian satisfying conditions \eqref{HamTwist} and \eqref{HamLin} such that $\psi_1^K\in\OP{Symp}_c(W,\lambda/d)$. Then the following diagram commutes
\begin{center}
\begin{tikzcd}
HF(\phi,H^\alpha, J^\alpha,\hcl)\arrow{r}{\Phi^{\beta\alpha}}\arrow{d}{\N(K)} & HF(\phi,H^\beta, J^\beta,\hcl)\arrow{d}{\N(K)}\\
HF\left( \phi\circ\psi_1^K,\cdots\right) \arrow{r}{\Phi^{\beta\alpha}}& HF \left(\phi\circ\psi_1^K,\cdots \right). 
\end{tikzcd}
\end{center}
\end{lemma}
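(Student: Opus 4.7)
The plan is to prove commutativity at the chain level for a specific choice of continuation data, then invoke independence of $\Phi^{\beta\alpha}$ from that choice. Fix generic continuation data $(H_s, G_s, J_s)$ from $(H^\alpha, J^\alpha)$ to $(H^\beta, J^\beta)$ defining the top continuation map, and set
$$\widetilde{H}_{s,t} := (\psi^K_t)^\ast (H_{s,t} - K_t), \qquad \widetilde{G}_{s,t} := (\psi^K_t)^\ast G_{s,t}, \qquad \widetilde{J}_{s,t} := (\psi^K_t)^\ast J_{s,t}.$$

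The first step is to check that $(\widetilde{H}_s, \widetilde{G}_s, \widetilde{J}_s)$ is valid continuation data between the pulled-back Floer data $(\psi^K)^\ast(H^\alpha - K, J^\alpha)$ and $(\psi^K)^\ast(H^\beta - K, J^\beta)$. Twisting by $\phi \circ \psi^K_1$ follows from $\psi^K_t \circ \phi \circ \psi^K_1 = \phi \circ \psi^K_{t+1}$ combined with the $\phi$-twisting of $H_s, G_s, J_s, K$, exactly as in Lemma~\ref{thm:Nat}. Near infinity, where $\psi^K_t(x,r) = (\sigma^\beta_{bt}(x), r)$ for $b$ the slope of $K$, one verifies that $\widetilde{H}_{s,t}(x,r)$ is linear in $r$ with $s$-monotonic slope $a(s,t) - b$, that $\widetilde{G}_{s,t} = 0$, and that $\widetilde{J}_{s,t}$ satisfies \eqref{JCondInfComp} and \eqref{JCondInfDel}, using that the Reeb flow preserves $\partial_r$, $R^\beta$, and $\xi^\beta$. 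The $s$-independent tails match the pulled-back Floer data by construction.

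The key step is the trajectory correspondence: $u:\mathbb{R}^2 \to \hat W$ solves \eqref{FloerEqS} with data $(H_s, G_s, J_s)$ and periodicity $\phi \circ u(s, t+1) = u(s,t)$ if and only if $v(s,t) := (\psi^K_t)^{-1} \circ u(s,t)$ solves \eqref{FloerEqS} with $(\widetilde{H}_s, \widetilde{G}_s, \widetilde{J}_s)$ and periodicity $(\phi \circ \psi^K_1) \circ v(s, t+1) = v(s,t)$. The computation uses $\partial_t u = X_{K_t}(u) + d\psi^K_t(\partial_t v)$ -- the $X_{K_t}$ contribution being the source of the $-K_t$ shift in $\widetilde{H}$ -- together with the fact that $\psi^K_t$ is a symplectomorphism, so that $d(\psi^K_t)^{-1}$ converts $X_F$-terms into $X_{(\psi^K_t)^\ast F}$-terms. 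Asymptotics correspond under $\gamma^\pm \leftrightarrow (\psi^K)^\ast \gamma^\pm$, and the bijection $u \mapsto v$ intertwines linearized Floer operators, so transversality on the top is transferred to the bottom and the moduli-space counts agree:
$$n\bigl(\phi, \{H_s, G_s, J_s\}, \gamma^\alpha, \gamma^\beta\bigr) = n\bigl(\phi \circ \psi^K_1, \{\widetilde{H}_s, \widetilde{G}_s, \widetilde{J}_s\}, (\psi^K)^\ast \gamma^\alpha, (\psi^K)^\ast \gamma^\beta\bigr).$$

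Combined with the definition of $\N(K)$ on generators, this yields the chain-level identity $\N(K) \circ \Phi^{\beta\alpha}(H_s, G_s, J_s) = \Phi^{\beta\alpha}(\widetilde{H}_s, \widetilde{G}_s, \widetilde{J}_s) \circ \N(K)$. Passing to homology and invoking independence of the continuation map from the continuation data (Theorem~\ref{ModuliS2}) gives the commutative diagram. The main obstacle is the bookkeeping around the $-K$ shift in $\widetilde{H}$: it arises precisely because $(\psi^K_t)^{-1}$ does not commute with $\partial_t$, and the resulting $-X_{K_t}$ term must be absorbed into the Hamiltonian part of the Floer equation. Once this is tracked, the remainder of the argument is a direct extension of the computations in Lemma~\ref{thm:Nat}.
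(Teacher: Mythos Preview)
Your proof is correct and follows essentially the same approach as the paper's own proof, which simply states that the argument is analogous to Lemma~\ref{thm:Nat} with the key ingredient being the bijection $u \leftrightarrow (\psi^K)^\ast u$ between solutions of the relevant $s$-dependent Floer equations. You have carefully expanded this sketch, in particular making explicit the form of the pulled-back continuation data and the transfer of transversality, but the underlying idea is identical.
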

\begin{proof}
The proof is analogous to the proof of Lemma~\ref{thm:Nat}. The key ingredient is the following fact. The correspondence $u\leftrightarrow (\psi^K)^\ast u$ is a bijection between the sets of solutions  of the appropriate $s$-dependent Floer equations. Here, $(\psi^K)^\ast u(s,t):= (\psi^K_t)^{-1}\circ u(s,t).$
\end{proof}
\begin{proposition}\label{prop:nat}
Let $\phi\in\OP{Symp}_c(W,\lambda/d),$ let $a\in(-\infty,\infty]$ and let $K_t:\hat{W}\to\mathbb{R}$ be a Hamiltonian such that $K_t\circ\phi=K_{t+1},$  $K_t(x,r)=rk,\:k\in\mathbb{R}$ near infinity, and such that $\psi_1^K\in\OP{Symp}_c(W,\lambda/d)$.Then $\N(K)$ induces an isomorphism (denoted the same)
\begin{equation}\N(K)\::\:HF(\phi,a)\to HF(\phi\circ\psi_1^K,a-k).
\end{equation}
\end{proposition}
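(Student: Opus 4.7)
The plan is to build $\N(K)$ on $HF(\phi, a)$ at each finite admissible slope using Lemma~\ref{thm:Nat}, verify independence of choices via Lemma~\ref{thm:NPhiIntertwined}, and pass to the direct limit for $a = \infty$.

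Take a finite admissible slope $a$ and, using Lemma~\ref{thm:VarSl}, replace $a$ by a nearby value if necessary so that both $a$ and $a - k$ are admissible; this is possible because the inadmissible slopes form a discrete subset of $\mathbb{R}$. Choose regular Floer data $(H, J)$ for $(\phi, a)$. Near infinity, $H_t(x, r) = ar$ and $K_t(x, r) = kr$, so $(H - K)_t(x, r) = (a - k) r$. Since $\psi_t^K$ agrees near infinity with $(x, r) \mapsto (\sigma^\beta_{kt}(x), r)$, it preserves $r$, $\xi^\beta$ and $R^\beta$; hence $(\psi^K)^\ast(H - K)$ still has linear slope $a - k$ near infinity and $(\psi^K)^\ast J$ still satisfies conditions~\eqref{JCondInfComp} and \eqref{JCondInfDel}. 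Lemma~\ref{thm:Nat} then provides a chain isomorphism
\[\N(K)\::\:CF(\phi, H, J, \hcl) \to CF\left(\phi \circ \psi_1^K,\; (\psi^K)^\ast(H - K),\; (\psi^K)^\ast J,\; (\psi^K)^\ast \hcl\right)\]
for each $\hcl \in \pi_0 \Omega_\phi$, and hence an isomorphism on Floer homology. Summing over $\hcl$ and applying the bijection $\hcl \mapsto (\psi^K)^\ast \hcl$ from Section~\ref{sec:Naturality} to match summands yields an isomorphism $\N(K) : HF(\phi, a) \to HF(\phi \circ \psi_1^K, a - k)$. Independence of the choice of regular Floer data follows from Lemma~\ref{thm:NPhiIntertwined}, which shows that $\N(K)$ is intertwined with the canonical continuation identifications used to define $HF(\phi, a)$ intrinsically.

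For $a = \infty$, Lemma~\ref{thm:NPhiIntertwined} applied to admissible slopes $a_1 \leq a_2$ (with $a_i - k$ also admissible) further shows that $\N(K)$ intertwines the continuation map $HF(\phi, a_1) \to HF(\phi, a_2)$ with the continuation map $HF(\phi \circ \psi_1^K, a_1 - k) \to HF(\phi \circ \psi_1^K, a_2 - k)$. Thus $\N(K)$ becomes a morphism of directed systems, with cofinal indexing sets on both sides, consisting of isomorphisms at every stage. Passing to the direct limit produces the desired isomorphism $\N(K) : HF(\phi, \infty) \to HF(\phi \circ \psi_1^K, \infty)$, consistent with the convention $\infty - k = \infty$. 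The main obstacle here is essentially bookkeeping: tracking the action of $\N(K)$ on components of $\Omega_\phi$, ensuring the shifted slope $a - k$ remains admissible, and verifying that the chain-level naturality from Lemma~\ref{thm:Nat} upgrades to a well-defined map on $HF(\phi, a)$ — all of which are handled by the preceding lemmas.
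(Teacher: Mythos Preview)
Your proof is correct and follows the same approach as the paper's one-line proof, which simply cites Lemma~\ref{thm:NPhiIntertwined}; you have just spelled out the details it leaves implicit. One minor point: the step of replacing $a$ by a nearby admissible value is unnecessary (and the claim that inadmissible slopes form a discrete set need not hold in general), because the hypothesis $\psi_1^K\in\OP{Symp}_c(W,\lambda/d)$ forces $\sigma^\beta_{k}=\OP{id}$ on $M$, so if $a$ is admissible then $a-k$ is automatically admissible as well.
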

\begin{proof}
The proposition follows from Lemma~\ref{thm:NPhiIntertwined}.
\end{proof}
Let $H_t$ and $K_t$ be Hamiltonians $\hat{W}\to\mathbb{R}$ whose Hamiltonian isotopies are well defined (for all times). We denote the Hamiltonians $-H_t\circ\psi^H_t$ and $H_t+K_t\circ\left(\psi_t^H\right)^{-1}$ by $\overline{H}$ and $H\#K,$ \label{notation:sharp} respectively.  Note that 
\[\psi_t^{\overline{H}}=\left(\psi_t^H\right)^{-1}\quad\text{and}\quad \psi_t^{H\#K}=\psi^H_t\circ\psi_t^K.\]
\begin{remark}\label{NatComp}
Let $\phi\in\OP{Symp}_c(W,\lambda/d),$ let $H_t$ and $K_t$ be Hamiltonians linear near infinity. Assume that $(\phi, H)$ and $\left(\phi\circ \psi^H_1,K\right)$ satisfy condition \eqref{HamTwist} and that $\psi^H_1,\psi^K_1\in \OP{Symp}_c(W,\lambda/d)$. Then $\left( \phi\circ\psi_1^H,\overline{H}\right)$ and $(\phi,H\# K)$ satisfy condition \eqref{HamTwist} as well, $\overline{H}$ and $ H\# K$ are linear near infinity, and their time-1 maps are in the group $\OP{Symp}_c(W,\lambda/d).$ Moreover, 
\begin{equation}
\N(K)\circ\N(H)=\N(H\# K),\quad\N(H)^{-1}=\N(\overline{H}).
\end{equation}
\end{remark}

\subsection{Invariance under isotopies}\label{sec:InvIso}
The goal of this section is to associate an isomorphism 
\[I(\{\phi_t\}):HF(\phi_0,a)\to HF(\phi_1,a)\]
to a compactly supported isotopy (through $\OP{Symp}_c(W,\lambda/d)$) $\{\phi_t\}_{t\in[0,1]}$ between $\phi_0$ and $\phi_1\in \OP{Symp}_c(W,\lambda/d).$ Without loss of generality, we may assume $\phi_t=\phi_0$ for $t$ near $0$ and $\phi_t=\phi_1$ for $t$ near 1. By Lemma~\ref{thm:PathExSymp} below, $\phi_t=\psi^K_t\circ\phi_0$ for some compactly supported Hamiltonian $K:[0,1]\times\hat{W}\to \mathbb{R}$ (note that $K_t=0$ for $t$ near 0 or 1).

\begin{definition}
Let $\{\phi_t\}$ and $K$ be as above. Consider the Hamiltonian $K^0:\mathbb{R}\times\hat{W}\to \mathbb{R}$ that satisfies $K^0_t\circ\phi_0=K^0_{t+1}$ and
\[K^0_t=K_t\circ\phi_0\]
for $t\in[0,1].$ This Hamiltonian generates the isotopy $\phi_0^{-1}\circ\phi_t.$ We define
\[I(\{\phi_t\}):=\N(K^0)\::\:HF(\phi_0,a)\to HF(\phi_1,a).\]
\end{definition}

\begin{lemma}\label{thm:PathExSymp}
Any path $\phi_t\in \OP{Symp}_c(W,\lambda/d)$ is a Hamiltonian isotopy generated by a Hamiltonian compactly supported in $W-M.$
\end{lemma}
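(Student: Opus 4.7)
The plan is to exhibit the generating Hamiltonian explicitly. Let $X_t := (\partial_t \phi_t)\circ \phi_t^{-1}$ be the time-dependent vector field on $\hat{W}$ whose flow from time $0$ is $\phi_t \circ \phi_0^{-1}$. Because $\phi_t$ is symplectic for every $t$, $X_t$ is a symplectic vector field, so $\iota_{X_t}\omega$ is closed; the task is to upgrade this to global exactness with a primitive that is compactly supported in $W-M$.

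To produce such a primitive, I would differentiate the exactness identity $\phi_t^*\lambda - \lambda = dF_{\phi_t}$ with respect to $t$. Using $\frac{d}{dt}(\phi_t^*\lambda) = \phi_t^*(L_{X_t}\lambda)$ together with Cartan's formula $L_{X_t}\lambda = \iota_{X_t}\omega + d(\lambda(X_t))$, and then pulling back by $\phi_t^{-1}$ and using $\phi_t^*\omega = \omega$, one obtains
\[\iota_{X_t}\omega = dK_t, \qquad K_t := (\partial_t F_{\phi_t})\circ \phi_t^{-1} - \lambda(X_t).\]
In the sign convention $dH = \omega(X_H,\cdot)$ of the paper this reads $X_t = X_{K_t}$, so $\phi_t\circ\phi_0^{-1}=\psi_t^K$, i.e.\ $\phi_t = \psi_t^K\circ\phi_0$. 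Smoothness of $K$ in $t$ follows from the smoothness of $t\mapsto \phi_t$ and from the characterisation of $F_{\phi_t}$ as the \emph{unique} compactly supported primitive of $\phi_t^*\lambda - \lambda$, which ensures $F_{\phi_t}$ inherits the smooth $t$-dependence of $\phi_t$.

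It remains to verify the support condition. Since the path $\{\phi_t\}$ lies in $\OP{Symp}_c(W,\lambda/d)$, one may assume there is a single compact set $C\subset W\setminus M$ such that $\phi_t = \OP{id}$ on $\hat{W}\setminus C$ for every $t$. Outside $C$ one then has $F_{\phi_t}\equiv 0$ (by uniqueness of the compactly supported primitive) and $X_t\equiv 0$ (because $\phi_t$ is constant in $t$ there), so $K_t\equiv 0$ outside $C$; hence $K$ is compactly supported in $W-M$, as required.

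The chief obstacle I expect is bookkeeping: one has to be careful that $F_{\phi_t}$ really depends smoothly on $t$ and that the primitive produced is genuinely compactly supported rather than only vanishing near infinity. Both points are resolved by the same uniqueness observation for $F_{\phi_t}$, so once that is invoked the formula for $K_t$ above does the rest.
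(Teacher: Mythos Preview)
Your proof is correct and follows essentially the same approach as the paper: differentiate the exactness relation $\phi_t^\ast\lambda-\lambda=dF_{\phi_t}$ in $t$, apply Cartan's formula, and read off the Hamiltonian $K_t=(\partial_t F_{\phi_t})\circ\phi_t^{-1}-\lambda(X_t)$. Your treatment is in fact more thorough, since you also verify the smooth $t$-dependence of $F_{\phi_t}$ and the compact support of $K_t$, both of which the paper leaves implicit.
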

\begin{proof}
Let $X_t$ be a vector field of $\phi_t.$ For simplicity, we denote $F_{\phi_t}$ by $F_t.$ By applying Cartan's formula on
\[\phi_t^\ast\lambda-\lambda = d F_t,\]
we get
\[\phi^\ast_t(X_t\lrcorner d\lambda+d(\lambda(X_t)))=d(\partial_tF_t).\]
Hence,
\[X_t\lrcorner \omega=d\left( \partial_tF_t\circ\phi_t^{-1}-\lambda(X_t)\right),\]
and the proof is finished.
\end{proof}
\begin{theorem}\label{IsoInv}
Let $\phi_0,\phi_1$ be exact symplectomorphisms $\hat{W}\to\hat{W}$ in the same connected component of $\OP{Symp}_c(W,\lambda/d).$ Then, for each compactly supported isotopy through $\OP{Symp}_c(W,\lambda/d)$ between $\phi_0$ and $\phi_1$ and for every $-\infty<a\leqslant \infty,$ there exists an isomorphism
\begin{equation}\label{IsoInvEq:1} HF(\phi_0, a)\to HF(\phi_1, a). 
\end{equation}
The isomorphism depends only on the homotopy class of the isotopies between $\phi_0$ and $\phi_1.$ 
\end{theorem}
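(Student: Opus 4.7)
The plan is to apply the naturality construction packaged as Proposition~\ref{prop:nat}. First I would invoke Lemma~\ref{thm:PathExSymp} to convert the given compactly supported isotopy $\{\phi_t\}$ into a compactly supported Hamiltonian $K\colon[0,1]\times\hat W\to\mathbb R$ with $\phi_t=\psi_t^K\circ\phi_0$. After a harmless reparametrization making $K_t=0$ for $t$ near $0$ and $1$, I would extend $K$ to a $\phi_0$-twisted Hamiltonian $K^0\colon\mathbb R\times\hat W\to\mathbb R$ by setting $K^0_t:=K_t\circ\phi_0$ on $[0,1]$ and propagating via $K^0_{t+1}=K^0_t\circ\phi_0$. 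A short computation using $\phi_0^\ast X_{K_t}=X_{K_t\circ\phi_0}$ shows that $\psi_t^{K^0}$ coincides with $\phi_0^{-1}\circ\phi_t$ on $[0,1]$, so $\psi_1^{K^0}=\phi_0^{-1}\circ\phi_1$. Since $K^0$ is compactly supported, it is linear of slope $k=0$ near infinity, and Proposition~\ref{prop:nat} delivers
\[
\N(K^0)\colon HF(\phi_0,a)\xrightarrow{\;\cong\;} HF(\phi_0\circ\psi_1^{K^0},a)=HF(\phi_1,a).
\]

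The real content of the theorem is that this isomorphism depends only on the homotopy class, rel endpoints, of the chosen isotopy. To establish this, I would fix two isotopies $\{\phi_t^0\}$ and $\{\phi_t^1\}$ joined by a homotopy $\{\phi_t^s\}_{(s,t)\in[0,1]^2}$ with $\phi_0^s\equiv\phi_0$ and $\phi_1^s\equiv\phi_1$, producing a smooth 1-parameter family of compactly supported Hamiltonians $K^{0,s}$ and corresponding chain isomorphisms $\N(K^{0,s})$. The aim is then to show that $\N(K^{0,0})$ and $\N(K^{0,1})$ induce the same map on homology, via a standard parametric cobordism argument: fix regular Floer data $(H,J)$ for $\phi_0$; by the proof of Lemma~\ref{thm:Nat}, the pulled-back data $\bigl((\psi^{K^{0,s}})^\ast(H-K^{0,s}),(\psi^{K^{0,s}})^\ast J\bigr)$ are Floer data for $\phi_1$ for each $s$. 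The union over $s\in[0,1]$ of the associated moduli spaces, cut out by a generic family of Floer equations, is a 1-manifold whose boundary count yields a chain homotopy between the two naturality chain maps.

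The main obstacle will be verifying the analytical inputs for this parametric moduli problem. Compactness is not automatic: one must keep the parametric trajectories in a uniform compact subset of $\hat W$ and rule out energy blow-up. However, because each $K^{0,s}$ is compactly supported, the pulled-back Hamiltonians are linear of the same admissible slope $a$ outside a uniform compact region, so Lemma~\ref{thm:maxprinclin} and the energy identity of Lemma~\ref{EId} supply uniform estimates. Transversality for the parametric moduli space is obtained in the usual way by a generic choice of $s$-dependent perturbation localized away from the region where the twist conditions~\eqref{HamTwist}, \eqref{JTwist} and the conditions~\eqref{HamLin}--\eqref{JCondInfDel} must be preserved. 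Once these points are in place, the parametric cobordism closes and yields the required homotopy invariance; functoriality with respect to concatenation of isotopies follows from the composition rule $\N(K)\circ\N(H)=\N(H\# K)$ of Remark~\ref{NatComp}, which together with the above completes the proof.
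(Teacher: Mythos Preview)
Your construction of the isomorphism via $\N(K^0)$ matches the paper exactly, including the passage through Lemma~\ref{thm:PathExSymp} and Proposition~\ref{prop:nat}. The difference is in the homotopy-invariance step, and there your parametric-cobordism outline has a genuine gap. The naturality maps $\N(K^{0,s})$ are bijections on generators induced by the diffeomorphisms $\psi_t^{K^{0,s}}$ (Lemma~\ref{thm:Nat}); they are \emph{not} defined by trajectory counts. Consequently the ``associated moduli spaces'' for the pulled-back data $(H^s,J^s)$ are, after undoing the bijection, literally identified with the fixed moduli spaces for $(\phi_0,H,J)$, and their union over $s$ is just $[0,1]$ times that fixed space. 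This carries no information comparing $\N(K^{0,0})$ to $\N(K^{0,1})$ as maps into the single group $HF(\phi_1,a)$; what is missing is a mechanism relating the naturality bijections to the \emph{continuation} identifications of their (a priori different) targets.

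The paper supplies exactly this mechanism via Lemma~\ref{thm:ContHamLoop}. Using the composition rule of Remark~\ref{NatComp}, the discrepancy between the two isomorphisms is $\N(L)$ for a compactly supported Hamiltonian $L$ generating a \emph{contractible loop} in the compactly supported Hamiltonian group. Lemma~\ref{thm:ContHamLoop} then shows that such an $\N(L)$ coincides with the continuation map (hence with the canonical identification on homology). Its proof does have a cobordism flavour, but the right one: a two-parameter family $\psi_{s,t}$ realizing the nullhomotopy of the loop is used to pull back the \emph{trivial} $s$-independent continuation problem, thereby exhibiting $\N(L)$ itself as the count of an honest continuation moduli space with specific data. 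That reinterpretation of naturality as a continuation map is the key step absent from your sketch.
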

\begin{proof}
The existence of the isomorphism follows from the discussion above (the proof is the same as in~\cite{MR1297130} (Theorem~3.3)). Lemma~\ref{thm:ContHamLoop} below proves that the isomorphism depends only on the homotopy class of the isotopies between $\phi_0$ and $\phi_1.$ 
\end{proof}
\begin{lemma}\label{thm:ContHamLoop}
Let $\phi\in \OP{Symp}_c(W,\lambda/d),$ let $\hcl\in\pi_0\Omega_\phi,$ let $(H,J)$ be admissible Floer data for $\phi,$ and let $K_t:\hat{W}\to\mathbb{R}$ be a compactly supported Hamiltonian such that $K_t\circ\phi=K_{t+1}$ and such that it generates a loop of Hamiltonian diffeomorphisms that is contractible in the group of all Hamiltonian diffeomorphisms compactly supported in $W-M.$ Then the naturality isomorphism
\[\N(K)\::\:HF(\phi,H,J,\hcl)\to HF\left(\phi,(\psi^K)^\ast(H-K),(\psi^K)^\ast J,(\psi^K)^\ast\hcl\right)\]
coincides with the continuation morphism (which is well defined since $H$ and $(\psi^K)^\ast(H-K)$ have the same slope).  
\end{lemma}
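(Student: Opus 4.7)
The plan is to realize $\N(K)$ as a continuation morphism for a specifically chosen continuation data from $(H,J)$ to $((\psi^K)^\ast(H-K),(\psi^K)^\ast J)$. Since all continuation morphisms between the same pair of Floer data agree on homology (Theorem~\ref{ModuliS2}), this forces $\N(K)$ to equal the continuation morphism appearing in the statement.

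First, use the contractibility hypothesis to pick a smooth family $\{K^\tau\}_{\tau\in[0,1]}$ of compactly supported Hamiltonians on $\mathbb{R}\times\hat{W}$, each twisted by $\phi$ and generating a Hamiltonian loop in $W-M$, with $K^0\equiv 0$ and $K^1=K$. Fix a smooth cutoff $\rho:\mathbb{R}\to[0,1]$ equal to $0$ on $(-\infty,0]$ and to $1$ on $[1,\infty)$, set $L_{s,t}:=K^{\rho(s)}_t$, and let $\Lambda_{s,t}:=\psi^{L_{s,\cdot}}_t$ denote the resulting $t$-flow for each fixed $s$. Define $s$-dependent data
\[
H_{s,t}:=\Lambda_{s,t}^\ast(H_t-L_{s,t}),\qquad J_{s,t}:=\Lambda_{s,t}^\ast J_t,\qquad G_{s,t}:=\text{Hamiltonian with }\partial_s\Lambda_{s,t}=X_{G_{s,t}}\circ\Lambda_{s,t}.
\]
Because each $K^\tau$ is compactly supported in $W-M$, the conjugating flow $\Lambda$ is the identity near infinity and $L_{s,t}=0$ there; consequently $(H_{s,t},G_{s,t},J_{s,t})$ equals $(H,0,J)$ for $s\leq 0$, equals $((\psi^K)^\ast(H-K),0,(\psi^K)^\ast J)$ for $s\geq 1$, has constant slope $a$ throughout, and satisfies the requirements of Definition~\ref{def:ContData}.

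The key computation is that the change of variables
\[
u\longmapsto v,\qquad v(s,t):=\Lambda_{s,t}\circ u(s,t),
\]
is a bijection between solutions of the generalized Floer equation~\eqref{FloerEqS} for $(H_{s,t},G_{s,t},J_{s,t})$ and solutions of the $s$-independent Floer equation for $(H,J)$. A direct differentiation in $s$ and $t$ shows that the $-X_{G_{s,t}}$ term absorbs $\partial_s\Lambda_{s,t}$, that the shift $-L_{s,t}$ in the definition of $H_{s,t}$ absorbs the contribution of $\partial_t\Lambda_{s,t}=X_{L_{s,\cdot}}\circ\Lambda_{s,t}$, and that the pullback $J_{s,t}$ transports the complex-linear term into its $(H,J)$ analog. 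Under this bijection, a continuation solution $u$ with asymptotes $\gamma^\alpha\in\P_{\phi,H}$ at $s=-\infty$ and $\gamma^\beta\in\P_{\phi,(\psi^K)^\ast(H-K)}$ at $s=+\infty$ corresponds to a Floer cylinder $v$ for $(H,J)$ with asymptotes $\gamma^\alpha$ at $s=-\infty$ and $t\mapsto \psi^K_t\circ\gamma^\beta(t)$ at $s=+\infty$, the latter being the unique element of $\P_{\phi,H}$ whose image under $\N(K)$ is $\gamma^\beta$. Index-zero such cylinders exist only when $\gamma^\alpha=\psi^K_\cdot\circ\gamma^\beta$, equivalently $\gamma^\beta=\N(K)(\gamma^\alpha)$, in which case they are the constants and each contributes $1$ to $n(\phi,\{H_s,G_s,J_s\},\gamma^\alpha,\gamma^\beta)$. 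Hence the continuation morphism defined by $(H_{s,t},G_{s,t},J_{s,t})$ sends $\langle\gamma^\alpha\rangle$ to $\langle\N(K)(\gamma^\alpha)\rangle$, coinciding with $\N(K)$ at the chain level.

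The main technical obstacle will be the precise bookkeeping in the change-of-variables computation and the explicit identification of $G_{s,t}$ in terms of $\partial_\tau K^\tau$; this is a standard parametric Hamiltonian calculation but involves several competing terms that must match exactly. A secondary point is transversality: the constructed data need not be regular, but any generic perturbation within the contractible space of continuation data between $(H,J)$ and $((\psi^K)^\ast(H-K),(\psi^K)^\ast J)$ induces the same morphism on homology by Theorem~\ref{ModuliS2}, so the computation above still identifies that common morphism with $\N(K)$.
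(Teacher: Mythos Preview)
Your proposal is correct and follows essentially the same approach as the paper: both arguments use the contractibility hypothesis to build an $s$-parameterized family interpolating between the identity and $\psi^K$, conjugate by this family to manufacture specific continuation data, and then observe that the corresponding moduli spaces are in bijection with those of the $s$-independent data $(H,0,J)$, which consist only of constant solutions. The only cosmetic difference is that the paper parameterizes directly by a two-parameter family of diffeomorphisms $\psi_{s,t}$ (extended from $t\in[0,1]$ to $t\in\mathbb{R}$ via the twist condition $\psi_{s,t}\circ\phi=\phi\circ\psi_{s,t+1}$), whereas you parameterize by a family of generating Hamiltonians $K^\tau$ and then take their flows; your $\Lambda_{s,t}$ plays exactly the role of the paper's $\psi_{s,t}$.
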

\begin{proof}
By the assumptions, there exists a family $\psi_{s,t}:\hat{W}\to\hat{W},\:s\in\mathbb{R},\: t\in [0,1],$ of Hamiltonian diffeomorphisms compactly supported in $W-M$ such that $\psi_{s,0}=\psi_{s,1}=\OP{id},$ for all $s\in\mathbb{R},$ and such that
\[\psi_{s,t}=\left\lbrace \begin{matrix} \psi^K_{t}&\text{for } s>>0,\\ \OP{id} &\text{for } s<<0 \end{matrix}\right..\]  
We extend $\psi_{s,t}$ to an $\mathbb{R}\times\mathbb{R}$-family by requiring that it satisfies the following periodicity condition
\[\psi_{s,t}\circ\phi=\phi\circ\psi_{s,t+1}.\]
Note that $\psi_{s,t}=\psi^K_{t}$ holds for $s>>0$ after the extension as well.
Let $\gamma^\alpha,\gamma^\beta\in\P^\hcl(\phi, H)=\P(\phi, H)\cap \hcl.$ The map $u\mapsto \psi^\ast u,$ where $\psi^\ast u:(s,t)\mapsto \psi^{-1}_{s,t}\circ u(s,t),$ defines a bijection between the sets 
\[\M^0(\phi,\{H,0,J\}, \gamma^\alpha,\gamma^\beta) \quad \text{and}\quad \M^0(\phi,\{(\psi_{s,\cdot})^\ast(H-K_s), -(\psi_{s,\cdot})^\ast K_s,(\psi_{s,\cdot})^\ast J\}, \gamma^\alpha, (\psi^K)^\ast\gamma^\beta)\]
(for the definition of $\psi_{s,\cdot}^\ast(\cdots)$ see the beginning of section~\ref{sec:Naturality}). The former is empty unless $\gamma^\alpha=\gamma^\beta,$ and in that case, it consists of a single element, namely $(s,t)\mapsto \gamma^\alpha(t)$ (the reason for this is that $(H,0,J)$ is independent of $s$). Hence
\[n(\phi,\{(\psi_{s,\cdot})^\ast(H-K_s), -(\psi_{s,\cdot})^\ast K_s,(\psi_{s,\cdot})^\ast J\}, \gamma^\alpha, (\psi^K)^\ast\gamma^\beta)=\left\lbrace \begin{matrix}
1&\text{if } \gamma^\alpha=\gamma^\beta,\\ 0&\text{otherwise.}
\end{matrix}\right.\]
In other words, the continuation map associated to the continuation data
\[\{(\psi_{s,\cdot})^\ast(H-K_s), -(\psi_{s,\cdot})^\ast K_s,(\psi_{s,\cdot})^\ast J\}\]
sends the generator $\gamma$ of $CF(\phi,H,J,\hcl)$ to the generator $\left(\psi^K\right)^\ast\gamma$ of \[CF\left(\phi,(\psi^K)^\ast(H-K),(\psi^K)^\ast J,(\psi^K)^\ast\hcl\right),\] i.e. it coincides with the naturality isomorphism. This proves the lemma.
\end{proof}

\subsection{Action filtration}

In this section we define filtered Floer homology groups $HF_\ast^{<c}(\phi,a)$ for $\phi\in\OP{Symp}_c(W,\lambda/d),$ admissible slope $a\in\mathbb{R}$ and a real number $c\in\mathbb{R}.$ They will not be used in the rest of the paper.

Let $\phi,H,J,\hcl$ be as in Theorem~\ref{Moduli2}. The Action functional decreases along solutions of the Floer equation. Hence, the graded group
\[CF_\ast^{<c}(\phi,H,J,\theta)\]
generated by the elements of 
\[\left\{ \gamma\in\P^\hcl(\phi, H)\:|\: \A_{\phi,H}(\gamma)<c \right\}\]
is a subcomplex of $CF_\ast(\phi,H,J,\theta).$ Its homology is denoted by
\[HF_\ast^{<c}(\phi,H,J,\hcl).\]

Let $\phi,\hcl, H^\alpha,H^\beta,J^\alpha, J^\beta$ be as in Theorem~\ref{ModuliS}, and let $(H_s,0,J_s)$ be continuation data from $(H^\alpha,J^\alpha)$ to $(H^\beta,J^\beta).$ If $H_{s,t}(x)$ is increasing with respect to $s$ for all $t\in\mathbb{R}$ and $x\in\hat{W},$ then Lemma~\ref{EId} implies 
\[\A_{\phi, H^\alpha}(\gamma^\alpha)\leqslant \A_{\phi, H^\beta}(\gamma^\beta),\]
where $\gamma^\alpha\in \P(\phi, H^\alpha)$ and $\gamma^\beta\in \P(\phi, H^\beta)$ are such that  $\M(\phi,\{H_s,0,J_s\},\gamma^\alpha,\gamma^\beta)$ is nonempty. Therefore the continuation map
\[\Phi^{\beta\alpha}\::\:HF_\ast^{<c}(\phi,H^\alpha,J^\alpha,\hcl)\to HF_\ast^{<c}(\phi,H^\beta,J^\beta,\hcl)\]
is well defined whenever
\[H^\alpha_t(x)\leqslant H^\beta_t(x)\]
for all $t\in\mathbb{R}$ and $x\in\hat{W}.$

Given an admissible slope $a\in\mathbb{R}.$ Consider the subset $D(a)$ of all regular Floer data $(H,J)$ with the slope of $H$ equal to $a$ and such that $H_t(x)<0$ for $t\in\mathbb{R}$ and $x\in W.$ The set $D(a)$ is a directed set with preorder relation given by
\[(H,J)\leqslant (H',J')\quad \Longleftrightarrow \quad H\leqslant H'. \]

The groups $HF_\ast(\phi,H,J,\hcl)$ together with the homomorphisms $\Phi^{\beta\alpha}$ form a directed system of groups indexed by $D(a).$ We define
\begin{align*}
& HF^{<c}_\ast(\phi,a,\hcl):=\underset{(H,J)\in D(a)}{\lim_{\longrightarrow}}HF_\ast^{<c}(\phi,H,J,\hcl),\\
& HF_\ast^{<c}(\phi,a):=\bigoplus_{\hcl\in\pi_0\Omega_\phi} HF_\ast^{<c}(\phi,a,\hcl).
\end{align*}
Due to the following lemma, the naturality isomorphism is well defined for the groups $HF^{<c}(\phi,a).$

\begin{lemma}\label{lem:NatAct}
Let $H$ satisfy \eqref{HamTwist} and be equal $ar$ near infinity, and let $(\phi,G)$ be admissible. If $\gamma\in\P(\phi,G),$ then
\begin{equation}\label{NatActEq:1}
\A_{\phi\circ\psi^H_1,\left(\psi^H\right)^\ast(G-H)} \left( \left(\psi^H\right)^\ast\gamma \right)=\A_{\phi,G}(\gamma).
\end{equation}
(See section~\ref{sec:Naturality} for definition of $\left(\psi^H\right)^\ast(G-H)$ and $\left(\psi^H\right)^\ast\gamma.$)
\end{lemma}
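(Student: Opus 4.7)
The plan is to expand both sides of~\eqref{NatActEq:1} using the definition of $\A$ and to verify that the discrepancies introduced by the change of variables $\gamma \mapsto \tilde\gamma := (\psi^H)^\ast\gamma$, i.e.\ $\tilde\gamma(t) = (\psi^H_t)^{-1}\gamma(t)$, cancel in pairs. The Hamiltonian term is immediate: by the definition of $(\psi^H)^\ast(G-H)$ one has $\bigl((\psi^H)^\ast(G-H)\bigr)_t(\tilde\gamma(t)) = (G_t - H_t)(\gamma(t))$, producing an excess $+\int_0^1 H_t(\gamma)\,dt$ compared with $\A_{\phi,G}(\gamma)$. The boundary term is handled via the composition formula of Definition~\ref{def:exactsymp}: since $\psi^H_1$ is exact with primitive $S^H_1$, where $S^H_t(x) := \int_0^t \bigl(\lambda(X_{H_s}) + H_s\bigr) \circ \psi^H_s\,ds$ is obtained by integrating Cartan's formula $\mathcal{L}_{X_{H_t}}\lambda = d\bigl(\lambda(X_{H_t}) + H_t\bigr)$, one has $F_{\phi\circ\psi^H_1} = F_\phi \circ \psi^H_1 + S^H_1$, which at $\tilde\gamma(1)$ reads $F_\phi(\gamma(1)) + S^H_1(\tilde\gamma(1))$ and thus contributes an excess of $-S^H_1(\tilde\gamma(1))$ in $\A_{\phi\circ\psi^H_1,\tilde G}(\tilde\gamma)$.

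The key calculation will be the transformation of the Liouville integral. Starting from $\gamma(t) = \psi^H_t(\tilde\gamma(t))$ and differentiating in $t$, one obtains
\[\int_0^1 \gamma^\ast\lambda = \int_0^1 \lambda(X_{H_t})(\gamma)\,dt + \int_0^1 \bigl((\psi^H_t)^\ast\lambda\bigr)(\dot{\tilde\gamma})\,dt.\]
Substituting $(\psi^H_t)^\ast\lambda = \lambda + dS^H_t$ and using $dS^H_t(\dot{\tilde\gamma}) = \tfrac{d}{dt}\bigl(S^H_t(\tilde\gamma)\bigr) - (\partial_t S^H_t)(\tilde\gamma)$ together with $(\partial_t S^H_t)(\tilde\gamma) = \bigl(\lambda(X_{H_t}) + H_t\bigr)(\gamma)$, the two $\lambda(X_{H_t})(\gamma)$ contributions cancel and I arrive at
\[\int_0^1 \tilde\gamma^\ast\lambda = \int_0^1 \gamma^\ast\lambda - S^H_1(\tilde\gamma(1)) + \int_0^1 H_t(\gamma)\,dt,\]
which in $\A_{\phi\circ\psi^H_1,\tilde G}(\tilde\gamma)$ (through the $-\int \tilde\gamma^\ast\lambda$ summand) contributes an excess $+S^H_1(\tilde\gamma(1)) - \int_0^1 H_t(\gamma)\,dt$.

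Assembling the three contributions, the $\pm S^H_1(\tilde\gamma(1))$ pair cancels, and the two copies of $\int_0^1 H_t(\gamma)\,dt$ (one from the Liouville change-of-variables, one from the $G \mapsto G - H$ shift) cancel as well, leaving exactly $\A_{\phi,G}(\gamma)$. The one delicate point will be pinning down the sign of $S^H_t$ from the convention $\iota_{X_H}\omega = dH$ fixed in Section~\ref{sec:Conv}; once that sign is correct (so that $\lambda(X_H) + H$ vanishes at infinity when $H$ is linear there, making $S^H_1$ compactly supported and the composition formula for $F_{\phi\circ\psi^H_1}$ unambiguous), the rest of the argument is mechanical bookkeeping. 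I note that the hypothesis $\gamma \in \P(\phi, G)$ is not actually used in the computation—the identity is purely an algebraic statement about how $\A$ transforms under the naturality substitution, valid for every twisted loop.
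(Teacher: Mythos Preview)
Your direct computation is correct: the explicit primitive $S^H_t$ you write down is indeed $F_{\psi^H_t}$ under the paper's convention $\iota_{X_H}\omega = dH$, and the three excess terms cancel exactly as you describe. You are also right that the hypothesis $\gamma\in\P(\phi,G)$ plays no role---the identity is valid for every $\gamma\in\Omega_\phi$.

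This is a genuinely different route from the paper's. The paper does not unpack the Liouville integral at all; instead it first proves the special case $G=H$ (where $\tilde\gamma$ is a constant $x$, so the left-hand side collapses to $-F_{\phi\circ\psi^H_1}(x)$ and the composition formula for $F$ gives the result in one line), and then bootstraps: applying this special case once to $(\phi,G)$ and once to $(\phi\circ\psi^H_1,(\psi^H)^\ast(G-H))$, both actions get identified with the \emph{same} quantity $\A_{\phi\circ\psi^G_1,0}\bigl((\psi^G)^\ast\gamma\bigr)$, thanks to the composition rule $\N(K)\circ\N(H)=\N(H\#K)$ from Remark~\ref{NatComp} and the identity $H\#\bigl((\psi^H)^\ast(G-H)\bigr)=G$. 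The paper's argument is slicker and more conceptual---it shows the lemma is a formal consequence of the groupoid structure of naturality---but it genuinely uses $\gamma\in\P(\phi,G)$ (so that the pulled-back loop is constant). Your argument trades that elegance for a self-contained computation that is more general and does not rely on Remark~\ref{NatComp}.
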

\begin{proof}
Let $\widetilde{\gamma}(t):= \left(\psi^H\right)^\ast \gamma(t) = (\psi_t^H)^{-1}\circ \gamma(t).$
\begin{enumerate}[Step 1.]
\item The statement is true for $G=H.$\\
In this case, $\widetilde{\gamma}$ is a constant map $\widetilde{\gamma}\equiv x,$ and $\gamma(t)=\psi_t^H(x)$. Hence (because $\widetilde{\gamma}^\ast \lambda=0$), the left hand side of~\eqref{NatActEq:1} is equal to
\begin{equation}
-F_{\phi\circ\psi^H_1}(x)= -F_{\phi}\circ\psi_1^H(x)-F_{\psi^H_1}(x) = \A_{\phi,H}(\gamma). 
\end{equation}
\item Proof of the lemma.\\
We apply step~1 to pairs $(\phi,G)$ and $\left( \phi\circ\psi^H_1,\left(\psi^H\right)^\ast(G-H)\right),$ and use Remark~\ref{NatComp}.
\end{enumerate}
\end{proof}

The group $HF^{<c}(\phi,a)$ is invariant under isotopies of $\phi$ through $\OP{Symp}_c(W,\lambda/d).$ This follows from Lemma~\ref{lem:NatAct}. The isomorphism associated to an isotopy of $\phi$ through $\OP{Symp}_c(W,\lambda/d)$ can be constructed as in Section~\ref{sec:InvIso}.

\section{Applications to symplectic isotopy problem}
\subsection{Proofs of main results}
\begin{definition}\label{FibDTwist}[\cite{MR1765826,BiranGiroux}]
Assume the Reeb flow of $(M,\beta)$ is 1-periodic. Let $v:\mathbb{R}\to \mathbb{R}$ be a smooth function which is equal to $0$ on $(-\infty,0)$ and $-1$ on $(0.95,+\infty).$ The \textbf{right-handed fibered Dehn twist} is an element $\T$ of $\OP{Symp}_c(W,\omega)$ defined by
\[\T(p)=\left\lbrace \begin{matrix}
\left( \sigma^\beta_{v(r)}(x),r\right)&\text{for }p=(x,r)\in M\times (0,\infty),\\
p&\text{otherwise}.
\end{matrix}\right.\] 
The isotopy class of $\T$ does not depend on the choice of $v.$ 
\end{definition}
\begin{remark}\label{HamGenFib}
The fibered Dehn twist from Definition~\ref{FibDTwist}, is a time-1-map of the Hamiltonian
\[K:\widehat{W}\to\mathbb{R}\::\: p\mapsto \left\lbrace \begin{matrix}
-V(r)&\text{for }p=(x,r)\in M\times (0,\infty),\\
0&\text{otherwise}.
\end{matrix}\right.\]
Here, $V:\mathbb{R}\to\mathbb{R}$ is the unique primitive of $v$ equal 0 near 0. Note that \[K(x,r)= r-\int_0^1v(t)dt-1\]
near infinity.
\end{remark}
\begin{lemma}[Giroux]\label{thm:HomEquiv}
The inclusion
\[j\::\:\OP{Symp}_c(W,\lambda/d)\to \OP{Symp}_c(W,\omega)\]
is a homotopy equivalence.
\end{lemma}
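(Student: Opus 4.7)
The plan is to realize $j$ as the inclusion of the kernel of a continuous ``flux'' homomorphism of topological groups
\[\Phi:\OP{Symp}_c(W,\omega)\to H^1_c(W-M;\mathbb{R}),\quad \phi\mapsto[\phi^\ast\lambda-\lambda],\]
to build a continuous section $\sigma$ of $\Phi$ with $\sigma(0)=\OP{id}$, and then to assemble $\sigma$ into an explicit homotopy inverse to $j$. Since the target $H^1_c(W-M;\mathbb{R})$ is a finite-dimensional vector space (hence contractible), a continuous splitting will immediately give that $j$ is a homotopy equivalence.

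First I would check that $\Phi$ is well defined and a continuous group homomorphism. Well-definedness is clear since $\phi^\ast\lambda-\lambda$ is closed (because $\phi^\ast\omega=\omega$) and compactly supported in $W-M$ (because $\phi=\OP{id}$ near $\partial W$). The homomorphism property follows from the identity $(\phi\psi)^\ast\lambda-\lambda=\psi^\ast(\phi^\ast\lambda-\lambda)+(\psi^\ast\lambda-\lambda)$ combined with the fact that a compactly supported diffeomorphism of $W-M$ acts as the identity on $H^1_c$ (via Poincar\'e--Lefschetz duality $H^1_c\cong H_{2n-1}$ and the observation that any singular cycle in a connected open manifold admits a representative disjoint from a prescribed compact set). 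By the very definition of exactness, $\ker\Phi=\OP{Symp}_c(W,\lambda/d)$.

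Next, I would build the continuous section. Compactness of $W$ makes $H^1_c(W-M;\mathbb{R})$ finite-dimensional, so one may choose once and for all a linear section $s:H^1_c\to Z^1_c$ of the canonical surjection from closed compactly supported 1-forms to $H^1_c$, with $s(0)=0$. For each $\alpha\in Z^1_c$ the symplectic vector field $X_\alpha$ defined by $X_\alpha\lrcorner\omega=\alpha$ has compact support in $W-M$, so its time-1 flow $\sigma(c):=\psi^{X_{s(c)}}_1$ lies in $\OP{Symp}_c(W,\omega)$. A short Cartan-formula computation yields $\Phi(\sigma(c))=c$, and one has $\sigma(0)=\OP{id}$. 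Continuity of $\sigma$ is inherited from continuity of $s$, of the assignment $\alpha\mapsto X_\alpha$, and of the smooth dependence of ODE flows on their generating vector fields.

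Finally, I would set
\[r(\phi):=\phi\circ\sigma(-\Phi(\phi))\]
and check, using the homomorphism property of $\Phi$, that $r$ lands in $\ker\Phi=\OP{Symp}_c(W,\lambda/d)$, that $r\circ j=\OP{id}$ (using $\sigma(0)=\OP{id}$), and that the obvious homotopy $r_t(\phi):=\phi\circ\sigma(-t\Phi(\phi))$ from $\OP{id}$ at $t=0$ to $j\circ r$ at $t=1$ exhibits $j\circ r\simeq\OP{id}$. The step I expect to be the main obstacle is the construction of the global continuous section $\sigma$: one needs the lifting of a cohomology class to a compactly supported symplectomorphism with the prescribed flux to be genuinely global and continuous, which relies crucially on the finite-dimensionality of $H^1_c(W-M;\mathbb{R})$ stemming from $W$ being a compact manifold with boundary.
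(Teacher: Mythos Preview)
There is a genuine error in your argument: $\Phi$ is \emph{not} a group homomorphism. Your justification---that a compactly supported diffeomorphism of a connected open manifold acts trivially on $H^1_c$---already fails in dimension two. Take $W$ to be a genus-one surface with one boundary circle, made into a Liouville domain with any area form admitting a suitable primitive. A Dehn twist along a nonseparating simple closed curve $a\subset W-M$ is a compactly supported area-preserving diffeomorphism, and it acts on $H_1(W-M;\mathbb{R})\cong H^1_c(W-M;\mathbb{R})\cong\mathbb{R}^2$ by the nontrivial transvection $[b]\mapsto[b]+[a]$. Your ``push the $(2n-1)$-cycle off the prescribed compact set'' step fails precisely here: any representative of $[b]$ has algebraic intersection $1$ with $a$ and hence cannot avoid the annular support of the twist. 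In general $\Phi$ is only a crossed homomorphism, $\Phi(\phi\psi)=\psi^\ast\Phi(\phi)+\Phi(\psi)$.

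Your overall scheme can be rescued, because the only place you invoke the homomorphism property is in checking $\Phi(r(\phi))=0$, and there the second factor is $\sigma(-\Phi(\phi))$, which is the time-$1$ map of a compactly supported flow and hence isotopic to the identity through $\OP{Symp}_c(W,\omega)$; such maps \emph{do} act trivially on $H^1_c$, so the cocycle identity suffices.

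The paper's proof is more direct and sidesteps cohomology entirely. For $\phi\in\OP{Symp}_c(W,\omega)$ it takes the symplectic dual $Y^\phi$ of the specific closed $1$-form $\lambda-\phi^\ast\lambda$ (not an arbitrary representative of its class) via $\omega(Y^\phi,\cdot)=\lambda-\phi^\ast\lambda$, and sets $T(\phi):=\phi\circ\psi^\phi_1$, where $\psi^\phi_t$ is the flow of $Y^\phi$. Because $Y^\phi\lrcorner(\lambda-\phi^\ast\lambda)=\omega(Y^\phi,Y^\phi)=0$, the flow $\psi^\phi_t$ preserves $\lambda-\phi^\ast\lambda$, and a short Cartan computation gives
\[(\psi^\phi_t)^\ast\lambda-\lambda=t(\lambda-\phi^\ast\lambda)+d\!\left(\int_0^t\lambda(Y^\phi)\circ\psi^\phi_s\,ds\right),\]
from which $T(\phi)^\ast\lambda-\lambda$ is immediately exact. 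The homotopies $j\circ T\simeq\OP{id}$ and $T\circ j\simeq\OP{id}$ are both given by $(t,\phi)\mapsto\phi\circ\psi^\phi_t$; for the latter one checks, using the same displayed formula, that the path stays in $\OP{Symp}_c(W,\lambda/d)$ when $\phi$ is already exact. This avoids the flux map, the choice of linear section, and the homomorphism issue altogether.
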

\begin{proof}[Proof (following \cite{BiranGiroux})]
We construct a homotopy inverse of $j$ as follows. Let $Y^\phi,\:\phi\in\OP{Symp}_c(W,\omega)$ be a vector field on $W$ defined by
\[\lambda-\phi^\ast\lambda= \omega\left( Y^\phi,\cdot\right).\]
Since $Y^\phi$ is compactly supported in $W-M,$ its flow $\psi_t^\phi$ is well defined. We denote by $T$ the map
\[\OP{Symp}_c(W,\omega)\to\OP{Symp}_c(W,\lambda/d)\::\: \phi\mapsto \phi\circ\psi_1^\phi.\]

First, we prove that $T$ is well defined, i.e. that $T(\phi)\in \OP{Symp}_c(W,\lambda/d).$ Note that $\psi^\phi_t=:\psi_t$ is a symplectomorphism for all $t\in \mathbb{R}$ and that it preserves the form $\lambda-\phi^\ast \lambda.$ Using Cartan's formula, we get
\[\begin{split}
\frac{d}{dt} \left( \psi_t^\ast \lambda\right) &= \psi_t^\ast\left( Y^\phi\lrcorner d\lambda+ d Y^\phi\lrcorner \lambda\right)\\
&=\psi_t^\ast\left( \lambda-\phi^\ast\lambda+
d\left(\lambda\left( Y^\phi\right)\right) \right) \\
&=\lambda-\phi^\ast\lambda +d\left(\lambda\left( Y^\phi\right)\circ\psi_t \right).
\end{split}\] 
Hence
\begin{equation} \label{eq:les:Tmap1}
\psi_t^\ast\lambda-\lambda =\left( \lambda-\phi^\ast \lambda \right)t+ d\left( \int_0^t \lambda\left(Y^\phi \right)\circ \psi_s ds \right).
\end{equation}
This implies that
\begin{equation}
\label{eq:les:Tmap2}
\begin{split}
(\phi\circ\psi_1)^\ast \lambda-\lambda &= \psi_1^\ast \phi^\ast \lambda -\psi_1^\ast\lambda + \psi_1^\ast \lambda -\lambda\\
&= \psi_1^\ast\left(\phi^\ast\lambda -\lambda \right) + \psi_1^\ast\lambda -\lambda\\
&= \psi_1^\ast\lambda -\lambda -\left(\lambda-\phi^\ast \lambda\right)
\end{split}
\end{equation}
is an exact form.\\

Now, we prove that $j\circ T$ and $T\circ j$ are homotopic to the identity. The homotopies are given by
\[[0,1]\times \OP{Symp}_c(W,\omega)\to \OP{Symp}_c(W,\omega)\::\: (t,\phi)\mapsto \phi\circ \psi^\phi_t,\]
\[[0,1]\times\OP{Symp}_c(W,\lambda/d) \to \OP{Symp}_c(W,\lambda/d)\::\:(t,\phi)\mapsto \phi\circ\psi_t^\phi.\]
In what follows, we check that the latter is well defined. Similarly as in \eqref{eq:les:Tmap2}, we get
\[\left(\phi\circ \psi_t^\phi\right)^\ast\lambda- \lambda = \left( \left(\psi^\phi_t\right)^\ast\lambda -\lambda \right)+ \left(\phi^\ast\lambda -\lambda\right).\]
The both forms in the brackets (on the right-hand side of the equation above) are exact: the latter because of $\phi\in\OP{Symp}_c(W,\lambda/d),$ and the former because of \eqref{eq:les:Tmap1} in addition.
\end{proof}
\begin{proof}[Proof of Theorem~\ref{thm:SlopePeriodicity}]
Remark~\ref{HamGenFib} implies that the $\ell$-th power of the fibered Dehn twist $\tau^\ell$ is the time-1 map of a Hamiltonian $K:\hat{W}\to \mathbb{R}$ that is equal to $\ell r$ near infinity. The naturality provides us with the isomorphism (see Proposition~\ref{prop:nat})
\begin{equation}\label{eq:DTInfOrd1}
\N(K): HF(\OP{id},a+\ell)\overset{\cong}{\to} HF(\tau^\ell,a)
\end{equation}
for all admissible $a\in\mathbb{R}.$ On the other hand, if we assume that $\tau^\ell$ represents trivial class in $\pi_0\OP{Symp}_c(W,\omega),$ by Lemma~\ref{thm:HomEquiv}, it represents trivial class in $\OP{Symp}_c(W,\lambda/d)$ as well. Theorem~\ref{IsoInv} implies
\begin{equation}\label{eq:DTInfOrd2}
HF(\OP{id},a)\cong HF(\tau^\ell,a).
\end{equation}
From \eqref{eq:DTInfOrd1} and \eqref{eq:DTInfOrd2}, we get
\begin{equation}\label{eq:DTInfOrd3}
HF(\OP{id},a)\cong HF(\OP{id},a+\ell).
\end{equation}
(In general, the isomorphism~\eqref{eq:DTInfOrd3} preserves neither the grading nor the class in $\pi_0\Omega_\phi.$)
\end{proof}
\begin{remark}
Strictly speaking, the construction of the groups $HF_\ast(\phi,a)$ for $\phi\not=\OP{id}$ was not necessary for the proof of Theorem~\ref{thm:SlopePeriodicity}. Nevertheless, we find developing the theory for general exact symplectomorphism worthwhile for the following reasons. Using $HF_\ast(\phi,a)$ makes the proof transparent by placing the emphasis on an invariant of a mapping class. Additionally, the groups $HF_\ast(\phi,a)$ might be useful for further applications.

In the rest of the remark, we rewrite the proof of Theorem~\ref{thm:SlopePeriodicity}  without mentioning $HF_\ast(\phi,a)$ for $\phi\not=\OP{id}.$  
Let $K:\hat{W}\to\mathbb{R}$ be a Hamiltonian equal to $\ell r$ near infinity such that $\psi^K_1=\tau^\ell,$ and let $G_t:\hat{W}\to\mathbb{R}$ be a compactly supported Hamiltonian such that $G_{t+1}=G_t$ and $\psi^G_1=\tau^\ell.$ The Hamiltonian $G$ exists because $[\tau^\ell]=1\in\pi_0\OP{Symp}_c(W,\lambda/d)$ and because of Lemma~\ref{thm:PathExSymp}. Consider the Hamiltonian $G\#\overline{K}.$ It has the slope equal to $-\ell$ and generates the isotopy $\psi^G_t\circ(\psi^K_t)^{-1}.$ In particular, its time-1 map is equal to the identity. Now, the naturality with respect to this Hamiltonian provides the isomorphism~\eqref{eq:DTInfOrd3}.
\end{remark}
\begin{proof}[Proof of Corollary~\ref{thm:DTInfOrd}]
Assume, by contradiction, that the fibered Dehn twist represents a class of order $\ell\in\mathbb{N}$ in $\pi_0\OP{Symp}_c(W,\omega)$ (and, equivalently, in $\pi_0\OP{Symp}_c(W,\lambda/d)$). Then Theorem~\ref{thm:SlopePeriodicity} implies
\[HF(\OP{id},a)\cong HF(\OP{id},a+\ell)\]
for all admissible $a\in\mathbb{R}.$ In particular,
\[HF(\OP{id},k\ell-\varepsilon)\cong HF(\OP{id},-\varepsilon)\cong H(W;\mathbb{Z}_2),\]
for $k\in\mathbb{Z}$ and $\varepsilon>0$ small enough (see also Remark~\ref{rmk:KnownSlopes}). It follows that
\begin{equation}\label{eq:dim}
\dim HF(\OP{id},k\ell-\varepsilon)=\dim H(W;\mathbb{Z}_2).
\end{equation}
Since 
\[SH(W;\mathbb{Z}_2)=HF(\OP{id},\infty)=\underset{a}{\lim_{\longrightarrow}} HF(\OP{id},a)\]
has dimension greater than or equal to
\[d:=\dim H(W,\mathbb{Z}_2)+1,\]
there exist elements $\alpha_j\in HF(\OP{id},a_j),\:j\in\{1,\ldots, d\}$ such that 
\[\iota(\alpha_1),\ldots, \iota(\alpha_d)\in SH(W;\mathbb{Z}_2)\]
are linearly independent. Here, $\iota$ stands for the homomorphisms 
\[\iota :HF(\OP{id},a)\to \underset{a}{\lim_{\longrightarrow}} HF(\OP{id},a). \]
Let $\varepsilon>0$ be small enough, and let $k\in\mathbb{N}$ be such that
\[k\ell-\varepsilon>\max\{a_1,\ldots, a_d\}.\]
Then the images of $\alpha_1,\ldots, \alpha_d$ under the continuation maps
\[HF(\OP{id},a_j)\to HF(\OP{id},k\ell-\varepsilon),\quad j\in\{1,\ldots, d\}\]
are linearly independent. However, this leads to contradiction, because of \eqref{eq:dim} and $d>\dim H(W;\mathbb{Z}_2).$
\end{proof}

\begin{definition}
Let $m\in\mathbb{N}$ be a natural number. A $\mathbb{Z}_m$-graded vector space $V_\ast$ is called \textbf{symmetric} if there exists $k\in\mathbb{Z}_m$ such that 
\[V_{k-j}\cong V_j\]
for all $j\in\mathbb{Z}_m.$
\end{definition}

\begin{corollary}\label{thm:DTNonTriv}
Let $W,M,\beta$ be as in section~\ref{sec:Conv}, let $o\in\pi_0\Omega_{\OP{id}}$ be the class of contractible loops, and let $N$ be the minimal Chern number $N_o$ (see page~\pageref{eq:ChernNum}). Assume that the Reeb flow of $(M,\beta)$ induces a free circle action on $M.$  If the homology $H_\ast(W;\mathbb{Z}_2)$ rolled up modulo $2N$ is not symmetric, then the fibered Dehn twist represents a nontrivial class in $\pi_0\OP{Symp}_c(W,\omega).$  
\end{corollary}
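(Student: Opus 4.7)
The plan is to argue by contradiction and convert the triviality of $\T$ into an a priori symmetry of $H_\ast(W;\mathbb{Z}_2)$ rolled up mod $2N$. I would suppose that $\T$ represents the trivial class in $\pi_0\OP{Symp}_c(W,\omega)$; by Lemma~\ref{thm:HomEquiv} the same holds in $\pi_0\OP{Symp}_c(W,\lambda/d)$, so Theorem~\ref{thm:SlopePeriodicity} applied with $\ell=1$ gives an isomorphism $HF(\OP{id},a)\cong HF(\OP{id},a+1)$ for every admissible slope $a$.

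The hypothesis that the Reeb flow of $(M,\beta)$ induces a free circle action forces the set of periods of Reeb orbits to be exactly $\mathbb{N}$, so every non-integer real number is admissible. In particular the interval $[\varepsilon,1-\varepsilon]$ consists of admissible slopes for small $\varepsilon>0$, and Lemma~\ref{thm:VarSl} gives $HF(\OP{id},\varepsilon)\cong HF(\OP{id},1-\varepsilon)$. Chaining this with the previous isomorphism yields
\[HF(\OP{id},-\varepsilon)\;\cong\;HF(\OP{id},1-\varepsilon)\;\cong\;HF(\OP{id},\varepsilon).\]
By Remark~\ref{rmk:KnownSlopes} the contractible-loops summand $HF(\OP{id},\pm\varepsilon,o)$ exhausts the whole group on both sides, so the composite restricts to an isomorphism $HF(\OP{id},-\varepsilon,o)\cong HF(\OP{id},\varepsilon,o)$. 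Built from naturality (Lemma~\ref{thm:Nat}), isotopy invariance (Theorem~\ref{IsoInv}), and a continuation map (Theorem~\ref{ModuliS2})---each of which preserves the relative Maslov index---this composite is an isomorphism of $\mathbb{Z}_{2N}$-graded $\mathbb{Z}_2$-vector spaces up to a single overall shift $d\in\mathbb{Z}_{2N}$.

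To finish, I would invoke the identifications of Remark~\ref{rmk:KnownSlopes} together with Lefschetz duality over $\mathbb{Z}_2$: $HF_k(\OP{id},-\varepsilon,o)\cong H_{k+n}(W;\mathbb{Z}_2)$ and $HF_k(\OP{id},\varepsilon,o)\cong H_{k+n}(W,M;\mathbb{Z}_2)\cong H_{n-k}(W;\mathbb{Z}_2)$. Writing $\mathcal{H}_k:=\bigoplus_{j\equiv k\,(\mathrm{mod}\,2N)}H_j(W;\mathbb{Z}_2)$, the graded isomorphism of the previous paragraph becomes $\mathcal{H}_{n+k}\cong\mathcal{H}_{n-k-d}$ for every $k\in\mathbb{Z}_{2N}$; substituting $j=n+k$ this reads $\mathcal{H}_j\cong\mathcal{H}_{(2n-d)-j}$, i.e.\ $H_\ast(W;\mathbb{Z}_2)$ rolled up mod $2N$ is symmetric around the center $k_0=2n-d\in\mathbb{Z}_{2N}$, contradicting the hypothesis.

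The main obstacle I anticipate is the bookkeeping needed to extract a genuine mod-$2N$ graded isomorphism on the $o$-component: as observed parenthetically in the proof of Theorem~\ref{thm:SlopePeriodicity}, the isomorphism \eqref{eq:DTInfOrd3} a priori preserves neither the class in $\pi_0\Omega_\phi$ nor the $\mathbb{Z}$-grading. Class preservation is rescued by the vanishing statement in Remark~\ref{rmk:KnownSlopes}, which leaves only the $o$-summand alive for $|\varepsilon|$ small and therefore forces the composite to map it to itself; the resulting global shift $d$ is absorbed into the choice of symmetry center, so its precise value is irrelevant.
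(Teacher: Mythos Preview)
Your proposal is correct and follows essentially the same route as the paper: assume $\tau$ is trivial, use Theorem~\ref{thm:SlopePeriodicity} with $\ell=1$ to compare $HF(\OP{id},-\varepsilon)$ with $HF(\OP{id},1-\varepsilon)$, invoke Lemma~\ref{thm:VarSl} and Remark~\ref{rmk:KnownSlopes} to reduce to the $o$-component (the paper phrases this as ``$H(W;\mathbb{Z}_2)$ is never $0$, hence $o'=o$''), and then apply Poincar\'e--Lefschetz duality over $\mathbb{Z}_2$ to obtain the symmetry $H_{\ast}(W;\mathbb{Z}_2)\cong H_{2n-c-\ast}(W;\mathbb{Z}_2)$. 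The only cosmetic difference is that you compose the VarSl continuation into the chain before extracting the $o$-summand, whereas the paper first isolates the component $o'$ and then identifies $HF_\ast(\OP{id},1-\varepsilon,o')$ via Lemma~\ref{thm:VarSl} and Remark~\ref{rmk:KnownSlopes}.
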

\begin{proof}
Let $1>\varepsilon>0.$ Theorem~\ref{thm:SlopePeriodicity} implies
\[HF(\OP{id},-\varepsilon)\cong HF(\OP{id},1-\varepsilon).\]
This isomorphism preserves neither the grading nor the class in $\pi_0\Omega_{\OP{id}}.$ However, it preserves the relative grading. Hence there exist a class $o'\in \pi_0\Omega_{\OP{id}}$ and $c\in\mathbb{Z}_{2N}$ such that
\begin{equation}\label{eq:oo}
HF_\ast(\OP{id},-\varepsilon,o)\cong HF_{\ast+c}(\OP{id}, 1-\varepsilon,o').
\end{equation}
Since $\varepsilon\in(-1,0),\: 1-\varepsilon\in(0,1)$ and numbers in $(-1,0)\cup(0,1)$ are all admissible, we get (by Lemma~\ref{thm:VarSl} and Remark~\ref{rmk:KnownSlopes})
\begin{align*}
HF_\ast(\OP{id},-\varepsilon,o)&\cong H_{\ast+n}(W;\mathbb{Z}_2),\\
HF_\ast(\OP{id},1-\varepsilon,o')&\cong\left\{ \begin{matrix} H_{\ast+n}(W,M;\mathbb{Z}_2)&\text{if }o=o' \\ 0&\text{otherwise.}\end{matrix}\right.
\end{align*} 
The group $H(W;\mathbb{Z}_2)$ is never 0. Hence (because of \eqref{eq:oo}), $o=o'$ and 
\[H_\ast(W;\mathbb{Z}_2)\cong H_{\ast+c}(W,M;\mathbb{Z}_2).\]
By Poincar\'e duality
\[H_\ast(W,M;\mathbb{Z}_2)\cong H^{2n-\ast}(W;\mathbb{Z}_2).\]
Since we are working with field coefficients, we have
\[H^{2n-\ast}(W,\mathbb{Z}_2)\cong \OP{Hom}(H_{2n-\ast}(W;\mathbb{Z}_2),\mathbb{Z}_2)\cong H_{2n-\ast}(W;\mathbb{Z}_2).\] 
Therefore
\[H_{2n-c-\ast}(W;\mathbb{Z}_2)\cong H_{\ast}(W;\mathbb{Z}_2).\]
We take $k=2n-c$ and the proof is finished.
\end{proof}

\begin{proof}[Proof of Corollary~\ref{cor:DTNonTrivC0}]
The corollary is a special case of Corollary~\ref{thm:DTNonTriv} (the case in which $N=\infty$).
\end{proof}

\subsection{Examples}
\begin{example}\label{SymSpaces}
Let $(Q,g)$ be a closed Riemannian manifold. We denote by $D^\ast Q$ and $S^\ast Q$ the cotangent disk bundle and the unit cotangent bundle of $Q,$ respectively. The standard Liouville form $\lambda_{\OP{can}}$ on $T^\ast Q$ equips $D^\ast Q$ with the structure of a Liouville domain. The Reeb flow on $S^\ast Q$ coincides with the geodesic flow of $Q$ under the obvious identification of tangent and cotangent bundles of $Q.$ It is periodic if, and only if, the geodesics of $(Q,g)$ are all periodic. Examples of Riemannian manifolds with all geodesics periodic are spheres $\mathbb{S}^m,$ complex and quaternionic projective spaces $\mathbb{C}P^m$ and $\mathbb{H}P^m,$ and the Cayley plane (octonionic projective plane) $\mathbb{C}aP^2$ with the standard metrics.

Assume $(Q,g)$ is one of these manifolds. We can rescale $g$ so that the Reeb flow on $S^\ast Q$ is 1-periodic. By a theorem of Viterbo (see Theorem~\ref{thm:Vit} below), the symplectic homology $SH_\ast (D^\ast Q;\mathbb{Z}_2)$ is isomorphic to the singular homology $H_\ast (\Lambda Q;\mathbb{Z}_2)$ of the free loop space of $Q.$ The homology of $\Lambda Q$ is explicitely computed in~\cite{MR0649625}, and it turns out to be infinite dimensional in $\mathbb{Z}_2$ coefficients. Hence Corollary~\ref{thm:DTInfOrd} implies that the corresponding fibered Dehn twist is of infinite order in $\pi_0\OP{Symp}_c(D^\ast Q,d \lambda_{\OP{can}}).$   
\end{example}
\begin{theorem}[Viterbo]\label{thm:Vit}
Let $(Q,g)$ be a closed Riemannian manifold, and let $\Lambda Q$ be its free loop space. If $(W,\lambda)=\left(D^\ast Q,\lambda_{\OP{can}} \right),$ then
\[SH_\ast(W;\mathbb{Z}_2)\cong H_\ast(\Lambda Q;\mathbb{Z}_2).\] 
\end{theorem}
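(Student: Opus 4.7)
The plan is to follow the Floer-theoretic approach of Abbondandolo--Schwarz, which compares the Floer complex of a fiberwise convex Hamiltonian on $T^\ast Q$ with the Morse complex of the classical energy functional on the free loop space. Throughout we work with $\mathbb{Z}_2$ coefficients, which eliminates any orientation/sign bookkeeping.

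First, I would choose a good Hamiltonian. Fix a Riemannian metric on $Q$ and, for each admissible slope $a > 0$, pick a smooth Hamiltonian $H_a:T^\ast Q \to \mathbb{R}$ which is fiberwise strictly convex, equals $\tfrac12|p|^2$ on some compact region, and is equal to $a\,r$ near infinity in the Liouville coordinate $r = |p|$. The 1-periodic orbits of $X_{H_a}$ are in bijection with smooth critical points of the Lagrangian action functional $\mathcal{S}_{L_a}(\gamma) = \int_0^1 L_a(\gamma,\dot\gamma)\,dt$ on $W^{1,2}(S^1,Q)$, where $L_a$ is the Legendre dual of $H_a$; in particular they project to closed geodesics on $Q$ of length at most (roughly) $a$. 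The Legendre duality identifies the symplectic action of an orbit with $\mathcal{S}_{L_a}$ of its projection.

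Next, I would construct a chain-level isomorphism. Following Abbondandolo--Schwarz, consider the hybrid moduli spaces consisting of pairs $(u,\gamma)$, where $u:(-\infty,0]\times S^1 \to T^\ast Q$ is a Floer half-cylinder asymptotic to a Hamiltonian orbit $x$ at $s=-\infty$ and $\gamma:[0,\infty)\to W^{1,2}(S^1,Q)$ is a negative gradient trajectory of $\mathcal{S}_{L_a}$ asymptotic to a critical point $q$ at $s=+\infty$, matched by $\pi\circ u(0,\cdot) = \gamma(0)$, where $\pi:T^\ast Q\to Q$ is the projection. The count of rigid such pairs defines a chain map
\[
\Theta_a : CF_\ast(\mathrm{id},H_a,J) \longrightarrow CM_\ast(\mathcal{S}_{L_a}).
\]
The proof that $\Theta_a$ is an isomorphism relies on two inputs: a filtration argument using the action, together with a standard ``bending'' homotopy showing that a Floer half-cylinder with very large $s$-length must essentially lie in the zero section and be governed by the Morse flow of $\mathcal{S}_{L_a}$. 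A priori $C^0$-bounds, needed to rule out escape to infinity, follow from the convexity/linearity of $H_a$ together with the maximum principle established for admissible Floer data earlier in the paper (cf.\ Lemma~\ref{thm:maxprinclin}).

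Finally, I would pass to the direct limit. For $a\leqslant a'$ admissible one has a natural homotopy $H_a \preceq H_{a'}$ of convex Hamiltonians, and by naturality of the Abbondandolo--Schwarz construction the squares
\begin{equation*}
\begin{tikzcd}
HF_\ast(\mathrm{id},a) \arrow{r}{\Phi} \arrow{d}{\Theta_a} & HF_\ast(\mathrm{id},a') \arrow{d}{\Theta_{a'}} \\
HM_\ast(\mathcal{S}_{L_a}) \arrow{r} & HM_\ast(\mathcal{S}_{L_{a'}})
\end{tikzcd}
\end{equation*}
commute. Taking $\lim_\to$ as $a\to\infty$, the left-hand column becomes $SH_\ast(D^\ast Q;\mathbb{Z}_2)$ by definition, while the Morse homologies on the right exhaust all critical points of all $\mathcal{S}_{L_a}$ and hence compute $H_\ast(\Lambda Q;\mathbb{Z}_2)$ (since $\mathcal{S}_{L_a}$ is Morse--Bott/Morse with the homotopy type of $\Lambda Q$ under the standard inclusion $C^\infty(S^1,Q)\hookrightarrow \Lambda Q$, which is a homotopy equivalence). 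This yields the desired isomorphism $SH_\ast(W;\mathbb{Z}_2)\cong H_\ast(\Lambda Q;\mathbb{Z}_2)$.

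The main obstacle is the hybrid-moduli-space analysis: establishing transversality, compactness (including excluding escape to infinity in the cotangent direction), and correct asymptotic identifications for the chain map $\Theta_a$. Once this technical input from Abbondandolo--Schwarz is granted, the rest of the argument is formal.
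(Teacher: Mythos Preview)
The paper does not actually prove this theorem: its ``proof'' consists solely of citations to \cite{Viterbo}, \cite{MR2190223}, \cite{MR2276534}, and \cite{Abouzaid}, treating Viterbo's isomorphism as a black box imported from the literature. Your proposal is a reasonable high-level sketch of one of those cited approaches, namely the Abbondandolo--Schwarz argument (\cite{MR2190223}), so in that sense you are supplying strictly more content than the paper does. As an outline it is sound; the only point worth flagging is that the original Abbondandolo--Schwarz paper works with Hamiltonians that are fiberwise quadratic at infinity rather than linear, and an additional (by now standard) cofinality/interleaving argument is needed to identify the direct limit over such Hamiltonians with $SH_\ast$ as defined via linear-at-infinity Hamiltonians. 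If you want the sketch to be self-contained, you should either indicate that step or point to a reference where it is carried out.
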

\begin{proof} 
Proofs can be found in \cite{Viterbo}, \cite{MR2190223}, \cite{MR2276534}, and \cite{Abouzaid}.
\end{proof}
\begin{remark}
Corollary~\ref{thm:DTNonTriv} does not imply nontriviality of the fibered Dehn twists for the Liouville domains considered in Example~\ref{SymSpaces}. The reason is the following. Let $(Q,g)$ be as in Example~\ref{SymSpaces}. The disk cotangent bundle $D^\ast Q$ is homotopic to $Q$ which is a closed manifold, and therefore (by Poincar\'e duality), the homology $H_\ast(D^\ast Q; \mathbb{Z}_2)$ is symmetric.
\end{remark}
\begin{remark}
The square of the Dehn-Seidel twist \cite{MR1362824, thesis-Seidel} for $\mathbb{S}^m$ is symplectically isotopic to the fibered Dehn twist on $D^\ast \mathbb{S}^m,\:m\in\mathbb{N}.$
\end{remark}
\begin{remark}
The fibered Dehn twists on $D^\ast\mathbb{S}^2$ and $D^\ast\mathbb{S}^6$ are smoothly isotopic to the identity relative to the boundary (see Lemma~6.3 in \cite{MR1743463} and also Lemma~10.2 in \cite{Avdek}). The same is true for $D^\ast \mathbb{C}P^m,\,m\in\mathbb{N}$ \cite[Proposition~4.6]{MR1765826}. On the other hand, the fibered Dehn twist on $D^\ast\mathbb{S}^m$ for $m\not\in\{2,6\}$ is not even smoothly isotopic to the identity relative to the boundary (see, for example, Lemma~10.1 in \cite{Avdek}). Interestingly, the only manifolds among $\mathbb{S}^m$ and $\mathbb{C}P^m,\:m\in\mathbb{N},$ admitting almost complex structures are $\mathbb{S}^2,\mathbb{S}^6$ and $\mathbb{C}P^m,\:m\in\mathbb{N}.$ This is not a coincidence (see Proposition~\ref{prop:SmoothlyTrivial} below).  
\end{remark}

\begin{proposition}\label{prop:SmoothlyTrivial}
Let $(Q,g)$ be a Riemannian manifold whose all geodesics are 1-periodic. Assume there exists an almost complex structure $J$ on $Q$ that preserves the norm of vectors in $TQ.$ Then the fibered Dehn twist on $D^\ast Q$ is smoothly isotopic to the identity relative to the boundary.
\end{proposition}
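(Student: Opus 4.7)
The plan is to build an explicit smooth isotopy from the fibered Dehn twist $\tau$ to $\OP{id}$ rel boundary, using the almost complex structure $J$ to bypass the obvious obstruction to interpolating only through geodesic-flow twists.

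First I set up two ``actors.'' Identifying $T^*Q$ with $TQ$ via $g$, the hypothesis that $J$ preserves the norm means each $J_q$ is orthogonal, so the fiberwise formula
\[\Phi_\theta(q,p) := (\cos\theta)\,p + (\sin\theta)\,J_q p\]
defines a smooth $2\pi$-periodic family of diffeomorphisms of $D^*Q$, preserving $|p|_g$, with $\Phi_0=\Phi_{2\pi}=\OP{id}$. Unlike the Reeb (i.e.\ geodesic) flow, $\Phi_\theta$ is fiberwise linear, hence extends smoothly across the zero section. By Remark~\ref{HamGenFib}, $\tau$ is the time-$1$ map of $K(q,p) = -V(|p|_g)$; geometrically, on each sphere $\{|p|=r\}$ it acts as the time-$v(r)$ unit-speed geodesic flow, and equals $\OP{id}$ near $|p|=0$ and near $|p|=1$ (where $v(r)=-1$ makes the geodesic flow close up).

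Next I introduce the auxiliary diffeomorphism
\[\tilde\tau(q,p) := \Phi_{-2\pi v(|p|_g)}(q,p).\]
Because $v\equiv 0$ near $|p|=0$ and $v\equiv -1$ near $|p|=1$, the map $\tilde\tau$ equals the identity near both the zero section and the boundary, and therefore extends smoothly to all of $D^*Q$. It is manifestly isotopic to $\OP{id}$ rel a neighborhood of the boundary via the rescaling $\tilde\tau_s(q,p):=\Phi_{-2\pi s v(|p|_g)}(q,p)$, $s\in[0,1]$. By transitivity it therefore suffices to smoothly isotope $\tau$ to $\tilde\tau$ rel boundary.

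This final step is the heart of the argument and the main obstacle. Both $\tau$ and $\tilde\tau$ preserve each sphere bundle $\{|p|=r\}$, coincide with $\OP{id}$ at $r\in\{0,1\}$, and are obtained by evaluating at parameter proportional to $v(r)$ a loop of diffeomorphisms based at $\OP{id}$ in $\OP{Diff}(S^*Q)$: the Reeb loop $t\mapsto\sigma^\beta_t$ for $\tau$ and the $J$-rotation loop $\theta\mapsto\Phi_\theta|_{S^*Q}$ for $\tilde\tau$. The desired rel-boundary isotopy from $\tau$ to $\tilde\tau$ amounts to producing a smooth homotopy between these two loops in $\OP{Diff}(S^*Q)$ that extends smoothly over $D^*Q$ so as to stay identity near the zero section and near the boundary. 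Constructing this homotopy is where the existence of $J$ is genuinely used: without $J$ there is no fiberwise circle action on $T^*Q$ that is smooth through the zero section, so the Reeb loop cannot be ``pulled into the fiber direction'' in this manner. The hard part will be verifying, with $J$ in hand, that such a homotopy of loops exists and, crucially, can be arranged so that the resulting family of diffeomorphisms of $D^*Q$ is smooth across the zero section for every intermediate parameter---this smoothness is the feature that the fiberwise linearity of $\Phi_\theta$ is designed to supply, and is the technical crux of the proof.
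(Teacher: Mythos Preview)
Your proposal has a genuine gap at precisely the step you label ``the hard part'': you never construct the homotopy between the Reeb loop $t\mapsto\sigma^\beta_t$ and the fiberwise $J$-rotation loop $\theta\mapsto\Phi_\theta|_{S^*Q}$ in $\pi_1\OP{Diff}(S^*Q)$. Everything before that is harmless bookkeeping; everything after it follows formally. But there is no reason, from the data given, that these two loops should be homotopic---they are built from entirely different geometric ingredients (horizontal geodesic flow versus vertical fiber rotation), and the existence of $J$ by itself gives no mechanism for interpolating between them. So as written the argument is circular: the ``technical crux'' you defer is in fact the whole content.

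The paper's own proof takes a different and more modest route. It does \emph{not} prove the statement as literally written; it proves only that the \emph{square} of the fibered Dehn twist is smoothly isotopic to the identity rel boundary (the sentence immediately following the proposition confirms that the case of $\tau$ itself is left open). The key idea is Lemma~\ref{lem:GeoFlowCont}: using $J$ one shows that the antipodal map $\xi\mapsto-\xi$ on $SQ$ is isotopic to the identity via $s\mapsto\cos s\,\xi+\sin s\,J\xi$, and since conjugating the geodesic flow $\Psi_t$ by the antipodal map gives $\Psi_t^{-1}$, the loop $\Psi_t$ is homotopic to its own inverse, hence $\Psi_t^2$ is contractible in $\OP{Diff}(SQ)$. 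A contracting homotopy $\Phi^s$ of this squared loop then yields an explicit isotopy of $\tau^2$ to $\OP{id}$ by the formula $(s,p)\mapsto(\Phi^s_{v(r)}(x),r)$. Note how $J$ is used only to kill the antipodal map, not to produce a competing loop; this sidesteps exactly the obstruction your approach runs into.
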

It is an interesting question whether in the situation of Proposition~\ref{prop:SmoothlyTrivial} the fibered Dehn twist itself is smoothly isotopic to the identity relative to the boundary.

\begin{lemma}\label{lem:GeoFlowCont}
Let $(Q,g)$ be as in Proposition~\ref{prop:SmoothlyTrivial}. Denote by $SQ\subset TQ$ the unit tangent bundle. Then the geodesic flow induces a loop of diffeomorphisms $SQ\to SQ$ whose square is contractible.
\end{lemma}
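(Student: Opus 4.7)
The plan is to exhibit an explicit smooth homotopy in $\OP{Diff}(SQ)$ contracting the square of the geodesic-flow loop to the constant loop, using the almost complex structure $J$ to generate a family of fibrewise rotations that allows the backward half of the loop to cancel the forward half.

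As preparation I would first verify that $J$ produces a fibrewise $S^1$-action on $SQ$. Norm-preservation of $J$ together with the symmetry of $g$ polarizes to $\langle Jv,Jw\rangle=\langle v,w\rangle$, and then $\langle v,Jv\rangle=-\langle v,Jv\rangle$ forces $v\perp Jv$. Hence the formula $R_\theta(v):=(\cos\theta)\,v+(\sin\theta)\,Jv$ defines a smooth family of diffeomorphisms $R_\theta:SQ\to SQ$ with $R_0=\OP{id}$ and $R_\pi=-\OP{id}_{SQ}$ (the fibrewise antipodal map).

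Writing $\phi_t$ for the geodesic flow on $SQ$, the $1$-periodicity hypothesis gives $\phi_1=\OP{id}$, so $t\mapsto \phi_t$ is a loop and its square is represented by $t\mapsto \phi_{2t}$, $t\in[0,1]$. The candidate nullhomotopy is
\[
H(s,t):=\phi_t\circ R_{\pi s}\circ\phi_{-t}\circ R_{-\pi s},\qquad s,t\in[0,1].
\]
I would then check three things: the boundary conditions $H(s,0)=H(s,1)=\OP{id}$ (automatic from $\phi_0=\phi_1=\OP{id}$ together with cancellation of $R_{\pm\pi s}$); the constant loop at $s=0$, i.e.\ $H(0,t)=\phi_t\circ\phi_{-t}=\OP{id}$; and the squared loop at $s=1$, i.e.\ $H(1,t)=\phi_{2t}$.

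The only step that requires genuine geometric input is the last identity. It reduces, via $R_\pi(v)=-v$, to the reversal relation $\phi_{-t}(-v)=-\phi_t(v)$, which is immediate from the fact that the geodesic starting at $-v$ is the time-reversal of the geodesic starting at $v$. I expect this to be the hardest part only in the sense that it is the one place where the geometry of $Q$ (rather than formal group law of $\phi$ and the identity $R_\pi=-\OP{id}$) enters the argument; the computation itself is a one-line chain $H(1,t)(v)=\phi_t(-\phi_{-t}(-v))=\phi_t(\phi_t(v))=\phi_{2t}(v)$. Once this is in hand, $H$ is a smooth homotopy of loops in $\OP{Diff}(SQ)$ from the constant loop to the square of the geodesic-flow loop, which is exactly the conclusion of the lemma.
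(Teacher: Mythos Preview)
Your proof is correct and uses the same two ingredients as the paper: the rotation $R_\theta(v)=(\cos\theta)v+(\sin\theta)Jv$ isotoping the identity to the antipodal map, and the geodesic reversal identity $\phi_{-t}(-v)=-\phi_t(v)$. The only difference is packaging: the paper argues at the level of $\pi_1$ that conjugation by the antipodal map sends $\Psi_t$ to $\Psi_t^{-1}$, so the loop is homotopic to its own inverse and its square is therefore nullhomotopic, whereas you write down the explicit commutator homotopy $H(s,t)=[\phi_t,R_{\pi s}]$ realizing this.
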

\begin{proof}
Let $J$ be as in Proposition~\ref{prop:SmoothlyTrivial}, and let $\gamma_\xi$ be the unique geodesic of $Q$ such that $\dot{\gamma}_\xi (0)= \xi\in TQ.$ The (restriction of the) geodesic flow is given by
\[\Psi_t\::\: SQ\to SQ\::\: \xi\mapsto \dot{\gamma}_\xi(t).\]
The antipodal map 
\[SQ\to SQ\::\:\xi\mapsto -\xi\]
is isotopic to the identity via isotopy
\[[0,\pi]\times SQ\to SQ\::\:(s,\xi)\mapsto \cos{s}\xi+\sin{s}J\xi.\]
Hence the loop
\[\mathbb{R}/\mathbb{Z}\:\ni\:t\mapsto \Psi_t\:\in\:\OP{Diff}(SQ)\]
is homotopic to the loop
\[t\mapsto -\Psi_t(-\cdot)\]
($\Psi_t$ is pre- and postcomposed by the antipodal map). Since $-\Psi_t(-\xi)=\Psi^{-1}_t(\xi),\:\xi\in SQ,$ the loop $t\mapsto \Psi_t$ is homotopic to its inverse $t\mapsto \Psi_t^{-1}.$ Therefore, the loop
$t\mapsto \Psi^2_t$
is contractible. 
\end{proof}

\begin{proof}[Proof of Proposition~\ref{prop:SmoothlyTrivial}]
We take $(W,\lambda)=(D^\ast Q,\lambda_{\OP{can}})$ and use the notation from section~\ref{sec:Conv}. The square of the fibered Dehn twist is given by
\begin{equation}\label{eq:SqDTw}
\hat{W}\:\ni\:p\mapsto  \left\{ \begin{matrix}
\left( \sigma^2_{v(r)}(x),r\right)&\text{for }p=(x,r)\in M\times (0,\infty),\\
p&\text{otherwise},
\end{matrix}\right.
\end{equation}
where $\sigma_t=\sigma^\beta_t$ is the Reeb flow on $M, $ and $v:\mathbb{R}\to\mathbb{R}$ is a smooth function that is equal to 0 on $(-\infty,0)$ and -1 on $(0.95,+\infty).$  By Lemma~\ref{lem:GeoFlowCont}, $\sigma^2$ seen as a loop
\[\mathbb{R}/\mathbb{Z}\to\OP{Diff}(M)\]
is homotopic to the constant loop $t\mapsto\OP{id}.$ Let 
\[\Phi^s:\mathbb{R}/\mathbb{Z}\to \OP{Diff}(M),\quad s\in[0,1]\]
be a homotopy between the loops $t\mapsto \OP{id}$ and $t\mapsto\sigma^2_t.$ An isotopy between \eqref{eq:SqDTw} and the identity is given by
\[[0,1]\times\hat{W}\:\ni\:(s,p)\mapsto  \left\{ \begin{matrix}
\left( \Phi^s_{v(r)}(x),r\right)&\text{for }p=(x,r)\in M\times (0,\infty),\\
p&\text{otherwise},
\end{matrix}\right.\]
\end{proof}

Albers and McLean found in \cite{MR2817777} a large family of contact manifolds such that every strong filling of them has infinite dimensional symplectic homology. It is an interesting question, in the view of Corollary~\ref{thm:DTInfOrd}, whether any of their exact fillable examples has a periodic Reeb flow.

\begin{example}
Assume $W$ is a higher-genus closed surface with an open disk removed. Since $H_\ast(W;\mathbb{Z}_2)$ is not symmetric, Corollary~\ref{thm:DTNonTriv} implies that the fibered Dehn twist represents a nontrivial class in $\pi_0\OP{Symp}_c(W,\omega).$ However, the Fibered Dehn twist in this case is the Dehn twist along (a translate of) the boundary circle.
\end{example}

\begin{example}\label{HyperplaneSection}
Let $V$ be a smooth degree $d\geqslant 2$ projective hypersurface in $\mathbb{C}P^m,\:m>3,$ and let $H$ be a hyperplane transverse to $V.$  By removing a neighbourhood of $V\cap H$ in $V,$ we get a Liouville domain $W$ such that the Reeb flow induces a free circle action on its boundary $\partial W.$ By \cite{MR894295}, the manifold $W$ has the homotopy type of a bouquet of $(m-1)$-spheres. Moreover, the number of the spheres is equal to $(d-1)^m$ (see Theorem~1 and consequence (iii) on page~487 in \cite{MR2018927}). Hence $c_1(W)=0$ and the homology $H_\ast(W;\mathbb{Z}_2)$ is not symmetric. Corollary\ref{cor:DTNonTrivC0} implies that the fibered Dehn twist of $W$ is not symplectically isotopic to the identity relative to the boundary. (See also Example~7.14 in~\cite{ChiangDingKoert2014}.) 
\end{example}
\section{Generalization and further applications}
\subsection{Generalized slope}
In this section we prove the maximum principle which enables us to extend the definition of Floer homology to a wider class of triples $(\phi,H,J).$ More precisely, we replace condition~\eqref{HamLin} by the following weaker condition
\begin{equation}
H_t(x,r)= rh_t(x),\quad\text{near infinity,}
\end{equation}
where $h:\mathbb{R}\times M\to (0,\infty)$ is a (time-dependent) contact Hamiltonian such that $\sigma^h_1$ is a strict contactomorphism, i.e. a contactomorphism $M\to M$ that preserves the contact 1-form $\beta$ (see Definition~\ref{def:ContHam} below).
A similar maximum principle has been independently proven by Ritter \cite[Appendix~C]{Ritter}.
\begin{definition}\label{def:ContHam}
A \textbf{contact Hamiltonian} $h$ on $M$ is a smooth function $h:M\to \mathbb{R}.$ We can associate to it a vector field $Y_h$ on M in the following way. Consider the Hamiltonian $H:M\times (0,\infty)\to\mathbb{R}:(x,r)\mapsto rh(x)$  on the symplectization. We define $Y_h:=d(\OP{pr}_M)X_h,$ where $\OP{pr}_M$ is the projection. A time-dependent contact Hamiltonian $h:\mathbb{R}\times M\to \mathbb{R}$ generates the contact isotopy $\sigma^h_t:M\to M$ defined by $\partial_t\sigma_t^h=Y_{h_t}\circ\sigma^h_t,\:\sigma^h_0=\OP{id}.$
Here, $h_t:=h(t,\cdot).$ We denote by $f^h_t$ the positive function $M\to (0,\infty)$ defined by $(\sigma^h_t)^\ast\beta=f_t^h\beta.$
\end{definition}
\begin{remark}\label{rmk:hY}
Let $h:M\to \mathbb{R}$ be a contact Hamiltonian. Then
$h_t=-\beta(Y_{h_t}).$
\end{remark}

\begin{definition}
A contact Hamiltonian $h_t:M\to \mathbb{R}$ is called \textbf{admissible} if it is 1-periodic and the map $\sigma^h_1:M\to M$ is a strict contactomorphism without fixed points.
\end{definition}
\begin{definition}\label{def:FloerDataGen}
Let $\phi\in\OP{Symp}_c(W,\lambda/d),$ and let $h$ be an admissible contact Hamiltonian. \textbf{Floer data} for $(\phi,h)$ consists of a (time-dependent) Hamiltonian $H_t:\hat{W}\to\mathbb{R}$ and a family $J_t$ of $\omega$-compatible almost complex structures on $\hat{W}$ satisfying the following conditions. $H$ and $J$ are twisted by $\phi,$ i.e.
\begin{align*}
H_{t+1}&=H_t\circ\phi,\\
J_{t+1}&=\phi^\ast J_t.
\end{align*}
In addition, $H_t(x,r)=rh_t(x)$ near infinity and the almost complex structure $(\psi_t^H)^\ast J_t$ satisfies conditions $\eqref{JCondInfComp}$ and $\eqref{JCondInfDel}$ near infinity. We will refer to the (time-dependent) contact Hamiltonian $h$ as the \textbf{slope} of $H.$
\end{definition}
Regular Floer data for $(\phi,h)$ is defined in analogy to Definition~\ref{def:RegFlDat}.
\begin{lemma}\label{thm:MaxPrincGen}
Let $h_t:M\to\mathbb{R}$ be a contact Hamiltonian, let $H_t:=rh_t: M\times(0,\infty)\to\mathbb{R},$ let $J_t$ be a family of almost complex structures on $M\times(0,\infty)$ such that $(\psi_t^H)^\ast J_t$ satisfies conditions \eqref{JCondInfComp} and \eqref{JCondInfDel}, and let
\[u=(v,g):U\to M\times(0,\infty)\]
be a solution of the Floer equation \eqref{FloerEq}, where $U$ is an open connected subset of $\mathbb{R}^2.$ Then the function 
\[(s,t)\mapsto g(s,t)f^h_t\left( (\sigma^h_t)^{-1}\circ v(s,t)\right)\]
has no local maxima in $U$ unless it is constant.
\end{lemma}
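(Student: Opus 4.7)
The plan is to reduce to the standard contact-type maximum principle via the change of variables $\tilde u:=(\psi^H_t)^{-1}\circ u$. Since the twisted almost complex structure $(\psi_t^H)^\ast J_t$ is by hypothesis of standard contact type near infinity, $\tilde u$ will satisfy an unperturbed pseudo-holomorphic equation, and the function in the lemma will turn out to be precisely the radial coordinate of $\tilde u$.

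First I would work out an explicit formula for $\psi^H_t$ on $M\times(0,\infty)$. From $\omega=d(r\beta)$ and $H_t=rh_t$ a direct computation of $X_{H_t}$ using the characterization of $Y_{h_t}$ (see Remark~\ref{rmk:hY}) gives $X_{H_t}=r\,dh_t(R^\beta)\partial_r+Y_{h_t}$, which integrates to the symplectization lift of $\sigma^h_t$, namely
\[\psi^H_t(x,r)=\bigl(\sigma^h_t(x),\,r/f^h_t(x)\bigr),\qquad (\psi^H_t)^{-1}(y,\rho)=\Bigl((\sigma^h_t)^{-1}(y),\,\rho\cdot f^h_t\!\bigl((\sigma^h_t)^{-1}(y)\bigr)\Bigr).\]
One verifies this either by checking $(\psi^H_t)^\ast(r\beta)=r\beta$ (so $\psi^H_t$ is even exact) or by comparing $\partial_t\psi^H_t$ with $X_{H_t}\circ\psi^H_t$, using the identity $\partial_tf^h_t=-\bigl(dh_t(R^\beta)\circ\sigma^h_t\bigr)f^h_t$ which is obtained by differentiating $(\sigma^h_t)^\ast\beta=f^h_t\beta$ via Cartan's formula.

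Next I would set $\tilde u(s,t):=(\psi^H_t)^{-1}\circ u(s,t)$ and rewrite the Floer equation. Writing $u=\psi^H_t\circ\tilde u$ and differentiating, using $\partial_t\psi^H_t=X_{H_t}\circ\psi^H_t$, one obtains $\partial_su=d\psi^H_t(\tilde u)\partial_s\tilde u$ and $\partial_tu-X_{H_t}(u)=d\psi^H_t(\tilde u)\partial_t\tilde u$, so equation \eqref{FloerEq} collapses to
\[\partial_s\tilde u+\tilde J_t(\tilde u)\,\partial_t\tilde u=0,\qquad \tilde J_t:=(\psi^H_t)^\ast J_t.\]
By hypothesis $\tilde J_t$ satisfies \eqref{JCondInfComp} and \eqref{JCondInfDel} near infinity, so $\tilde u$ is a $\tilde J$-holomorphic map into $M\times(0,\infty)$ with standard contact-type almost complex structure.

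Finally, the explicit formula for $(\psi^H_t)^{-1}$ gives
\[r\circ\tilde u(s,t)=g(s,t)\cdot f^h_t\!\bigl((\sigma^h_t)^{-1}\circ v(s,t)\bigr),\]
which is exactly the function in the lemma. The classical maximum principle for pseudo-holomorphic curves with contact-type $J$ on a symplectization (e.g.\ Proposition~4.1 of~\cite{BegOverview-Wendl}, already invoked in the proof of Lemma~\ref{thm:VarSl}) then applies to $r\circ\tilde u$ and gives the conclusion. The main obstacle is bookkeeping rather than conceptual: producing the explicit formula for $\psi^H_t$ and verifying that no Hamiltonian perturbation survives in the $\tilde u$-equation; once those are in hand, the maximum principle is off-the-shelf.
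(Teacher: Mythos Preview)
Your proposal is correct and follows essentially the same route as the paper: define $\tilde u=(\psi^H_t)^{-1}\circ u$, observe that the Hamiltonian term cancels so $\tilde u$ is $(\psi^H_t)^\ast J_t$-holomorphic, identify $r\circ\tilde u$ with the function in the statement, and apply the contact-type maximum principle. The only cosmetic difference is that the paper invokes its own Lemma~\ref{thm:maxprinc} (with $h_{s,t}\equiv 0$) for the final step rather than citing \cite{BegOverview-Wendl} directly.
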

\begin{proof}
Note that, under conditions of the lemma, $(s,t)\mapsto (\psi^H_t)^{-1}\circ u(s,t)$ is a holomorphic curve with respect to the family of almost complex structures $(\psi^H_t)^\ast J_t.$ Lemma~\ref{thm:maxprinc} below implies the maximum principle for
\[r\circ(\psi^H_t)^{-1}\circ u=gf^h_t\left( (\sigma^h_t)^{-1}\circ v\right)\]
Hence, the proof is finished.
\end{proof}

Let $\phi$ and $h$ be as in Definition~\ref{def:FloerDataGen}, and let $(H,J)$ be regular Floer data for $(h,a).$ Using Lemma~\ref{thm:MaxPrincGen}, we can prove Theorems~\ref{Moduli}, \ref{Moduli1} and \ref{Moduli2} in this setting.

The groups $\{HF_\ast(\varphi,H,J)\}_{(H,J)}$ with slope of $H$ equal to $h$ are naturally isomorphic to each other (via the continuation maps). Hence, the group
\[HF_\ast(\phi,h)\]
is well defined. This is consistent with the theory developed in Section~\ref{sec:HF}, i.e. if we take $h$ to be constant, we get the group considered in Section~\ref{sec:HF}.

The existence of the continuation morphisms is more delicate than in Section~\ref{sec:HF}. We are not able to construct the continuation map
\begin{equation}\label{eq:contmapgen}
HF_\ast(\phi,h)\to HF_\ast(\phi,h')
\end{equation}
for two admissible contact Hamiltonians
\[h_t,h_t': M\to \mathbb{R}\]
only under the condition
\[h_t(x)\leqslant h'_t(x)\]
for all $t\in\mathbb{R}$ and $x\in M.$ However, if we assume additionally that the contact Hamiltonians $h_t$ and $h_t'$ generate families of strict contactomorphisms, i.e.
\[dh_t(R^\beta)=dh'_t(R^\beta)=0,\]
Lemma~\ref{thm:maxprinc} below implies that the continuation map \eqref{eq:contmapgen} exists.

\begin{lemma}\label{thm:maxprinc}
Let $h_{s,t}:M\to\mathbb{R}$ be a family of contact Hamiltonians ($s,t\in\mathbb{R}$) such that $dh_{s,t}(R^\beta)=0,$ let $H_{s,t}:M\times (0,\infty)\to \mathbb{R}$ be a Hamiltonian and $J_{s,t}$ a family of almost complex structures on $M.$ Assume that \[H_{s,t}(x,r)=rh_{s,t}(x)\] 
and that $J_s$ satisfies conditions~\eqref{JCondInfComp} and \eqref{JCondInfDel} for each $s\in\mathbb{R}.$ Let $U$ be an open connected subset of $\mathbb{R}^2.$ If the map 
\[u=(v,g)\::\:U\to M\times(0,\infty)\]
is a solution of the Floer equation
\[\partial_s u + J_{s,t}(u) \left( \partial_t u - X_{H_{s,t}}(u)\right)=0\]
and if 
\begin{equation}\label{IncSlopeEq}
\partial_sh_{s,t}\geqslant 0, 
\end{equation} 
then $g$ has no local maxima in $U$ unless it is constant.
\end{lemma}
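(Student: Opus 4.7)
The plan is to show that $g$ is subharmonic, $\Delta g = \partial_s^2 g + \partial_t^2 g \geq 0$ on $U$, and then invoke the classical strong maximum principle for $C^2$ subharmonic functions on a planar open connected set. The role of the hypothesis $dh_{s,t}(R^\beta)=0$ is twofold. First, a direct computation from $\omega = dr\wedge\beta + r\,d\beta$ and $\iota_{X_H}\omega = dH$ gives
\[
X_{H_{s,t}} = Y_{h_{s,t}} + r\,dh_{s,t}(R^\beta)\,\partial_r,
\]
so under the hypothesis $X_{H_{s,t}} = Y_{h_{s,t}}$ is tangent to $M$. Second, the same assumption yields $\iota_{Y_{h_{s,t}}}d\beta = dh_{s,t}$, which drives a crucial cancellation below.

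I would begin by writing $u=(v,g)$ and decomposing the Floer equation along $\partial_r$, $R^\beta$, and $\xi^\beta$, using $J\partial_r = R^\beta$, $JR^\beta = -\partial_r$, and $J\xi^\beta = \xi^\beta$. The $\partial_r$- and $R^\beta$-components, together with $\beta(Y_h) = -h$ from Remark~\ref{rmk:hY}, yield
\[
\partial_s g = \beta(\partial_t v) + h_{s,t}(v), \qquad \partial_t g = -\beta(\partial_s v),
\]
while the $\xi^\beta$-component gives $(\partial_t v)_\xi = (Y_{h_{s,t}})_\xi + J(\partial_s v)_\xi$.

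Next I would differentiate these identities and apply the Cartan-type identity
\[
\partial_s\bigl(v^*\beta(\partial_t)\bigr) - \partial_t\bigl(v^*\beta(\partial_s)\bigr) = v^*d\beta(\partial_s,\partial_t).
\]
The mixed terms $\partial_t\beta(\partial_s v)$ cancel between $\partial_s^2 g$ and $\partial_t^2 g$, leaving
\[
\Delta g = d\beta(\partial_s v, \partial_t v) + (\partial_s h_{s,t})(v) + dh_{s,t}(\partial_s v).
\]
Substituting $(\partial_t v)_\xi = (Y_{h_{s,t}})_\xi + J(\partial_s v)_\xi$, using $d\beta(R^\beta,\cdot)=0$ to drop the $R^\beta$-components, and applying $\iota_{Y_{h_{s,t}}}d\beta = dh_{s,t}$, the off-diagonal piece becomes $d\beta((\partial_s v)_\xi, (Y_{h_{s,t}})_\xi) = -dh_{s,t}(\partial_s v)$, which cancels the last term. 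I expect to be left with
\[
\Delta g = \bigl|(\partial_s v)_\xi\bigr|^2 + (\partial_s h_{s,t})(v) \geq 0,
\]
where the norm is induced by the $\omega$-compatible metric $d\beta(\cdot, J\cdot)$ on $\xi^\beta$, and the inequality uses~\eqref{IncSlopeEq}. The strong maximum principle then finishes the proof.

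The main obstacle I anticipate is not analytic but the careful bookkeeping of the $\xi^\beta \oplus \mathbb{R} R^\beta \oplus \mathbb{R}\partial_r$ decomposition: both the vanishing of the $\partial_r$-component of $X_H$ and the cancellation of the two occurrences of $dh_{s,t}(\partial_s v)$ depend on $dh(R^\beta)=0$ in an essential way. Without this hypothesis one is forced to rescale $g$ by the conformal factor $f^h_t$, as in Lemma~\ref{thm:MaxPrincGen}.
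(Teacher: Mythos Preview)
Your proposal is correct and follows essentially the same route as the paper: both decompose the Floer equation along $\langle\partial_r\rangle\oplus\langle R^\beta\rangle\oplus\xi^\beta$, obtain the same three scalar/$\xi$-equations, and combine the Cartan identity with the relation $\iota_{Y_{h}}d\beta=dh$ (equivalently $d(\beta(Y))+d\beta(Y,\cdot)=0$) to reach $\Delta g=\lvert(\partial_s v)_\xi\rvert^2+(\partial_s h_{s,t})(v)\geq 0$. The only cosmetic difference is the direction of the computation: the paper starts from $0\leq d\beta\bigl(\pi_\beta\partial_s v,\,J\pi_\beta\partial_s v\bigr)$ and works forward to $\Delta g-\partial_s h_{s,t}\circ v$, while you start from $\Delta g$ and work backward to the nonnegative quantity.
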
 
\begin{proof}
We mimic the proof of Proposition~4.1 in \cite{BegOverview-Wendl}. For simplicity, we denote $Y_{h_{s,t}}$ by $Y_{s,t}.$ The condition $dh_{s,t}(R^\beta)=0$ is equivalent to
\begin{equation}
\label{CondYEq}d\left(\beta(Y_{s,t}) \right) +d\beta(Y_{s,t},\cdot)=0.
\end{equation}
It is not difficult to see that \[X_{H_{s,t}}(x,r)=Y_{s,t}(x)\]
for $(x,r)\in M\times (r_0,\infty).$ Using the Floer equation and the splitting $\left\langle \partial_r \right\rangle \oplus \left\langle R^\beta\right\rangle \oplus \xi$ of $T\left(M\times (r_0,\infty)\right),$ we get that $v$ and $g$ satisfy the following system of partial differential equations.
\begin{align}
0&=  \partial_s g-\beta(\partial_t v)+\beta(Y_{s,t})\circ v, \label{MaxPrinEq1}\\
0&=\beta(\partial_s v)+\partial_t g,\label{MaxPrinEq2}\\
0&= \pi_\beta \partial_s v+ J_{s,t}\circ u\: \pi_\beta \partial_t v - J_{s,t}\circ u\: \pi_\beta Y_{s,t}\circ v, \label{MaxPrinEq3}
\end{align}
where $\pi_\beta$ stands for the projection $TM\to \xi^\beta$ (along the Reeb vector field).
Remark~\ref{rmk:Compatibility} implies
\begin{equation}
\begin{split}
0&\leqslant d\beta \left( \pi_\beta \partial _s v, J_{s,t}\circ u\:\pi_\beta\partial_s v \right)\\
&\overset{\eqref{MaxPrinEq3}}{=\joinrel=\joinrel=} d\beta\left(\pi_\beta \partial_s v, \pi_\beta\partial_t v \right) - d\beta\left( \pi_\beta \partial_s v, \pi_\beta Y_{s,t}\circ v\right)\\
&= d\beta(\partial_s v,\partial_t v) - d\beta\left(\partial_s v, Y_{s,t}\circ v\right)\\
&= \partial_s\left( \beta(\partial_t v)\right) -\partial_t \left(\beta (\partial_s v)\right) - d\beta\left(\partial_s v, Y_{s,t}\circ v\right)\\
&\overset{\eqref{MaxPrinEq1}\&\eqref{MaxPrinEq2}}{=\joinrel=\joinrel=\joinrel=\joinrel=\joinrel=\joinrel=} \partial_s\left( \partial_s g+\beta(Y_{s,t})\circ v \right) - \partial_t(-\partial_t g) - d\beta(\partial_s v, Y_{s,t}\circ v)\\
&= \Delta g + \partial_s(\beta(Y_{s,t}))\circ v+ d(\beta (Y_{s,t}))(\partial_s v) + d\beta (Y_{s,t}\circ v, \partial_s v)\\
&\overset{\eqref{CondYEq}}{=\joinrel=\joinrel=} \Delta g + \partial_s(\beta(Y_{s,t}))\circ v\\
&=\Delta g - \partial_sh_{s,t}\circ v.
\end{split}
\end{equation}
This together with \eqref{IncSlopeEq} yields $\Delta g\geqslant 0.$
\end{proof}
\begin{proof}[Proof of Lemma~\ref{thm:maxprinclin}]\label{pf:maxprinclin}
The lemma is a special case of Lemma~\ref{thm:maxprinc} for $h_{s,t}(x)= a(s,t).$
\end{proof}

\begin{proposition}\label{prop:SlopePerGen}
Let $\phi\in\OP{Symp}_c(W,\lambda/d),$ let $h_t:M\to \mathbb{R}$ be an admissible contact Hamiltonian, and let $ K_t:\hat{W}\to\mathbb{R}$ be a Hamiltonian such that $K_t\circ \phi= K_{t+1},$ such that $K_t(x,r)=rk_t(x)$ near infinity for a (1-periodic) contact Hamiltonian $k_t:M\to \mathbb{R},$ and such that $\psi^K_1\in\OP{Symp}_c(W,\lambda/d).$ Then there exists an isomorphism
\[\N(K)\::\: HF(\phi,h)\to HF(\phi\circ \psi^K_1,\tilde{h}),\]
where $\tilde{h}_t:M\to\mathbb{R}$ is defined by
\[\tilde{h}_t(x)=\frac{h_t(\sigma^k_t(x))-k_t(\sigma^k_t(x))}{f^k_t(x)}.\]
\end{proposition}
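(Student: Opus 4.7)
The proof will be a direct adaptation of Lemma~\ref{thm:Nat}, Lemma~\ref{thm:NPhiIntertwined} and Proposition~\ref{prop:nat} to the more flexible slope condition of Definition~\ref{def:FloerDataGen}. Given regular Floer data $(H,J)$ for $(\phi,h)$, I would put
\[G_t:=(\psi^K_t)^{\ast}(H_t-K_t),\qquad \tilde J_t:=(\psi^K_t)^{\ast}J_t,\]
and claim that $(G,\tilde J)$ is regular Floer data for $(\phi\circ\psi^K_1,\tilde h)$, and that the map $\gamma\mapsto (\psi^K)^{\ast}\gamma$ gives a chain-level isomorphism $CF(\phi,H,J,\hcl)\to CF(\phi\circ\psi^K_1,G,\tilde J,(\psi^K)^{\ast}\hcl)$ intertwining continuation morphisms. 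The required isomorphism on $HF$ then falls out by passing to direct limits and summing over free homotopy classes.

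The first key step is the slope computation. Near infinity $K_t=rk_t(x)$, and one checks directly that the ansatz
\[\psi^K_t(x,r)=\bigl(\sigma^k_t(x),\,r/f^k_t(x)\bigr)\]
preserves $\lambda=r\beta$ exactly and therefore is the Hamiltonian flow of $K$ near infinity (Cartan's formula). Substituting,
\[G_t(x,r)=(H_t-K_t)\bigl(\sigma^k_t(x),r/f^k_t(x)\bigr)=\frac{r}{f^k_t(x)}\bigl(h_t(\sigma^k_t(x))-k_t(\sigma^k_t(x))\bigr)=r\tilde h_t(x),\]
which is exactly the slope asserted in the proposition. For the almost complex structure condition in Definition~\ref{def:FloerDataGen}, the nontrivial identity $\psi^G_t=(\psi^K_t)^{-1}\circ\psi^H_t$ (verified by differentiating and using $(\psi^K)^{\ast}(H-K)=\overline K\#H$ in the paper's notation) gives
\[(\psi^G_t)^{\ast}\tilde J_t=(\psi^G_t)^{\ast}(\psi^K_t)^{\ast}J_t=(\psi^K_t\circ\psi^G_t)^{\ast}J_t=(\psi^H_t)^{\ast}J_t,\]
which satisfies \eqref{JCondInfComp} and \eqref{JCondInfDel} by assumption on $J$. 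The twisting conditions on $G$ and $\tilde J$ by $\phi\circ\psi^K_1$ follow, as in Lemma~\ref{thm:Nat}, from $\psi^K_t\circ\phi\circ\psi^K_1=\phi\circ\psi^K_{t+1}$. Finally $\tilde h$ is admissible: $\psi^K_1\in\OP{Symp}_c(W,\lambda/d)$ forces $\sigma^k_1=\OP{id}$ and $f^k_1=1$ near infinity, so $\sigma^{\tilde h}_1=(\sigma^k_1)^{-1}\circ\sigma^h_1=\sigma^h_1$ is strict and fixed-point-free.

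Granted Step~1, the rest is formal. The bijections $\gamma\leftrightarrow (\psi^K)^{\ast}\gamma$ on twisted loops and $u\leftrightarrow (\psi^K_\cdot)^{-1}\circ u$ on Floer cylinders work verbatim as in Lemma~\ref{thm:Nat}: they exchange the Floer equations for $(H,J)$ and $(G,\tilde J)$, they preserve regularity, and the mapping-torus argument shows $N_\hcl=N_{(\psi^K)^{\ast}\hcl}$ and that the relative Maslov index is preserved. Thus $\N(K)$ is an isomorphism of chain complexes. The analog of Lemma~\ref{thm:NPhiIntertwined}, proved by applying the same bijection to the $s$-dependent Floer equation, shows that $\N(K)$ commutes with continuation maps between regular data of the same slope, so it descends to an isomorphism of the inverse systems defining $HF(\phi,h)$ and $HF(\phi\circ\psi^K_1,\tilde h)$.

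The main obstacle compared to Proposition~\ref{prop:nat} is that $K$ is now not linear in $r$, so the flow $\psi^K_t$ nontrivially rescales the radial coordinate by $1/f^k_t$. This is precisely what produces the $f^k_t$ denominator in the definition of $\tilde h$ and is why the $J$-condition in Definition~\ref{def:FloerDataGen} is phrased in terms of $(\psi^H)^{\ast}J$ rather than $J$ directly; the identity $(\psi^G)^{\ast}\tilde J=(\psi^H)^{\ast}J$ makes this invariance machinery compatible with the naturality transformation. Once these slope and $J$ identifications are in hand, the argument is a routine re-run of the already established naturality lemmas.
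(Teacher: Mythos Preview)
Your proposal is correct and follows the paper's own approach: the paper's proof is literally the two sentences ``The proof is analogous to the proof of Proposition~\ref{prop:nat}. Note that the Hamiltonian $(\psi^K)^\ast(H-K)$ has the slope equal to $\tilde h_t$,'' and you have written out in detail exactly what that entails---the explicit form of $\psi^K_t$ on the end, the slope computation producing the $f^k_t$ denominator, the verification of the pulled-back $J$-condition via $(\psi^G)^\ast\tilde J=(\psi^H)^\ast J$, admissibility of $\tilde h$, and the chain-level bijections. A small terminological quibble: $HF(\phi,h)$ for a fixed slope $h$ is not defined via a direct (or inverse) limit but via the canonical continuation isomorphisms of Corollary~\ref{SlopeIso}, so ``descends to the system of continuation isomorphisms'' would be more accurate than ``direct limits'' or ``inverse systems.''
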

\begin{proof}
The proof is analogous to the proof of Proposition~\ref{prop:nat}. Note that the Hamiltonian $\left(\psi^K\right)^\ast (H-K)$ has the slope equal to $\tilde{h}_t,$ where $H_t:\hat{W}\to\mathbb{R}$ is a Hamiltonian with the slope $h_t.$
\end{proof}
The groups $HF(\phi, h)$ are invariant (up to isomorphism) under compactly supported symplectic isotopies of $\phi.$
\subsection{A homotopy long exact sequence}
In this section, we describe a long exact sequence due to Giroux \cite{BiranGiroux}, which provides further examples of elements in $\OP{Symp}_c(W,\omega).$ The fibered Dehn twist can be seen as a special case of this construction (see Proposition~\ref{prop:DTandLongES} below).

\begin{lemma}\label{thm:corr}
Let $\phi\in\OP{Symp}(\hat{W},\lambda/d).$ Then there is a contactomorphism $\phi_M:M\to M$ and a function $f:M\to (0,\infty)$ such that
\[\phi_M^\ast\beta=f\beta\]
and
\[\phi(x,r)=\left(\phi_M(x),\frac{r}{f(x)}\right)\]
for all $x\in M$ and all $r\in (0,\infty)$ large enough.
\end{lemma}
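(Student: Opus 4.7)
The plan is to exploit the fact that an exact symplectomorphism $\phi$ preserves the Liouville form $\lambda$ outside a compact subset (since $\phi^\ast\lambda-\lambda = dF_\phi$ is compactly supported), hence preserves the Liouville vector field $X_\lambda = r\partial_r$ and commutes with its flow there. The flow of $X_\lambda$ on $M\times(0,\infty)$ is $\rho_t(x,r) = (x, e^t r)$, which acts transitively on each fiber $\{x\}\times(0,\infty)$. This commutation will let me recover $\phi$ on an entire neighborhood of infinity from its restriction to a single level set $\{r=c\}$.

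More concretely, I would proceed as follows. First, pick $r_0$ large enough that $\phi^\ast\lambda = \lambda$ on $M\times[r_0,\infty)$; from $X_\lambda\lrcorner\omega = \lambda$ together with $\phi^\ast\omega = \omega$, it follows that $\phi_\ast X_\lambda = X_\lambda$ there, so $\phi\circ\rho_t = \rho_t\circ\phi$ for all $t\geqslant 0$ on this region. Next, since $W$ is compact, so is $\phi^{-1}(W)$; thus one can choose $c > r_0$ so that $M\times\{c\}$ is disjoint from $\phi^{-1}(W)$, which is equivalent to $\phi(M\times\{c\})\subset \hat W\setminus W = M\times(0,\infty)$. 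Writing $\phi(x,c) = (\phi_M(x), s(x))$ for smooth $\phi_M: M\to M$ and $s: M\to(0,\infty)$, the commutation formula gives, for every $r\geqslant c$,
\[
\phi(x,r) \;=\; \phi\bigl(\rho_{\log(r/c)}(x,c)\bigr) \;=\; \rho_{\log(r/c)}\bigl(\phi_M(x), s(x)\bigr) \;=\; \Bigl(\phi_M(x),\, \tfrac{r\, s(x)}{c}\Bigr).
\]
Setting $f(x) := c/s(x) > 0$ puts this in the desired form $\phi(x,r) = (\phi_M(x), r/f(x))$.

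Substituting this expression back into the identity $\phi^\ast\lambda = \lambda$ on $M\times[c,\infty)$ yields $(r/f(x))\,\phi_M^\ast\beta = r\beta$, hence $\phi_M^\ast\beta = f\beta$, which together with $f>0$ shows that $\phi_M$ is a co-orientation preserving contactomorphism. Smoothness of $\phi_M$ and $f$ is automatic, and applying the same argument to $\phi^{-1}$ (which is also exact, by the inverse formula of Definition~\ref{def:exactsymp}) shows that $\phi_M$ is a diffeomorphism. The only mild subtlety is ensuring $\phi(M\times\{c\})\subset M\times(0,\infty)$ so that the formula $(\phi_M(x),s(x))$ makes sense; this is purely a matter of taking $c$ large, using compactness of $W$ and continuity of $\phi^{-1}$, and is the step one must not forget.
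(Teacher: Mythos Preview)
Your proposal is correct and follows essentially the same argument as the paper: both use that $\phi$ preserves $\lambda$ near infinity, hence preserves the Liouville vector field $X_\lambda=r\partial_r$ and commutes with its flow, from which the product form $\phi(x,r)=(\phi_M(x),r/f(x))$ and the relation $\phi_M^\ast\beta=f\beta$ are read off. The only cosmetic difference is that you fix a level $\{r=c\}$ and propagate by the flow, whereas the paper writes $\phi(x,r)=(\sigma(x,r),g(x,r))$ and deduces $r$-independence of $\sigma$ directly; you are also a bit more explicit about why the image lands in $M\times(0,\infty)$ and why $\phi_M$ is a diffeomorphism.
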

\begin{proof}
There exists $r_0\in (0,\infty)$ such that the restriction of $\phi$ to $M\times (r_0,\infty)$ is a $\lambda$-preserving embedding $M\times (r_0,\infty)\to M\times(0,\infty).$ Let 
\[\phi(x,r)=(\sigma(x,r),g(x,r))\in M\times(0,\infty)\]
for $(x,r)\in M\times(r_0,\infty).$ Since $\phi$ preserves $\lambda$ and $\omega,$ it preserves the vector field $X_\lambda=r\partial_r,$ because $X_\lambda$ is characterized by the relation $X_\lambda\lrcorner \omega=\lambda.$ Thus, $\phi$ commutes with the flow of $r\partial_r,$ i.e. 
\[\left(\sigma\left(x,e^tr\right),g\left(x,e^tr\right)\right)= \left(\sigma\left(x,r\right),e^tg\left(x,r\right)\right).\]
This implies that $\sigma(x,r)$ does not depend on $r.$ The map $\sigma(\cdot,r),$ denoted by $\phi_M,$ is a contactomorphism, because
\[r\beta=\phi^\ast(r\beta)= g(x,r)\phi_M^\ast \beta\] 
implies 
\[\phi^\ast_M=\frac{r}{g(x,r)}\beta\]
and $\frac{r}{g(x,r)}$ is a positive function. Another consequence is that $\frac{r}{g(x,r)}=:f(x)$ does not depend on $r.$ Hence, the proof is finished.
\end{proof}

\begin{definition}\label{def:irm}
In the view of Lemma~\ref{thm:corr}, one can associate a contactomorphism ${\phi_M: M\to M}$ to every exact symplectomorphism $\phi:\hat{W}\to \hat{W}.$ This gives rise to the homomorphism 
\[\Theta\::\:\OP{Symp}\left(\hat{W},\lambda/d\right)\to \OP{Cont}\left(M,\xi^\beta\right)\::\: \phi\mapsto \phi_M,\]
which we call the \textbf{ideal restriction map}.
\end{definition}

Let $\OP{Symp}(W,\lambda/d)$ be the group of symplectomorphisms \[\varphi:\widehat{W}\to \widehat{W}\]
such that $\varphi^\ast\lambda-\lambda= dF$ for a function $F:\widehat{W}\to \mathbb{R}$ compactly supported in $W-M.$ It turns out that the ideal restriction map of Definition~\ref{def:irm}
\[\Theta\::\:\OP{Symp}(W,\lambda/d)\to \OP{Cont}(M,\xi^\beta)\]
is a Serre fibration \cite{BiranGiroux} with fibre over the identity equal to 
\[\OP{Symp}_c(W,\lambda/d):=\left\lbrace \varphi:W\to W\:|\:\varphi^\ast\lambda-\lambda \text{ is exact }\& \OP{supp} \varphi\in W-M \right\rbrace.\]
Hence, there is a homotopy long exact sequence 
\begin{equation}\label{LESEq}
\begin{tikzcd}[column sep=small]
\cdots \arrow{r} & \pi_k\OP{Symp}_c(W,\lambda/d) \arrow{r}
& \pi_k\OP{Symp}(W,\lambda/d) \arrow{r}
\arrow[draw=none]{d}[name=Z, shape=coordinate]{}
& \pi_k\OP{Cont}(M,\xi^\beta) \arrow[rounded corners,
to path={ -- ([xshift=2ex]\tikztostart.east)
|- (Z) [near end]\tikztonodes
-| ([xshift=-2ex]\tikztotarget.west)
-- (\tikztotarget)}]
{dll}{\Delta} \\
 \arrow[draw=none]{r} & \pi_{k-1}\OP{Symp}_c(W,\lambda/d)\arrow{r}
& \cdots.
\end{tikzcd}
\end{equation}
In particular, there is a connecting homomorphism
\begin{equation}\label{DeltaBGEq}
\Delta\::\:\pi_1\OP{Cont}(M,\xi^\beta) \to \pi_0\OP{Symp}_c(W,\lambda/d).
\end{equation} 
In view of Lemma~\ref{thm:HomEquiv}, the inclusion 
\[\OP{Symp}_c(W,\lambda/d)\to \OP{Symp}_c(W,\omega)\]
induces isomorphism in homotopy groups, so that the groups $\pi_i\OP{Symp}_c(W,\lambda/d)$ in \eqref{LESEq} and \eqref{DeltaBGEq} can be replaced by $\pi_i\OP{Symp}_c(W,\omega).$

\begin{proposition}\label{prop:DTandLongES}
Assume the Reeb flow $\sigma^\beta_t$ is 1-periodic. Then the image $\Delta([\sigma])$ of the class $[\sigma]\in\pi_1\OP{Cont}(M,\xi^\beta)$ under the connecting homomorphism $\Delta$ is represented by the fibered Dehn twist.
\end{proposition}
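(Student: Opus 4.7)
The plan is to exhibit an explicit lift of the Reeb-flow loop $t\mapsto\sigma^\beta_t$ through the Serre fibration $\Theta$ and identify its endpoint with the fibered Dehn twist $\T$. Recall that the connecting homomorphism $\Delta$ of a Serre fibration sends a loop $[\gamma]$ based at the identity to the endpoint of any path in the total space starting at the identity and projecting to $\gamma$; the endpoint automatically lies in the fiber $\OP{Symp}_c(W,\lambda/d)$ and its $\pi_0$-class depends only on $[\gamma]$.

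My candidate lift is the Hamiltonian isotopy $\{\psi^K_t\}_{t\in[0,1]}$ for $K:\hat{W}\to\mathbb{R}$ as in Remark~\ref{HamGenFib}: $K$ vanishes on the core of $W$, equals $-V(r)$ in the collar, and is linear ($K(x,r)=r+\mathrm{const}$) near infinity. A direct computation using the paper's convention $dH=\omega(X_H,\cdot)$ gives $X_K=v(r)R^\beta$ in the collar, so $\psi^K_t$ preserves $r$ and acts by a Reeb rotation on $M$: explicitly, $\psi^K_t(x,r)=(\sigma^\beta_{tv(r)}(x),r)$, which becomes $(\sigma^\beta_{\pm t}(x),r)$ for $r$ large. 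By Lemma~\ref{thm:corr}, this means $(\psi^K_t)_M=\sigma^\beta_{\pm t}$ with $f\equiv 1$, so $\Theta\circ\psi^K$ is precisely the Reeb-flow loop $[\sigma]$ (up to orientation). Combined with $\psi^K_0=\OP{id}$ and $\psi^K_1=\T$ from Remark~\ref{HamGenFib}, this shows $\{\psi^K_t\}$ is the desired lift and yields $\Delta([\sigma])=[\T]$ in $\pi_0\OP{Symp}_c(W,\lambda/d)$.

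The main obstacle will be verifying that $\psi^K_t\in\OP{Symp}(W,\lambda/d)$ for every $t\in[0,1]$, i.e., that the primitive $F_{\psi^K_t}$ of $(\psi^K_t)^*\lambda-\lambda$ can be chosen with compact support in $W-M$ throughout the isotopy (not merely at the endpoint $t=1$, where one recovers $\T$). This amounts to an explicit computation via $F_{\psi^K_t}=\int_0^t(\psi^K_s)^*\bigl(K+\lambda(X_K)\bigr)ds$ together with the observation that $K+\lambda(X_K)=rv(r)-V(r)$ in the collar is a function of $r$ alone, hence invariant under $\psi^K_s$ (which preserves $r$); one must then check that the resulting formula for $F_{\psi^K_t}$ admits a compactly supported representative for every $t$. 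A secondary, purely bookkeeping issue is the sign convention: with $dH=\omega(X_H,\cdot)$, the Hamiltonian $H=r$ generates the \emph{inverse} Reeb flow, so one must choose the sign of $K$ in accordance with the orientation of $[\sigma]$ to land on $\T$ rather than $\T^{-1}$.
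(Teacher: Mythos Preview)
Your approach is correct and close in spirit to the paper's, though more direct. The paper factors the argument through two standalone lemmas: Lemma~\ref{thm:ImConnTimeOne} builds a Hamiltonian lift of an arbitrary contactomorphism loop using a cutoff function $\chi$, and Lemma~\ref{thm:SameNearInf} shows that any two Hamiltonians agreeing near infinity whose time-$1$ maps are compactly supported represent the same class in $\pi_0\OP{Symp}_c(W,\omega)$; combining these with Remark~\ref{HamGenFib} yields the proposition. You instead take the Hamiltonian $K$ of Remark~\ref{HamGenFib} itself as the lift, which eliminates the need for Lemma~\ref{thm:SameNearInf} altogether. The paper's packaging pays off later, since Lemma~\ref{thm:ImConnTimeOne} is reused verbatim in the proof of Theorem~\ref{thm:SlopePeriodicityGen}; your route is shorter and more transparent for this single statement. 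Your flagged ``main obstacle'' does resolve cleanly: since $K+\lambda(X_K)=rv(r)-V(r)$ is constant for $r\geqslant 0.95$, normalizing by that constant makes $F_{\psi^K_t}=t\bigl(rv(r)-V(r)\bigr)+C$ vanish on $M\times[0.95,\infty)$ and leaves it supported in a compact subset of the interior of $W$, so indeed $\psi^K_t\in\OP{Symp}(W,\lambda/d)$ for every $t$. (The paper's proof of Lemma~\ref{thm:ImConnTimeOne} leaves the analogous verification implicit.) Your caution about the sign is also warranted and is glossed over in the paper's one-line deduction.
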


\begin{lemma}\label{thm:ImConnTimeOne}
Let $\sigma_t: M\to M$ be a 1-periodic family of  contactomorphisms generated by a vector field $Y_t.$ Then the family $\{\sigma_t\}$ determines an element of $\pi_1 \OP{Cont}(M,\xi^\beta)$ whose image under the connecting homomorphism $\Delta$ can be represented by the time-1 map of a (non-compactly supported) Hamiltonian $\hat{W}\to\mathbb{R}$ that is equal $-r\beta(Y_t)$ near infinity. 
\end{lemma}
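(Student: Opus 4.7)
The plan is to extract $\Delta([\sigma])$ from its path-lifting definition in the Serre fibration $\Theta\colon\OP{Symp}(W,\lambda/d)\to\OP{Cont}(M,\xi^\beta)$: one lifts the loop $\{\sigma_t\}$ to a path $\{\tilde\sigma_t\}$ in the total space with $\tilde\sigma_0=\OP{id}$, so that $\tilde\sigma_1$ lands in the fibre $\OP{Symp}_c(W,\lambda/d)$ and represents $\Delta([\sigma])$ by definition. After replacing $\sigma_t$ by $\sigma_0^{-1}\sigma_t$ if necessary, I would assume $\sigma_0=\OP{id}$. Set $h_t:=-\beta(Y_t)$, so that $Y_{h_t}=Y_t$ by Remark~\ref{rmk:hY}; pick $\epsilon_0\in(0,1)$ with $\epsilon_0<1/\max_{t\in[0,1],\,x\in M}f_t^h(x)$ (the maximum exists by compactness); and choose a smooth Hamiltonian $H\colon\mathbb{R}\times\hat W\to\mathbb{R}$ that coincides with $rh_t=-r\beta(Y_t)$ on $M\times[\epsilon_0,\infty)\subset\hat W$ and is extended smoothly (via a cutoff in $r$) over the compact core of $W$. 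My candidate lift is $\tilde\sigma_t:=\psi_t^H$.

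The first key step is to write down the flow on the symplectization explicitly. From $\omega=dr\wedge\beta+r\,d\beta$ and the splitting $T(M\times(0,\infty))=\langle\partial_r\rangle\oplus\langle R^\beta\rangle\oplus\xi^\beta$, I would verify
\[
X_{rh_t}(x,r)=Y_{h_t}(x)+r\,dh_t(R^\beta)\,\partial_r,
\]
and then solve the resulting ODE to get $\psi_t^H(x,r)=\bigl(\sigma_t^h(x),\,r/f_t^h(x)\bigr)$ wherever the trajectory stays in $M\times[\epsilon_0,\infty)$. This shows both $\Theta(\psi_t^H)=\sigma_t^h=\sigma_t$ and that $\psi_t^H$ preserves $\lambda=r\beta$ on that region. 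The choice of $\epsilon_0$ guarantees that every trajectory starting in $M\times[1,\infty)$ remains inside $M\times[\epsilon_0,\infty)$ throughout $t\in[0,1]$, since $r\geqslant 1$ forces $r/f_t^h(x)\geqslant 1/\max f_t^h>\epsilon_0$.

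The second step is to verify $\psi_t^H\in\OP{Symp}(W,\lambda/d)$ and that $\psi_1^H$ is compactly supported. Using $\lambda(X_{H_t})=r\beta(X_{H_t})=-rh_t=-H_t$ on $M\times[\epsilon_0,\infty)$, the standard action primitive
\[
F_t:=\int_0^t(\psi_s^H)^\ast\bigl(H_s+\lambda(X_{H_s})\bigr)\,ds
\]
of $(\psi_t^H)^\ast\lambda-\lambda$ has a vanishing integrand on any trajectory contained in $M\times[\epsilon_0,\infty)$; by the preceding paragraph this is the case on $M\times[1,\infty)$, so $F_t$ vanishes there and is therefore compactly supported in $W-M$, as required. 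Finally, $\sigma_1=\OP{id}$ forces $f_1^h\equiv 1$, whence $\psi_1^H=\OP{id}$ on $M\times[1,\infty)$, so $\psi_1^H\in\OP{Symp}_c(W,\lambda/d)$. By the definition of the connecting homomorphism $\Delta([\sigma])=[\psi_1^H]$, and $\psi_1^H$ is by construction the time-$1$ map of a Hamiltonian equal to $-r\beta(Y_t)$ near infinity. The principal technical nuisance is the bookkeeping with $\epsilon_0$ that keeps trajectories from $M\times[1,\infty)$ inside the symplectization model; once that is in place, all remaining calculations are routine Hamiltonian-action manipulations.
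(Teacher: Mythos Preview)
Your approach is essentially the same as the paper's: lift the loop $\{\sigma_t\}$ through the Serre fibration $\Theta$ by the Hamiltonian flow of a cut-off of $-r\beta(Y_t)$, and read off $\Delta([\sigma])$ as the time-$1$ map. The paper simply writes the Hamiltonian as $-\chi(r)\,r\beta(Y_t)$ for a cut-off $\chi$ and asserts that $\psi_t^H$ is a lift, whereas you unpack the flow formula and the exactness primitive $F_t$ explicitly; this extra care is fine and the argument is sound.

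One small technical slip: showing that $F_t$ and $\psi_1^H-\OP{id}$ vanish on $M\times[1,\infty)$ only yields support in $W$, not compact support in the \emph{open} interior $W-M$ as the definitions of $\OP{Symp}(W,\lambda/d)$ and $\OP{Symp}_c(W,\lambda/d)$ demand. Your own set-up already contains the fix: since $\epsilon_0\max f_t^h<1$, the same estimate shows that every trajectory starting in $M\times[\epsilon_0\max f_t^h,\infty)$ remains inside $M\times[\epsilon_0,\infty)$, so $F_t$ and $\psi_1^H-\OP{id}$ in fact vanish on a neighbourhood of $M$. With that adjustment the proof goes through and matches the paper's.
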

\begin{proof}
The element in $\pi_1\OP{Cont}(M,\xi^\beta)$ determined by $\{\sigma_t\}$ is denoted (by a slight abuse of notation) by $[\sigma].$ By definition, $\Delta([\sigma])$ is equal to $[\psi_1]\in \pi_0\OP{Symp}_c(W,\omega),$ where $\psi_t$ is a path in $\OP{Symp}(W,\lambda/d)$ such that $\Theta(\psi_t)=\sigma_t$ (see Definition~\ref{def:irm}). Let $\chi :(0,\infty)\to \mathbb{R}$ be a smooth function equal 0 near 0 and 1 on $[1,\infty).$ We can choose $\psi_t$ from above to be $\psi^H_t$ where $H:\hat{W}\to \mathbb{R}$ is the Hamiltonian that is equal to
\[(x,r)\mapsto -\chi(r)r\beta(Y_t)\]
on $M\times(0,\infty)$ and $0$ elsewhere. This finishes the proof.
\end{proof}
\begin{lemma}\label{thm:SameNearInf}
Let $H_t$ and $K_t$ be two Hamiltonians on $\hat{W}$ that are equal near infinity and whose isotopies are well-defined for all times. If the maps  $\psi^H_1$ and $\psi_1^K$ are compactly supported in $W-M,$ then they represent the same class in $\pi_0\OP{Symp}_c(W,\omega).$
\end{lemma}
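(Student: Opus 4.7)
The plan is to reduce to showing that $\phi := \psi^H_1 \circ (\psi^K_1)^{-1}$, which lies in $\OP{Symp}_c(W,\omega)$, is isotopic to the identity inside $\OP{Symp}_c(W,\omega)$, and then to exhibit such an isotopy by combining a natural Hamiltonian path with the retraction of Lemma~\ref{thm:HomEquiv}.

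Using the composition formula $\psi^H_t\circ\psi^{\overline{K}}_t = \psi^{H\#\overline{K}}_t$ from page~\pageref{notation:sharp}, I would write $\phi = \psi^G_1$ where $G_t := (H \# \overline{K})_t = H_t - K_t\bigl(\psi^K_t \circ (\psi^H_t)^{-1}(\cdot)\bigr)$. Since $H$ and $K$ coincide near infinity, the Hamiltonian vector fields and hence the flows $\psi^H_t$ and $\psi^K_t$ coincide there, so $G_t$ vanishes near infinity; thus $G$ is compactly supported on $\hat{W}$, and the flow $\{\psi^G_t\}_{t\in[0,1]}$ provides a Hamiltonian isotopy from $\OP{id}$ to $\phi$ through symplectomorphisms of $\hat{W}$ that are the identity near infinity.

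This path need not stay in $\OP{Symp}_c(W,\omega)$ for intermediate $t$, since $G_t$ may be non-zero in a neighbourhood of $M$, so the time-$t$ map of $\psi^G_t$ may fail to be the identity near $\partial W$. To push the path back into the subgroup I would apply the retraction $T:\varphi \mapsto \varphi\circ\psi^\varphi_1$ from the proof of Lemma~\ref{thm:HomEquiv} (with $Y^\varphi$ defined by $\lambda-\varphi^*\lambda=\omega(Y^\varphi,\cdot)$) pointwise along the path. For each $t$, $\psi^G_t$ is exact and equals the identity near infinity, so $Y^{\psi^G_t}$ is compactly supported on $\hat{W}$; consequently $t\mapsto T(\psi^G_t)$ is a continuous family of exact symplectomorphisms with $T(\OP{id}) = \OP{id}$ and $T(\phi)$ at the endpoints, and the endpoint $T(\phi)$ is isotopic to $\phi$ in $\OP{Symp}_c(W,\omega)$ by the homotopy $j\circ T \simeq \OP{id}$ established in the proof of Lemma~\ref{thm:HomEquiv}.

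The main obstacle is verifying that $T(\psi^G_t)$ actually takes values in $\OP{Symp}_c(W,\lambda/d)$, i.e.\ is compactly supported in $W-M$ at every $t$, rather than merely in $\OP{Symp}(\hat{W},\lambda/d)$. This amounts to showing that the auxiliary flow $\psi^{\psi^G_t}_1$ cancels the boundary behaviour of $\psi^G_t$ near $M$, and is the step that requires a delicate use of exactness and of the identification $\lambda-\varphi^*\lambda = \omega(Y^\varphi,\cdot)$ near $\partial W$. Once this is confirmed, concatenating the corrected path in $\OP{Symp}_c(W,\lambda/d)\subset\OP{Symp}_c(W,\omega)$ with the canonical isotopy from $T(\phi)$ back to $\phi$ in $\OP{Symp}_c(W,\omega)$ produces the required isotopy from $\OP{id}$ to $\phi$, and translating back gives $[\psi^H_1]=[\psi^K_1]$ in $\pi_0\OP{Symp}_c(W,\omega)$.
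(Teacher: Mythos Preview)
Your approach has a genuine gap precisely at the step you flag as ``the main obstacle.'' The retraction $T(\varphi)=\varphi\circ\psi^\varphi_1$ from Lemma~\ref{thm:HomEquiv} corrects \emph{exactness}, not \emph{support}. Indeed, outside $\OP{supp}\varphi$ one has $\lambda-\varphi^\ast\lambda=0$, hence $Y^\varphi=0$ there, so $\psi^\varphi_1=\OP{id}$ outside $\OP{supp}\varphi$ and consequently $\OP{supp} T(\varphi)\subset\OP{supp}\varphi$ with no mechanism to shrink it further. In your situation $\psi^G_t$ is already exact (it is the time-$t$ map of a compactly supported Hamiltonian), so applying $T$ is essentially idle: it produces another exact symplectomorphism with the same support, which may still spill over $M$. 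The hoped-for cancellation near $\partial W$ simply does not occur.

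The paper instead uses the Liouville flow $\phi^\lambda_s$ to \emph{shrink} the support. Since $G=H\#\overline{K}$ is compactly supported in $\hat{W}$, there is $s>0$ such that conjugation $(\phi^\lambda_s)^{-1}\circ\psi^G_t\circ\phi^\lambda_s$ is supported in $W-M$ for all $t\in[0,1]$; this gives an isotopy in $\OP{Symp}_c(W,\omega)$ between the conjugates of $\psi^H_1$ and $\psi^K_1$. One then observes that for each fixed endpoint $\psi^H_1\in\OP{Symp}_c(W,\omega)$ the family $t\mapsto(\phi^\lambda_t)^{-1}\circ\psi^H_1\circ\phi^\lambda_t$ stays in $\OP{Symp}_c(W,\omega)$ and connects $\psi^H_1$ to its conjugate (and likewise for $K$). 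The Liouville flow is the missing ingredient: it is exactly the tool that contracts supports toward the skeleton, which the map $T$ cannot do.
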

\begin{proof}
The Hamiltonian $H\#\overline{K}$ (see page~\pageref{notation:sharp}) is compactly supported and generates the Hamiltonian isotopy $\psi_t^H\circ\left(\psi_t^K\right)^{-1}$ (compactly supported as well). This isotopy, however, may not be supported in the interior of $W$ for all times. Let $\phi^\lambda_t$ be the family of diffeomorphisms of $\hat{W}$ that is generated by the Liouville vector field $X_\lambda$ (see section~\ref{sec:Conv}). The diffeomorphism $\phi^\lambda_t$ is not a symplectomorphism (except for $s=0$), however if we conjugate a symplectomorphism by it we get a symplectomorphism. Since the isotopy $\psi_t^H\circ\left(\psi_t^K\right)^{-1}$ is compactly supported, there exists $s\in (0,\infty)$ such that $\left(\phi^\lambda_s\right)^{-1}\circ\psi^H_t\circ\left(\psi^K\right)^{-1}\circ \phi^\lambda_s$ is compactly supported in the interior of $W.$ In particular, the maps $\left(\phi^\lambda_s\right)^{-1}\circ \psi^H_1\circ\phi^\lambda_s$ and $\left(\phi^\lambda_s\right)^{-1}\circ \psi^K_1\circ\phi^\lambda_s$ represent the same class in $\pi_0\OP{Symp}_c(W,\omega).$ Note that $\left(\phi^\lambda_t\right)^{-1}\circ \psi^H_1\circ\phi^\lambda_t\in \OP{Symp}_c(W,\omega)$ for all $t\in (0,\infty).$ Hence $\left(\phi^\lambda_s\right)^{-1}\circ \psi^H_1\circ\phi^\lambda_s$ and $\psi^H_1$ represent the same class in $\pi_0\OP{Symp}_c(W,\omega).$ Similarly, the same holds for $\left(\phi^\lambda_s\right)^{-1}\circ \psi^K_1\circ\phi^\lambda_s$ and $\psi^K_1,$ and the proof is finished.
\end{proof}
\begin{proof}[Proof of Proposition~\ref{prop:DTandLongES}]
The proposition follows from Lemma~\ref{thm:ImConnTimeOne} (applied in the case $Y_t=R^\beta$), Lemma~\ref{thm:SameNearInf} and Remark~\ref{HamGenFib}.
\end{proof}

\begin{theorem}\label{thm:SlopePeriodicityGen}
Let $\sigma_t:M\to M$ be a 1-periodic family of contactomorphisms generated by a vector field $Y_t,$ and let $f_t:M\to(0,\infty)$ be the function defined by $\sigma^\ast\beta=f_t\beta.$ Denote by $[\sigma]$ the element of $\pi_1\OP{Cont}(M,\xi^\beta)$ determined by $\{\sigma_t\}.$ If the image $\Delta([\sigma])\in \OP{Symp}_c(W,\lambda/d)$ of $[\sigma]$ under the connecting homomorphism $\Delta$ is trivial, then
\[HF(\OP{id},h)\cong HF(\OP{id},\tilde{h}).\]
Here $h_t:M\to\mathbb{R}$ is an admissible contact Hamiltonian and $ \tilde{h}:M\to \mathbb{R}$ is defined by
\[\tilde{h}_t(x)=\frac{h_t(\sigma_t(x))+\beta(Y_t(x))}{f_t(x)},\quad x\in M.\]
\begin{proof}
The proof follows from Proposition~\ref{prop:SlopePerGen}, Lemma~\ref{thm:ImConnTimeOne} and the invariance under compactly supported symplectic isotopies.
\end{proof}
\end{theorem}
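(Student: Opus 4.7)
The plan is to imitate the proof of Theorem~\ref{thm:SlopePeriodicity}, with Proposition~\ref{prop:nat} replaced by its variable-slope refinement (Proposition~\ref{prop:SlopePerGen}) and Theorem~\ref{IsoInv} replaced by the variable-slope isotopy-invariance statement recorded at the end of Subsection~4.1. The only extra ingredient is a Hamiltonian on $\hat{W}$ whose time-1 map realises $\Delta([\sigma])$, which is exactly what Lemma~\ref{thm:ImConnTimeOne} supplies.

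First I would invoke Lemma~\ref{thm:ImConnTimeOne} to obtain a $1$-periodic Hamiltonian $K:\mathbb{R}\times\hat{W}\to\mathbb{R}$ equal to $-r\beta(Y_t(x))$ near infinity and satisfying $[\psi_1^K]=\Delta([\sigma])$ in $\pi_0\OP{Symp}_c(W,\lambda/d).$ The contact-Hamiltonian slope of $K$ is then
\[k_t(x):=-\beta(Y_t(x)),\]
and by Remark~\ref{rmk:hY} one has $Y_{k_t}=Y_t$, so the contact isotopy generated by $k_t$ coincides with $\sigma_t$; in particular $\sigma_t^k=\sigma_t$ and $f_t^k=f_t$.

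Next I would feed $K$ into Proposition~\ref{prop:SlopePerGen} with $\phi=\OP{id}$, obtaining a naturality isomorphism
\[\N(K)\::\:HF(\OP{id},h)\xrightarrow{\cong}HF(\psi_1^K,\tilde{h}),\]
where substituting $\sigma_t^k=\sigma_t$, $f_t^k=f_t$ and $k_t=-\beta(Y_t)$ into the formula of Proposition~\ref{prop:SlopePerGen} reproduces the $\tilde{h}$ of the theorem. Since by hypothesis $\Delta([\sigma])=1$ in $\pi_0\OP{Symp}_c(W,\lambda/d),$ the map $\psi_1^K$ is compactly-supported-isotopic to $\OP{id}$ through $\OP{Symp}_c(W,\lambda/d),$ and the variable-slope isotopy invariance of $HF(\cdot,h)$ recorded at the end of Subsection~4.1 (whose construction parallels Section~\ref{sec:InvIso}) gives
\[HF(\psi_1^K,\tilde{h})\cong HF(\OP{id},\tilde{h}).\]
Composing this with $\N(K)$ yields the desired $HF(\OP{id},h)\cong HF(\OP{id},\tilde{h})$.

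I expect the main obstacle to be bookkeeping rather than new analysis: one must verify that the slope expression produced by Proposition~\ref{prop:SlopePerGen} literally agrees with the $\tilde{h}$ stated in the theorem, and one must confirm that the isotopy-invariance argument from Section~\ref{sec:InvIso} carries over to the variable-slope setting, where the key analytic input is the generalised maximum principle Lemma~\ref{thm:maxprinc} (rather than its linear special case Lemma~\ref{thm:maxprinclin}). Once those two checks are in place, the proof reduces to the concatenation of the naturality isomorphism with the isotopy-invariance isomorphism, exactly as in the proof of Theorem~\ref{thm:SlopePeriodicity}.
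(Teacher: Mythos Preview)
Your proposal is correct and follows essentially the same approach as the paper's proof, which simply cites Proposition~\ref{prop:SlopePerGen}, Lemma~\ref{thm:ImConnTimeOne}, and the invariance under compactly supported symplectic isotopies; you have spelled out how these three ingredients combine, in direct analogy with the proof of Theorem~\ref{thm:SlopePeriodicity}.
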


\subsection{Questions on translated points}
Here we use our theory to study a conjecture due to Sandon. Most of the results in the section can be also proved by other methods and are probably known to experts \cite{MR2890476,MR3079344,MR3071949,AlbersFuchsMerry,AlbersMerry,Zenaidi}.
\begin{definition}(Sandon \cite{MR2890476})
Let $\sigma:M\to M$ be a contactomorphism. The point $x\in M$ is said to be a \textbf{translated point} of the contactomorphism $\sigma$ with respect to the contact form $\beta$ if $(\sigma^\ast \beta)_x=\beta_x$ and if $x$ and $\sigma(x)$ lie on the same Reeb orbit.
\end{definition}
\begin{conjecture}(Sandon \cite{MR3079344})
Let $\sigma :M\to M$ be a contactomorphism that is contact isotopic to the identity. Then the number of translated points is greater than or equal to the minimal number of critical points a smooth function on $M$ may have.
\end{conjecture}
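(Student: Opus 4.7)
The plan is to deploy the Floer homology $HF(\OP{id},h)$ built in the paper to count translated points in a Morse-theoretic fashion. The bridge is as follows: writing $\sigma=\sigma^h_1$ for a time-dependent contact Hamiltonian $h_t$ on $M$ and lifting to the symplectization via $H_t(x,r)=rh_t(x)$, translated points of $\sigma$ with translation time $\tau\in\mathbb{R}$ are in bijection with the $1$-periodic orbits of the Hamiltonian flow of $H_t-\tau r$ that lie on levels $\{r=\mathrm{const}\}$. Thus, after a small perturbation for non-degeneracy, each translated point contributes a generator of a Floer-type chain complex.

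Concretely, I would first assume $(M,\beta)$ bounds a Liouville domain $(W,\lambda)$ and extend $h$ to a Hamiltonian $H$ on $\hat W$ equal to $rh_t(x)$ near infinity and vanishing on $W$. For each admissible translation $\tau$, the chain complex $CF_\ast(\OP{id},H-\tau r,J,\hcl)$ from Theorem~\ref{Moduli2} would have generators in bijection with the translated points of $\sigma$ of translation $\tau$ in the class $\hcl\in\pi_0\Omega_{\OP{id}}$; summing over $\tau$ and $\hcl$ gives a Floer count of all translated points. Next, the hypothesis that $\sigma$ is contact isotopic to $\OP{id}$ means $[\sigma]\in\pi_1\OP{Cont}(M,\xi^\beta)$ lies in the kernel of the connecting homomorphism $\Delta$ of~\eqref{DeltaBGEq} (since $\Delta([\sigma])=[\OP{id}]$), so Proposition~\ref{prop:SlopePerGen} together with Theorem~\ref{thm:SlopePeriodicityGen} and Theorem~\ref{IsoInv} would yield an isomorphism $HF(\OP{id},h)\cong HF(\OP{id},\tilde h)$ with $\tilde h$ cofinally close to $0$. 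Finally, by Remark~\ref{rmk:KnownSlopes} the latter group reduces to the singular homology of $W$, whose total rank (if $W$ deformation retracts onto $M$, as for tube-like fillings) provides a lower bound by $\dim H_\ast(M;\mathbb{Z}_2)$ on the number of translated points.

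The main obstacle is twofold. Sandon's conjecture demands the \emph{minimum number of critical points} of a smooth function on $M$, an invariant controlled by Lusternik--Schnirelmann category that generically strictly exceeds the sum of Betti numbers delivered by a naive rank bound; closing this gap would require installing a product (or $A_\infty$) structure on $HF(\OP{id},h)$ and executing an LS-type argument on the chain level, in the style of Floer--Hofer or Rudyak. A second, more fundamental obstacle is that the present theory requires $M$ to bound a Liouville domain and $h$ to generate a strict contactomorphism without fixed points, whereas the conjecture is stated for arbitrary closed contact manifolds and arbitrary contactomorphisms $\sigma$ contact isotopic to the identity; a filling-free version would presumably require an intrinsic contact-homological analogue of $HF(\OP{id},h)$, built directly on $M$.
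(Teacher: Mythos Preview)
The statement you are addressing is a \emph{conjecture}; the paper does not prove it and offers no proof to compare against. What the paper does prove in this direction is the much weaker Theorem~\ref{thm:AppSan}: under the additional hypotheses that $\sigma$ is a \emph{strict} contactomorphism isotopic to the identity through strict contactomorphisms and that $M$ bounds a Liouville domain with infinite-dimensional symplectic homology, one gets infinitely many translated points. So your proposal is not a reconstruction of a proof in the paper but an attempt at an open problem.

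Beyond the two obstacles you yourself flag (the Lusternik--Schnirelmann gap and the filling hypothesis), there are concrete errors in the argument as written. First, the sentence ``$\sigma$ is contact isotopic to $\OP{id}$ means $[\sigma]\in\pi_1\OP{Cont}(M,\xi^\beta)$'' is a category error: an isotopy from $\OP{id}$ to $\sigma$ is a path, not a loop, so it does not define an element of $\pi_1$, and there is no reason for $\Delta$ of anything to be trivial. Theorem~\ref{thm:SlopePeriodicityGen} requires a genuine \emph{loop} of contactomorphisms, which you do not have. Second, even granting a filling, the generators of $CF_\ast(\OP{id},H-\tau r,J,\hcl)$ are \emph{all} $1$-periodic orbits of $H-\tau r$ on $\hat W$, including those in the interior of $W$ coming from your extension; they are not in bijection with translated points of $\sigma$. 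Third, a Liouville domain $W$ does not deformation retract onto its boundary $M$ (think of a disk), so the passage from $\dim H_\ast(W;\mathbb{Z}_2)$ to $\dim H_\ast(M;\mathbb{Z}_2)$ is unjustified. In short, the bridge you propose between $HF(\OP{id},h)$ and the Morse-theoretic count on $M$ does not stand, and the paper makes no claim that it should.
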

\begin{definition}
Let $x\in M$ be a translated point of a contactomorphism $\sigma:M\to M.$ The number $a\in \mathbb{R}$ is called a \textbf{period} of $x$ if $\sigma^\beta_a(x)=\sigma(x).$
\end{definition}
\begin{remark}
Note that a translated point may have many periods. Moreover, a period does not have to be non-negative.
\end{remark}
\begin{lemma}[Giroux]\label{thm:ContImTheta}
Let $\sigma:M\to M$ be a contactomorphism isotopic to the identity through contactomorphisms. Then $\sigma\in \OP{im}\Theta$ (see Definition~\ref{def:irm}).
\end{lemma}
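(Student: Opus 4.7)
The plan is to construct an explicit preimage of $\sigma$ under $\Theta$ by lifting the given contact isotopy to a Hamiltonian isotopy on $\hat W$ whose generator is linear in $r$ at infinity. This is essentially the same construction that appears in the proof of Lemma~\ref{thm:ImConnTimeOne}, adapted to a not-necessarily-periodic family.

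Concretely, fix a contact isotopy $\{\sigma_t\}_{t\in[0,1]}$ from $\OP{id}$ to $\sigma$, let $Y_t$ be its generating contact vector field, and set $h_t:=-\beta(Y_t)$, the associated time-dependent contact Hamiltonian on $M$ (cf.\ Remark~\ref{rmk:hY}). Pick a smooth cut-off $\chi:(0,\infty)\to[0,1]$ that vanishes on $(0,\tfrac12]$ and equals $1$ on $[1,\infty)$, and define
\[H_t(p):=\begin{cases}\chi(r)\,r\,h_t(x)&\text{if }p=(x,r)\in M\times(0,\infty),\\ 0&\text{otherwise.}\end{cases}\]
Then $H_t$ is a smooth function on $\hat W$ equal to $rh_t(x)$ outside a compact set. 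The first step is to check that the Hamiltonian flow $\psi_t^H$ is defined on all of $\hat W$ for $t\in[0,1]$: this follows because $X_{H_t}$ is homogeneous of degree $0$ in $r$ near infinity, so trajectories cannot escape to infinity in finite time.

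The second step is to compute $\Theta(\psi_1^H)$. On $M\times[1,\infty)$ the Hamiltonian is $rh_t(x)$, and a direct symplectization calculation shows that $\psi_t^H(x,r)=(\sigma_t(x),r/f_t(x))$, with $\sigma_t^\ast\beta=f_t\beta$. By Definition~\ref{def:irm} this gives $\Theta(\psi_1^H)=\sigma_1=\sigma$.

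The final step is to verify that $\psi_1^H$ actually lies in $\OP{Symp}(W,\lambda/d)$, i.e.\ that $(\psi_1^H)^\ast\lambda-\lambda$ admits a compactly supported primitive. Cartan's formula gives the standard identity
\[(\psi_t^H)^\ast\lambda-\lambda=dF_t,\qquad F_t=\int_0^t\bigl(H_s+\lambda(X_{H_s})\bigr)\circ\psi_s^H\,ds.\]
On $M\times[1,\infty)$ one checks that $\lambda(X_{H_t})=r\beta(X_{H_t})=-rh_t=-H_t$, so the integrand vanishes near infinity and $F_t$ has compact support in $\hat W$. Taking $t=1$ exhibits $\sigma$ as an element of $\OP{im}\Theta$. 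No serious obstacle is expected: the only mild bookkeeping is to confirm that the compactness conditions in the definition of $\OP{Symp}(W,\lambda/d)$ are met, which is automatic from the vanishing of the integrand at infinity and the fact that $H_t$ itself is supported outside the deep interior only on the conical region.
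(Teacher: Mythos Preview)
Your argument is correct and follows essentially the same route as the paper: both construct the Hamiltonian $H_t(x,r)=-\chi(r)\,r\,\beta(Y_t(x))$ on $\hat W$ via a radial cut-off and observe that its time-$1$ map is an exact symplectomorphism with $\Theta(\psi_1^H)=\sigma$. You simply fill in more of the verification (completeness of the flow, the Cartan-formula check that the primitive $F_t$ is compactly supported) than the paper does; note only that the group you land in is $\OP{Symp}(\hat W,\lambda/d)$ of Definition~\ref{def:exactsymp}, which is what Definition~\ref{def:irm} requires, rather than the smaller $\OP{Symp}(W,\lambda/d)$ you name in your final step.
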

\begin{proof}
Let $\sigma_t:M\to M,\:t\in [0,1]$ be a family of contactomorphisms such that $\sigma_0=\OP{id}$ and $\sigma_1=\sigma.$ The contact isotopy $\sigma_t$ gives rise to a Hamiltonian isotopy $M\times(0,\infty)\to M\times(0,\infty)$ of the symplectization. The Hamiltonian is given by 
\[(x,r)\mapsto -r\beta(Y_t(x)),\] 
where $Y_t$ is the vector field of $\sigma_t.$ Let $\chi :(0,\infty)\to \mathbb{R}$ be a smooth function equal 0 near 0 and 1 on $[1,\infty).$ Consider the Hamiltonian $H_t:\hat{W}\to \mathbb{R}$ defined by
\[H_t(x,r):=-\chi(r)r\beta(Y_t(x))\]
for $(x,r)\in M\times(0,\infty)$ and $H_t\equiv 0$ elsewhere. The Hamiltonian isotopy $\psi^H_t:\hat{W}\to \hat{W}$ consists of exact symplectomorphisms such that $\Theta(\psi_t^H)=\sigma_t.$ In particular, $\Theta(\psi_1^H)=\sigma,$ and the lemma is proved.
\end{proof}

The following theorem follows from the results in \cite{MR3071949}.
\begin{theorem}\label{thm:AppSan}
Let $\sigma:M\to M$ be a strict contactomorphism that is isotopic to the identity through strict contactomorphisms. If 
\begin{equation}\label{eq:dimSHInf}
\dim SH(W;\mathbb{Z}_2)=\infty,
\end{equation}
then there are infinitely many translated points of $\sigma.$
\end{theorem}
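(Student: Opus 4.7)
The plan is to define, via the generalized-slope construction of Section~4, a one-parameter family of Floer homology groups $HF(\OP{id}, h^a)$ whose generators at infinity detect translated points of $\sigma$, and whose direct limit recovers $SH(W; \mathbb{Z}_2)$; a dimension bound under the assumption of finitely many translated points then contradicts the hypothesis $\dim SH(W; \mathbb{Z}_2) = \infty$.

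First, I would fix a strict isotopy $\{\sigma_t\}_{t \in [0,1]}$ from $\OP{id}$ to $\sigma$, with generator $Y_t$ and strict contact Hamiltonian $k_t = -\beta(Y_t)$ (so $dk_t(R^\beta) = 0$). For each $a \in \mathbb{R}$, I would build a 1-periodic contact Hamiltonian $h^a_t$ by smoothly concatenating a reparametrization of $k_t$ on $[0, 1/2]$ with the Reeb flow at rate $-2a$ on $[1/2, 1]$, arranged so that $\sigma^{h^a}_1 = \sigma \circ \sigma^\beta_{-a}$. The fixed points of $\sigma^{h^a}_1$ are then in bijection with translated points of $\sigma$ of period $a$, and the admissibility of $h^a$ in the sense of Section~4 is equivalent to $a$ not being such a period.

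Next, assume for contradiction that $\sigma$ has only finitely many translated points. Then the set $\mathcal{P} \subset \mathbb{R}$ of their periods is locally finite, since each translated point contributes either a single period or (in the periodic-Reeb case) an arithmetic progression. For $a \notin \mathcal{P}$ the group $HF(\OP{id}, h^a)$ is well-defined and locally constant in $a$ by continuation; the jump of $\dim HF(\OP{id}, h^a)$ across any $c \in \mathcal{P}$ is bounded by the number of translated points of period $c$, hence by the total number of translated points $N$. A naturality argument in the spirit of Proposition~\ref{prop:SlopePerGen}, applied to a Hamiltonian $K$ lifting the strict isotopy $\sigma_t$ (with $K_t(x, r) = -r\beta(Y_t(x))$ near infinity), identifies the directed system $\{HF(\OP{id}, h^a)\}$ with the standard directed system $\{HF(\OP{id}, a)\}$, whose direct limit is $SH(W; \mathbb{Z}_2)$.

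Finally, combining these observations yields a uniform upper bound on $\dim HF(\OP{id}, h^a)$ in terms of $N$ and the density of translated periods; passage to the limit $a \to \infty$ then forces $\dim SH(W; \mathbb{Z}_2) < \infty$, contradicting the hypothesis. The hardest step is the naturality identification with $SH(W; \mathbb{Z}_2)$: the Hamiltonian $K$ attached to the strict isotopy is not compactly supported, and $\psi^K_1 \notin \OP{Symp}_c(W, \lambda/d)$, so Proposition~\ref{prop:SlopePerGen} does not apply directly. One must circumvent this either by a direct chain-level bijection of generators and moduli spaces (in the spirit of Lemma~\ref{thm:Nat}), or by splitting the Hamiltonian isotopy into a compactly supported piece and a piece controlled by the max-principle of Lemma~\ref{thm:maxprinc}.
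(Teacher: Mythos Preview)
Your proposal misses the single observation that makes the paper's proof short: a \emph{strict} contactomorphism commutes with the Reeb flow, so if $\sigma$ has even one translated point $x$, then every point on the Reeb orbit through $x$ is again a translated point. Thus the alternative to ``infinitely many translated points'' is not ``finitely many'' but ``none at all''. Once you assume $\sigma$ has \emph{no} translated points, the slope $h_t+a$ is admissible for every $a$, the continuation maps $HF(\OP{id},h+a)\to HF(\OP{id},h+a')$ are isomorphisms for all large $a$ (same argument as Lemma~\ref{thm:VarSl}), and the direct limit is just $HF(\OP{id},h+a_0)$ for one fixed $a_0$, hence finite-dimensional.

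Because you work under the weaker hypothesis ``finitely many translated points'', your bounded-jump argument does not close. With $N$ translated points, the set of periods $\mathcal P$ can contain arithmetic progressions, so the number of crossings in $[0,a]$ grows linearly in $a$; a jump bound of $N$ at each crossing yields only $\dim HF(\OP{id},h^a)=O(a)$, not a uniform bound, and the direct limit can still be infinite-dimensional. (There is also a Morse--Bott issue lurking: each translated point comes with a whole Reeb orbit of them, so the ``jump by at most $N$'' estimate is not obviously correct either.)

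Finally, the naturality identification you flag as the hardest step is unnecessary. The paper never invokes Proposition~\ref{prop:SlopePerGen} here; it uses the elementary sandwich argument that for every constant slope $a'$ there exist $a,a''$ with $a'\leqslant h_t(x)+a\leqslant a''$, so the directed systems $\{HF(\OP{id},h+a)\}_a$ and $\{HF(\OP{id},a)\}_a$ are cofinal in each other and have the same direct limit, namely $SH(W;\mathbb Z_2)$. This avoids entirely the problem that $\psi^K_1\notin\OP{Symp}_c(W,\lambda/d)$.
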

\begin{proof}
A strict contactomorphism commutes with the Reeb flow. Hence, if $\sigma$ has a translated point, then it has infinitely many of them (all point on a Reeb orbit containing a translated point are translated points).

Without loss of generality, we may assume $\sigma=\sigma_1,$ where $\sigma_t:M\to M$ is a family of strict contactomorphisms with $\sigma_0=\OP{id}$ that is generated by 1-periodic vector field $Y_t$ on $M.$ Let $H_t:\hat{W}\to\mathbb{R}$ be a Hamiltonian such that $\Theta(\psi^H_t)=\sigma_t.$ Its existence follows from the proof of Lemma~\ref{thm:ContImTheta}. Moreover, $H_t$ is equal to $rh_t$ near infinity, where \[h_t:M\to\mathbb{R}:x\mapsto -\beta(Y_t(x)).\]
The Hamiltonian isotopy  of the Hamiltonian 
\[M\times (0,\infty)\ni (x,r)\mapsto r(h_t(x)+a)\]
is equal to $(x,r,t)\mapsto (\sigma_t\circ\sigma^\beta_{-at}(x),r).$ Therefore, 1-periodic orbits of this Hamiltonian correspond to translated points of $\sigma$ that have period $a.$ Assume that $\sigma$ has no translated points. Then, similarly as in the proof of Lemma~\ref{thm:VarSl}, one can prove that the continuation map
\[HF_\ast(\OP{id},h+a)\to HF_\ast(\OP{id},h+a'),\quad, a\leqslant a'\]
is an isomorphism for $a$ large enough. Hence
\[\underset{a\in\mathbb{R}}{\lim\limits_{\longrightarrow}}HF(\OP{id},h+a)\cong HF(\OP{id},h+a_0)\]
for $a_0\in\mathbb{R}$ large enough. In particular,
\[\dim \underset{a\in\mathbb{R}}{\lim\limits_{\longrightarrow}}HF(\OP{id},h+a)< \infty. \]
On the other hand, for each $a'\in\mathbb{R},$ there exist $a,a''\in\mathbb{R}$ such that $a'\leqslant h(x)+a\leqslant a'',$ for all $x\in M.$ Therefore  
\[\underset{a\in\mathbb{R}}{\lim\limits_{\longrightarrow}}HF(\OP{id},h+a)\cong \underset{a\in\mathbb{R}}{\lim\limits_{\longrightarrow}}HF(\OP{id},a)=: SH(W;\mathbb{Z}_2),\]
and the proof is finished.
\end{proof}

\printbibliography 
\end{document}